\documentclass[a4paper, pdftex]{amsart}
%% remove MRnumber
\AtBeginDocument{%
  \def\MR#1{}
}

\usepackage{pdflscape}

\usepackage{tikz, tikz-cd}
\usepackage{amsmath,amssymb}
\usepackage{amsthm}
\usepackage{mathrsfs}
\usepackage[shortlabels]{enumitem}
\usepackage{mathtools}
\usepackage[all,cmtip,2cell]{xy}
\usepackage{array}
\usepackage{comment}
\usepackage{braket}
\usepackage{stmaryrd}
\usepackage{url}
\usepackage{ascmac}
\usepackage{multirow}
\usepackage{hhline}
\usepackage{wasysym}
\usepackage{graphicx}
\usetikzlibrary{calc}

\usepackage[top=30truemm,bottom=30truemm,left=35truemm,right=35truemm]{geometry}

%marginpars coded nicely
\usepackage{setspace}
\setlength{\marginparwidth}{1in}
\newcommand{\marginparstretch}{0.6}
\let\oldmarginpar\marginpar
\renewcommand\marginpar[1]{\-\oldmarginpar[\framebox{\setstretch{\marginparstretch}\begin{minipage}{\marginparwidth}{\raggedleft\tiny #1}\end{minipage}}]{\framebox{\setstretch{\marginparstretch}\begin{minipage}{\marginparwidth}{\raggedright\tiny #1}\end{minipage}}}}

\usepackage[top=30truemm,bottom=30truemm,left=35truemm,right=35truemm]{geometry}

\usepackage{fixme}
\fxsetup{status=draft}

\theoremstyle{plain}
\newtheorem{thm}{Theorem}[section]
\newtheorem{prop}[thm]{Proposition}
\newtheorem{lem}[thm]{Lemma}

\newtheorem{cor}[thm]{Corollary}
\newtheorem*{thm*}{Theorem}

\theoremstyle{definition}
\newtheorem{defi}[thm]{Definition}
\newtheorem{conj}[thm]{Conjecture}
\newtheorem{setup}[thm]{Setup}
\newtheorem*{NaC}{Notation and Convention}

\theoremstyle{remark}
\newtheorem{rem}[thm]{Remark}

\newtheorem{ex}[thm]{Example}

\numberwithin{equation}{section}

\newcommand{\Z}{\mathbb{Z}}
\newcommand{\Gm}{\mathbb{G}_{\mathrm{m}}}
\newcommand{\Q}{\mathbb{Q}}
\newcommand{\R}{\mathbb{R}}
\newcommand{\C}{\mathbb{C}}
\newcommand{\A}{\mathbb{A}}
\renewcommand{\P}{\mathbb{P}}

\newcommand{\V}{\mathbb{V}}

\DeclareMathOperator{\id}{id}

\DeclareMathOperator{\Sym}{\mathrm{Sym}}

\DeclareMathOperator{\Spec}{\mathrm{Spec}}

\DeclareMathOperator{\Proj}{\mathrm{Proj}}

\DeclareMathOperator{\OGr}{\mathrm{OGr}}

\DeclareMathOperator{\Spin}{\mathrm{Spin}}

\DeclareMathOperator{\Bl}{\mathrm{Bl}}

\DeclareMathOperator{\codim}{\mathrm{codim}}

\DeclareMathOperator{\Hom}{Hom}
\DeclareMathOperator{\End}{End}

\DeclareMathOperator{\Ext}{Ext}

\DeclareMathOperator{\refl}{ref}

\DeclareMathOperator{\Tot}{Tot}

\newcommand{\mcE}{\mathcal{E}}
\newcommand{\mcF}{\mathcal{F}}
\newcommand{\mcG}{\mathcal{G}}

\newcommand{\mcK}{\mathcal{K}}
\newcommand{\mcO}{\mathcal{O}}

\newcommand{\mcP}{\mathcal{P}}

\newcommand{\mcS}{\mathcal{S}}
\newcommand{\mcT}{\mathcal{T}}
\newcommand{\mcU}{\mathcal{U}}
\newcommand{\mcV}{\mathcal{V}}
\newcommand{\mcX}{\mathcal{X}}

\newcommand{\mcZ}{\mathcal{Z}}

\newcommand{\wX}{\widetilde{X}}

\DeclareMathOperator{\coh}{coh}
\DeclareMathOperator{\Qcoh}{Qcoh}
\DeclareMathOperator{\Dcoh}{Dcoh}
\DeclareMathOperator{\Dmod}{Dmod}
\DeclareMathOperator{\modu}{mod}

\DeclareMathOperator{\OG}{\mathrm{OG}}

\DeclareMathOperator{\RG}{\mathrm{R}\Gamma}

\DeclareMathOperator{\RHom}{\mathrm{RHom}}
\newcommand{\RR}{\mathbf{R}}
\newcommand{\LL}{\mathbf{L}}

\DeclareMathOperator{\Db}{\mathrm{D}^{\mathrm{b}}}
\DeclareMathOperator{\D}{\mathrm{D}}
\DeclareMathOperator{\Perf}{Perf}

\DeclareMathOperator{\GL}{\mathrm{GL}}

\renewcommand{\mod}{\mathrm{mod}}

\mathchardef\mhyphen="2D

\title[D-equivalence for the simple flop of type $D_4$]{Derived equivalence for the simple flop of type $D_4$ via tilting bundles}
\author[W.HARA]{Wahei Hara}

\address[W.Hara]{Kavli Institute for the Physics and Mathematics of the Universe (WPI), University of Tokyo, 5-1-5 Kashiwanoha, Kashiwa, 277-8583, Japan.}
\email{wahei.hara@ipmu.jp}

\date{\today}

\subjclass[2020]{14F08, 14E05, 14E16.}

\keywords{Derived categories, Flops, Tilting bundles, K3 surfaces}

\begin{document}
\maketitle

\begin{abstract}
The aim of this article is to discuss the derived equivalence problem for a local model of the simple flop of type $D_4$, which was found by Kanemitsu \cite{Kan22}.
First, tilting bundles on both sides of the flop are constructed, 
and then those tilting bundles are applied to prove the derived equivalence.
This derived equivalence for the flop deduces derived equivalences between general K3 surfaces of degree $12$.
%The method in this article also shows the existence of noncommutative crepant resolutions of the singularity that are derived equivalent to both sides of the flop.
The study of this example of a flop is very similar to the author's previous work \cite{Hara24} for the simple of flop of type $G_2^{\dagger}$,
but the construction and the analysis of tilting bundles become harder.
\end{abstract}

%\tableofcontents

\section{Introduction and main results}

\subsection{Background}

Minimal model theory is a central topic in algebraic geometry,  
providing a procedure to obtain a better model of a given variety in terms of the canonical class, called a minimal model.  
Within this theory, there is an important class of birational transformations known as \textit{flops}.  
A flop preserves the canonical classes of varieties, and therefore, two varieties connected by a flop are regarded as lying at the same level in the minimal model program.  
For instance, a single variety may admit several distinct minimal models, any two of which are connected by a sequence of flops \cite{Kaw08}.

Interestingly, it is conjectured that flops preserve not only the canonical classes, but also the homological information of varieties.  
More precisely, a flops is expected to induce an equivalence of derived categories. 

\begin{conj}[\cite{BO02}] \label{DK conj}
Let $X_+ \xrightarrow{f_+} Y \xleftarrow{f_-} X_-$ be a diagram of a flop between two smooth varieties $X_+$ and $X_-$.
Then $X_+$ and $X_-$ are D-equivalent, i.e.~there exists an exact equivalence $\Db(\coh X_+) \simeq \Db(\coh X_-)$ of triangulated categories.
\end{conj}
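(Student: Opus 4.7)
The plan is to attack Conjecture~\ref{DK conj} by a two-stage reduction: first, localize the problem along the flopping locus so that only germs of flopping contractions need to be treated; second, handle each germ by producing an explicit Fourier--Mukai kernel. For the first stage, derived equivalence of $X_+$ and $X_-$ reduces (via glueing techniques of Chen and of Van den Bergh type) to an equivalence between the formal completions $\widehat{X}_\pm$ along the exceptional locus, so it suffices to treat the local model of the flop over a germ $(Y,y)$ where $y$ is the singular image of the contraction.

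For the second stage, three systematic strategies have proved successful in known cases and would be the natural starting points. The first is to take the structure sheaf $\mathcal{O}_{X_+ \times_Y X_-}$ of the fiber product (possibly twisted by a line bundle) as a Fourier--Mukai kernel, as Bondal--Orlov and Bridgeland did for threefold flops. Verifying full faithfulness then amounts to intersection-theoretic computations on the exceptional fibers, after which essential surjectivity follows by Serre duality. The second strategy, pursued in this paper, is the tilting-bundle method: construct tilting bundles $T_\pm \in \Db(\coh X_\pm)$ whose endomorphism algebras are isomorphic to a fixed noncommutative crepant resolution $\Lambda$ of $\mathcal{O}_{Y,y}$, so that the tilting equivalences $\Db(\coh X_\pm) \simeq \Db(\Lambda\mhyphen\mod)$ compose to give the desired $\Db(\coh X_+) \simeq \Db(\coh X_-)$. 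The third, using Halpern-Leistner's magic windows, applies whenever the flop arises from a variation of GIT and produces the equivalence by comparing weight windows on the ambient GIT stack.

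The main obstacle, and the reason Conjecture~\ref{DK conj} remains open in full generality, is that even after the reduction to local models one faces an essentially unlimited supply of model germs: new classes of simple flops continue to be discovered, the $G_2^\dagger$ flop of Kanemitsu and the $D_4$ flop treated here being very recent examples. For each new family the construction of the tilting bundles (or of the correct Fourier--Mukai kernel) is genuinely ad hoc --- one must guess the indecomposable summands from the birational geometry of the exceptional locus, prove vanishing of higher $\mathrm{Ext}$'s, and then verify that the endomorphism algebras on the two sides agree as noncommutative crepant resolutions of the base. Without a classification of local models of higher-dimensional flopping contractions, a uniform proof is out of reach, and the conjecture is likely to be settled, if at all, one family of local models at a time, as this paper does for the $D_4$ case.
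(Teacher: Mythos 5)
The statement you were asked about is Conjecture~\ref{DK conj}, and the paper does not prove it --- nor does anyone: it is stated explicitly as an open conjecture, known only in dimension three (Bridgeland) and in scattered higher-dimensional examples. The paper's actual contribution is Theorem~\ref{main thm}, which verifies the conjecture for the single canonical local model of the simple flop of type $D_4$ via tilting bundles. Your write-up correctly recognises this and does not claim a proof; as a survey of the available strategies (fiber-product kernels, tilting bundles/NCCRs, window equivalences) it is accurate and matches the discussion in the paper's introduction and appendix. So there is no ``wrong step'' to point to, but there is also nothing to certify as a proof.

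One substantive overstatement to flag: your first-stage reduction, asserting that D-equivalence of $X_+$ and $X_-$ ``reduces (via glueing techniques\dots) to an equivalence between the formal completions along the exceptional locus,'' is not known in dimension $\geq 4$. Gluing local Fourier--Mukai equivalences into a global one is itself a serious open problem; Chen's gluing results apply to threefolds, and even in the paper at hand the tilting-bundle argument works only for the canonical and complete local models, not for an arbitrary variety containing such a flop (that more general statement, for type $D_4$, is due to Xie by entirely different methods). If your proposal were read as an actual proof strategy, this unproved reduction would be the first genuine gap, before one even reaches the (admittedly ad hoc, family-by-family) construction of kernels or tilting bundles that you correctly identify as the second obstacle.
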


This conjecture motivates the study of derived categories in algebraic geometry and reveals deep connections among birational geometry, representation theory, and mathematical physics.  
The statement of the conjecture should be extended to varieties with terminal singularities,
but the derived category of a singular variety is a subtle object,
and no good formulation of the conjecture for singular varieties is known at this moment.

The above conjecture has been proven true in dimension three by Bridgeland.  
However, it remains widely open in dimensions greater than or equal to four.  
One of the difficulties in proving the conjecture in higher dimensions lies in the complexity of the geometry of higher dimensional flops.  
Additionally, no classification of smooth flops is known in higher dimensions.

From this reason, it is natural to find a reasonable class of flops and study those flops.
One such class is given by \textit{simple flops}, defined by Kanemitsu \cite{Kan22}.
A simple flop is a flop that connects two varieties through a single smooth blow-up and a single smooth blow-down. 
It serves as a natural generalization of the Atiyah flop, the most fundamental example of a flop. 
Simple flops are important not only for their simplicity and the wealth of examples they provide in birational geometry,
but also for their connections with Mukai pairs and associated projective Calabi-Yau manifolds.

The aim of this article is to discuss the derived equivalence for the canonical local model of the simple flop of type $D_4$.

\subsection{The simple flop of type $D_4$ and the main result} \label{sect: flop}
This section explains the geometry of the simple flop of type $D_4$ and then states the main result.
Consider the orthogonal Grassmannians $\OGr(3,8)$ and $\OGr(4,8)$.
It is known that $\OGr(4,8)$ has two connected components, which are denoted by $\OGr_+(4,8)$ and $\OGr_-(4,8)$,
and both are isomorphic to the six dimensional smooth quadric hypersurface $\Q^6 \subset \P^7$.
There are natural projections $p_{\pm} \colon \OGr(3,8) \to \OGr_{\pm}(4,8)$.
Let $\mcS_{\pm}$ be the restriction of rank $4$ universal subbundle to $\OGr_{\pm}(4,8)$.
Under an identification $\OGr_{\pm}(4,8) \simeq \Q^6$, $\mcS_{\pm}$ is isomorphic to a spinor bundle over $\Q^6$.
In addition, the projections $p_{\pm}$ give identifications
\[ 
\begin{tikzcd}
  \P_{\OGr_+(4,8)}(\mcS_+(2)) \arrow[d, "p_+"'] \arrow[r, equal] & \OGr(3,8) \arrow[r, equal] & \P_{\OGr_-(4,8)}(\mcS_-(2)) \arrow[d, "p_-"]  \\
 \OGr_+(4,8) && \OGr_-(4,8).
\end{tikzcd} \]
This diagram is called the roof of type $D_4$, since they are all rational homogeneous manifolds of Dynkin type $D_4$ \cite{Kan22}.
Associated to this roof, there is a flop
\[ \begin{tikzcd}
X_+ \coloneqq \Tot_{\OG_+(4,8)}(\mcS_+^{\vee}(-2)) \arrow[rr, dashrightarrow] \arrow[rd, "f_+"'] & & X_- \coloneqq \Tot_{\OG_-(4,8)}(\mcS_-^{\vee}(-2)) \arrow[dl, "f_-"] \\
& \Spec R, & 
\end{tikzcd} \]
which will be called the canonical local model of the simple flop of type $D_4$ (Definition~\ref{def model flop}).
The following is the main result of this article.

\begin{thm} \label{main thm}
There exists an exact equivalence of $R$-linear triangulated categories 
\[ \Phi \colon \Db(\coh X_+) \xrightarrow{\sim} \Db(\coh X_-) \]
such that $R(f_+)_* \simeq  R(f_-)_* \circ \Phi$.
\end{thm}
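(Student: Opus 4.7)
The plan is to prove Theorem~\ref{main thm} via the tilting bundle method, following the framework of the author's previous work \cite{Hara24} on the $G_2^\dagger$ simple flop. The first step is to construct vector bundles $T_\pm$ on $X_\pm$ whose indecomposable summands are obtained by pulling back natural bundles from the base $\OGr_\pm(4,8) \simeq \Q^6$ along the affine projection $\pi_\pm \colon X_\pm \to \OGr_\pm(4,8)$: candidates include $\mcO$, line bundle twists $\mcO(k)$, the spinor bundles $\mcS_\pm$, and other homogeneous bundles on the quadric. The list of summands will be constrained by the requirement that $T_+$ and $T_-$ have isomorphic derived pushforwards to $\Spec R$, so that both realize a common noncommutative crepant resolution of $\Spec R$.

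The second step is to verify that $T_\pm$ is tilting, i.e.\ that $\Ext^i_{X_\pm}(T_\pm, T_\pm) = 0$ for $i > 0$ and that $T_\pm$ classically generates $\Db(\coh X_\pm)$. Since $\pi_\pm$ is affine with $(\pi_\pm)_* \mcO_{X_\pm} = \Sym^\bullet(\mcS_\pm(2))$, the Ext computations reduce by the projection formula to cohomology vanishing on $\Q^6$ for bundles built from tensor and symmetric powers of the spinor bundles together with line bundle twists; these can in principle be handled via Borel--Weil--Bott on $\OGr_\pm(4,8)$ and the known decomposition rules for spinor bundles on the even-dimensional quadric. Classical generation of $\Db(\coh X_\pm)$ follows from generation of $\Db(\coh \Q^6)$ by the restriction of $T_\pm$ to the zero section, combined with affineness of $\pi_\pm$.

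The third step is to identify the endomorphism algebras. Setting $A_\pm := \End_{X_\pm}(T_\pm)$, tilting yields $R$-linear equivalences $\RHom(T_\pm, -) \colon \Db(\coh X_\pm) \xrightarrow{\sim} \Db(\mod A_\pm)$. If the summands of $T_+$ and $T_-$ are chosen so that their derived pushforwards to $\Spec R$ are identified, then $A_+ \simeq A_-$ as $R$-algebras, and the composition of the two equivalences yields the desired $\Phi$. The compatibility $R(f_+)_* \simeq R(f_-)_* \circ \Phi$ then holds because, provided $\mcO_{X_\pm}$ appears as a summand of $T_\pm$, the equivalence $\RHom(T_\pm, -)$ intertwines $R(f_\pm)_*$ with the forgetful functor $\Db(\mod A_\pm) \to \Db(\mod R)$ induced by the idempotent corresponding to the $\mcO_{X_\pm}$-summand.

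The main obstacle is the second step. The spinor bundles on $\Q^6$ have rank $4$, so their symmetric and tensor powers decompose into many irreducibles with delicate Bott vanishing patterns, and the further constraint of producing isomorphic endomorphism algebras on both sides of the flop severely restricts the allowable choice of summands. Compared with the $G_2^\dagger$ case treated in \cite{Hara24}, the computations are substantially heavier, and careful bookkeeping will be required to simultaneously ensure vanishing of all higher Exts, classical generation, and agreement of $A_+$ with $A_-$ as $R$-algebras.
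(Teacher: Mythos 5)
Your overall strategy (construct tilting bundles, show the endomorphism algebras agree as $R$-algebras, deduce the equivalence, and get the compatibility with $R(f_\pm)_*$ from $\mcO_{X_\pm}$ being a summand) matches the paper. But your first step contains a gap that would derail the construction: you propose to build $T_\pm$ entirely from pullbacks of homogeneous bundles on $\OGr_\pm(4,8) \simeq \Q^6$ along $\pi_\pm$. This cannot work. The projection-formula computation you invoke shows $\Ext^1_{X_\pm}(\mcS_{X_\pm}^\vee, \mcO_{X_\pm}(-2)) \simeq H^1(\Q^6, \mcS \otimes \mcS) \simeq \C$, so any collection of pullbacks rich enough to generate (and such collections must contain both spinor twists and a range of line bundles) will have a nonzero higher $\Ext$ and fail to be pretilting. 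The paper's essential new ingredient is the bundle $\mcP_\pm$ defined by the unique non-split extension $0 \to \mcO_{X_\pm}(-2) \to \mcP_\pm \to \mcS_{X_\pm}^\vee \to 0$. This is not the pullback of anything on $\Q^6$ — it is a genuinely $X_\pm$-level object — and replacing the offending spinor summands by $\mcP_\pm$ (and $\mcP_\pm^\vee$ twists) is precisely what kills the bad $\Ext^1$ while preserving generation, via the "semiuniversal extension" lemma. Without this idea, the Borel--Bott--Weil bookkeeping you mention will simply confirm that a pure-pullback tilting bundle does not exist.

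A second point you underestimate is the matching of $A_+$ and $A_-$. It is not enough to choose summand lists that look symmetric: one needs a geometric mechanism identifying the pushforwards to $\Spec R$. The paper achieves this by realizing all the summands — including the extension bundles $\mcP_\pm$ — as derived pushforwards along $\phi_\pm \colon \wX \to X_\pm$ of twists of a single "key bundle" $\mcE_{\wX}$ pulled up from the roof $W = \OGr(3,8)$ (Proposition~\ref{prop exchange}), and then uses that $j_\pm^* \colon \refl(X_\pm) \to \refl(U)$ and $(f_\pm)_* \colon \refl(X_\pm) \to \refl(R)$ are equivalences on reflexive sheaves. This requires constructing \emph{two} tilting bundles on each side ($\mcT_\pm^\sharp$ and $\mcT_\pm^\flat$), corresponding to two different full exceptional collections on $\Q^6$, in such a way that the $\sharp$-bundle on $X_+$ pushes down to the same $R$-module as the $\sharp$-bundle on $X_-$ (and similarly for $\flat$). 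A single ad hoc choice of summands on each side will not produce this identification. Your final step (the compatibility $R(f_+)_* \simeq R(f_-)_* \circ \Phi$ from the $\mcO$-summand) is correct as stated and matches the paper.
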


More precisely, it is shown in this article that the local Calabi-Yau $10$-fold $X_+$ (resp.~$X_-$) admit a \textit{tilting bundle} $\mcT_+$ (resp.~$\mcT_-$) that contains $\mcO_{X_{+}}$ (resp.~$\mcO_{X_-}$) as a direct summand,
and they satisfy $\End_{X_+}(\mcT_+) \simeq \End_{X_-}(\mcT_-)$.
Then the basic property of tilting bundles immediately implies Theorem~\ref{main thm}. 

\begin{rem}
Note that the existence of an equivalence $\Db(\coh X_+) \xrightarrow{\sim} \Db(\coh X_-)$
of $R$-linear triangulated categories was first proved by Xie \cite{Xie24} for an arbitrary model $X_+ \dashrightarrow X_-$ of the simple flop of type $D_4$.
His proof also extends to relative situations.
On the other hand, although our proof works only for the canonical local model (and the complete local model),
it has two advantages.
One of them is that our equivalence $\Phi$ satisfies $\Phi \circ R(f_-)_* \simeq R(f_+)_*$,
which is remarkable in the following sense. 
By definition of the flop, there is no isomorphism $\varphi \colon X_+ \xrightarrow{\sim} X_-$ such that $\varphi \circ f_+ \simeq f_-$, but the above functor isomorphism shows that such an identification exists for derived categories.
The other one is that our proof uses tilting bundles, 
and hence it also shows the existence of a noncommutative crepant resolution (c.f.~\cite{VdB04, VdB23})
that is derived equivalent to both crepant resolutions $X_{\pm}$ of $\Spec R$.
This aspect is important from the perspective of the generalised McKay correspondence (see \cite[Section~1.3.2]{Hara24}).
\end{rem}

\subsection{Application to K3 surfaces of degree $12$}
Although Theorem~\ref{main thm} conforms Conjecture~\ref{DK conj} for a single example of a flop only,
the constructed derived equivalence $\Phi$ is strong enough to deduce the result on the derived categories of general K3 surfaces of degree $12$ in a uniform way.

\begin{cor} \label{main cor}
Let $s_+ \in H^0(\mcS_+(2))$ be a section, and $s_- \in H^0(\mcS_-(2))$ the corresponding section.
If $s_+$ and $s_-$ are regular, then the equivalence in Theorem~\ref{main thm} deduces an equivalence of derived categories
\[ \Db(\coh V(s_+)) \simeq \Db(\coh V(s_-)) \]
of (possibly singular) K3 surfaces.
\end{cor}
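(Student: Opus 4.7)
The plan is to apply Isik's Koszul duality equivalence on both sides of the flop and to transport the resulting equivalence between singularity categories using the $R$-linearity and $\Gm$-equivariance of the functor $\Phi$ from Theorem~\ref{main thm}.

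First, a regular section $s_\pm \in H^0(Q^6, \mcS_\pm(2))$ determines a linear function $W_\pm \in H^0(X_\pm, \mcO_{X_\pm}) = R$ by pairing with the tautological cosection of $\mcS_\pm^{\vee}(-2)$. Write $Z_\pm := W_\pm^{-1}(0) \subset X_\pm$, and equip $X_\pm$ with the $\Gm$-action scaling the fibers of $X_\pm \to Q^6$. Isik's theorem then produces an equivalence
\[ \Db(\coh V(s_\pm)) \simeq \Db_{\mathrm{sg}}^{\Gm}(Z_\pm). \]
Under the grading on $R$ induced by the $\Gm$-action, the degree-$1$ part is canonically $H^0(Q^6, \mcS_\pm(2))$, so the phrase ``$s_-$ is the corresponding section'' is interpreted as the statement that $W_+$ and $W_-$ coincide as elements of $R$; denote this common element by $W$.

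Next, the equivalence $\Phi$ is $R$-linear by Theorem~\ref{main thm}, and it can be arranged to be $\Gm$-equivariant: the tilting bundles $\mcT_\pm$ carry natural $\Gm$-equivariant structures (coming from the $\Gm$-action on $X_\pm$, which descends to $\Spec R$), and the algebra isomorphism $\End_{X_+}(\mcT_+) \simeq \End_{X_-}(\mcT_-)$ respects the induced grading. Consequently, $\Phi$ intertwines multiplication by $W$ on the two sides, and therefore descends to a $\Gm$-equivariant equivalence $\Db^{\Gm}(\coh Z_+) \simeq \Db^{\Gm}(\coh Z_-)$. Passing to the Verdier quotient by $\Gm$-equivariant perfect complexes yields $\Db_{\mathrm{sg}}^{\Gm}(Z_+) \simeq \Db_{\mathrm{sg}}^{\Gm}(Z_-)$, and composing with the two instances of Isik's equivalence above produces the desired equivalence $\Db(\coh V(s_+)) \simeq \Db(\coh V(s_-))$.

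The main obstacle is the careful management of the $\Gm$-equivariant and perfect structures throughout the argument --- specifically, verifying that the tilting bundles $\mcT_\pm$ and the isomorphism of their endomorphism algebras lift compatibly to the $\Gm$-equivariant graded setting, and that the descent of $\Phi$ to the hypersurfaces $Z_\pm$ preserves perfect complexes so as to induce a well-defined functor on singularity categories. A secondary but essentially routine task is to confirm that $V(s_\pm)$ are K3 surfaces: adjunction combined with the Calabi-Yau property of $X_\pm$ yields $\omega_{V(s_\pm)} \simeq \mcO$, and a Koszul-resolution cohomology computation produces the vanishing $h^{0,1}(V(s_\pm)) = 0$.
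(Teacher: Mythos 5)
Your overall strategy matches the paper's: identify $\Db(\coh V(s_\pm))$ with a graded factorization/singularity category attached to the LG model $(X_\pm, W)$ via Isik--Shipman--Hirano, then transport the equivalence using $\Gm$-equivariant tilting data. The paper realizes exactly this plan in Section~5, and your observation that the first step reduces the problem to producing $\Gm$-equivariant tilting bundles with agreeing restrictions over the common open subset $U$ (done in the paper via Proposition~\ref{roof gives tilting}, by pulling back bundles from the roof $\OG(3,8)$) is correct.

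However, the middle step contains a genuine gap. You assert that because $\Phi$ intertwines multiplication by $W$, it ``therefore descends to a $\Gm$-equivariant equivalence $\Db^{\Gm}(\coh Z_+) \simeq \Db^{\Gm}(\coh Z_-)$,'' from which you pass to singularity categories by Verdier quotient. This inference does not hold: an $R$-linear equivalence of $\Db(\coh X_+)$ and $\Db(\coh X_-)$ compatible with multiplication by $W \in R$ does not induce an equivalence between the derived categories of the hypersurface fibers $Z_\pm = W^{-1}(0)$. Base change along $\Spec(R/W) \hookrightarrow \Spec R$ is not a formal operation on triangulated categories, and the tilting bundle $\mcT_\pm$ does not restrict to a tilting bundle on $Z_\pm$ (the $Z_\pm$ may be singular, so one would not even expect one). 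The paper avoids this issue entirely: it uses Hirano's Morita theorem for gauged LG models (Theorem~\ref{hirano2}), which provides directly an equivalence $\Dcoh_{\Gm}(X_\pm, \chi_{\id}, Q_\pm) \simeq \Dmod_{\Gm}(\Lambda_\pm, \chi_{\id}, Q)$ between the factorization category of $(X_\pm, Q_\pm)$ and the module factorization category of the noncommutative LG model $(\Lambda_\pm, Q)$ --- never passing through $\Db^{\Gm}(\coh Z_\pm)$. This is a substantive result and not routine; your acknowledgment that ``careful management of $\Gm$-equivariant and perfect structures'' is needed correctly identifies the pressure point, but the resolution requires Hirano's theorem rather than a naive descent of $\Phi$. (Your remark that one must also check $V(s_\pm)$ are K3 surfaces is a side matter not addressed in the paper's proof of the corollary.)
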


Such a class of K3 surfaces has already been studied in many literatures including \cite{Mukai88, Kuz06, IMOU20, KR22} from various perspectives,
the result in this article adds one more aspect from birational geometry and derived category.
See Remark~\ref{remark for K3} for further details.

\subsection{Comparison with previous works and connections}
Some of key geometric facts used in this article to construct tilting bundles have already appeared in the author's previous work \cite{Hara24}, which studies the simple flop of type $G_2^{\dagger}$.
This is not surprising, as the geometry of the simple flop of type $D_4$ is very close to that of type $G_2^{\dagger}$.
However, the result in \cite{Hara24} does not imply the results in this article, 
and the construction of tilting bundles for the type $D_4$ flop is more involved than for type $G_2^{\dagger}$, 
as the tilting bundles here have more indecomposable direct summands.
In addition, the algebraic group that is used to study the type $G_2^{\dagger}$ flop was $\Spin(7)$, 
whose Dynkin type is $B_3$, 
but in this article the representation theory of the algebraic group $\Spin(8)$ of Dynkin type $D_4$ is required.
%This is another difference between this article and \cite{Hara24}.
Furthermore, to prove certain key vanishings of $\operatorname{Ext}$ groups, we apply several results from local cohomology theory.

Combining the result in this article with those of other works \cite{Seg16, Hara21, Hara24, DHKR25}, derived equivalences via tilting bundles have now been constructed for the smallest-rank examples in all known Dynkin types, except for $F_4$.
Appendix~\ref{appendix} summarises the existing works on simple flops and their derived categories.
See also \cite[Section~1.3.3]{Hara24}.

At the time of writing, 
it sounds difficult to generalise the results in this article to simple flops of type $D_n$ for general $n > 4$.
For example, our construction of tilting bundles for type $D_4$ flop uses full strong exceptional collections of vector bundles over $\OG_{\pm}(4,8)$.
However, to the best of the author's knowledge, 
the existence of full exceptional collection of vector bundles over $\OG_{\pm}(n,2n)$ has not been known if $n \geq 7$.

\begin{NaC}
This article works over the complex number field $\C$.
This article adopts the following notations.
\begin{enumerate}
\item[$\bullet$] $\Q^n$ : the $n$-dimensional smooth quadric hypersurface of $\P^{n+1}$.
\item[$\bullet$] $\OG(m,n)$ : the orthogonal Grassmannian of $m$-dimensional linear subspaces of $\C^n$.
\item[$\bullet$] $\P(\mcE) \coloneqq \Proj \Sym \mcE$.
\item[$\bullet$] $\Tot(\mcE) \coloneqq \V(\mcE^{\vee}) \coloneqq \Spec \Sym \mcE^{\vee}$
\item[$\bullet$] $\mcF_{\omega}$ : the irreducible homogeneous vector bundle corresponding to a weight $\omega$.
\item[$\bullet$] $\refl(X)$ : the category of reflexive sheaves over $X$.
\end{enumerate}
\end{NaC}

\subsection*{Acknowledgements}
The author would like to thank Yuki Hirano, Marco Rampazzo and Ying Xie for very helpful conversations.
This work was supported by World Premier International Research Center Initiative (WPI), MEXT, Japan, 
and by JSPS KAKENHI Grant Number JP24K22829.

\section{Preliminaries}

\subsection{Tiling bundles}

This section will summarise the definitions and basic properties of tilting bundles and noncommutative crepant resolutions.
During this section all schemes are assumed to be Noetherian, separated, and of finite Krull dimension, 
although for some definitions and propositions these properties do not give a minimal assumption for schemes.

\begin{defi}
Let $Z$ be a scheme, and $\mcT \in \Perf(Z)$ a perfect complex.
\begin{enumerate}
\item[(1)] $\mcT$ is called \textit{pretilting} if $\Ext_Z^i(\mcT, \mcT) = 0$ for all $i \neq 0$.
\item[(2)] $\mcT$ is called \textit{tilting} if $\mcT$ is pretilting and $\mcT$ generates the unbounded derived category $\D(\Qcoh Z)$ of quasi-coherent sheaves.
\end{enumerate}
\end{defi}

\begin{defi}
Let $R$ be a normal Gorenstein domain, and $M \in \refl(R)$ a reflexive $R$-module.
We say that $M$ gives a noncommutative crepant resolution (= NCCR)
if the following two conditions are satisfied.
\begin{enumerate}
\item[(1)] The algebra $\End_R(M)$ has finite global dimension, and
\item[(2)] $\End_R(M)$ is Cohen-Macaulay as an $R$-module.
\end{enumerate}
The algebra $\End_R(M)$ is called an \textit{NCCR} of $R$.
\end{defi}

A connection between two concepts is given by the following well-known proposition.

\begin{prop}
Let $Z$ be a scheme that is projective over an affine scheme $\Spec R$.
Assume that $Z$ admits a tilting bundle $\mcT$.
Then the following holds.
\begin{enumerate}
\item[\rm (1)] The functor
\[ \RHom_Z(\mcT, -) \colon \Db(\coh Z) \to \Db(\modu \End_{Z}(\mcT)) \]
is an equivalence of $R$-linear triangulated categories.
\item[\rm (2)] If the structure morphism $f \colon Z \to \Spec R$ is a crepant resolution, then
\begin{enumerate}
\item[\rm (a)] There are isomorphisms of $R$-algebras
\[ \End_Z(\mcT) \simeq \End_R(f_*\mcT) \simeq \End_R((f_*\mcT)^{\vee\vee}), \]
where $(-)^{\vee}$ denotes the dualising functor $\Hom_R(-,R)$.
\item[\rm (b)] The reflexive $R$-module $(f_*\mcT)^{\vee\vee}$ gives an NCCR $\End_Z(\mcT)$.
\item[\rm (c)] If $\mcT$ contains $\mcO$ as a direct summand, then $f_*\mcT$ is a Cohen-Macaulay $R$-module (and hence reflexive).
\end{enumerate}
\end{enumerate}
\end{prop}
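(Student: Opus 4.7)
My plan is to treat the four parts in order: (1) is the classical tilting equivalence \`a la Bondal--Rickard, and (2) is a sequence of refinements that use the crepant resolution hypothesis. For part (1), the perfect complex $\mcT$ gives an adjoint pair
\[ \RHom_Z(\mcT, -) \colon \D(\Qcoh Z) \rightleftarrows \D(\modu \End_Z(\mcT)) : (-) \otimes^{\LL}_{\End_Z(\mcT)} \mcT. \]
The pretilting hypothesis forces the counit to be an isomorphism on the free module $\End_Z(\mcT)$, hence on the thick subcategory it generates; dually, the unit is an isomorphism on $\mcT$ and hence on the thick subcategory generated by $\mcT$ inside $\D(\Qcoh Z)$. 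The generation hypothesis upgrades this subcategory to all of $\D(\Qcoh Z)$, and since $\mcT$ is compact the equivalence restricts to the bounded derived categories of coherent sheaves and of finitely generated modules. $R$-linearity is automatic because every functor in sight commutes with the structural map to $\Spec R$.

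For part (2)(a), I would establish both isomorphisms by a codimension-one comparison. Since $f$ is crepant, $Z$ is smooth and the exceptional locus has codimension $\geq 2$ in $\Spec R$, so on the big open $U$ where $f$ is an isomorphism one has $\End_Z(\mcT)|_{f^{-1}U} \simeq \End_R(f_*\mcT)|_U$. Both $\End_Z(\mcT) = \Gamma(Z, \mcT^\vee \otimes \mcT)$ and $\End_R((f_*\mcT)^{\vee\vee})$ are reflexive $R$-modules (the former because $\mcT^\vee \otimes \mcT$ is locally free on a smooth scheme and the difference is codimension $\geq 2$; the latter because $\Hom_R$ into a reflexive module is reflexive over a normal base), and the natural map between them is an isomorphism on $U$, so they must coincide. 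The intermediate identity $\End_R(f_*\mcT) = \End_R((f_*\mcT)^{\vee\vee})$ is then the standard fact that $\Hom_R(-, N)$ factors through the reflexive hull whenever $N$ is reflexive.

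For part (2)(b), finite global dimension is immediate from (1) together with smoothness of $Z$: every coherent sheaf on $Z$ admits a finite-length locally free resolution, and the equivalence transports this bound to $\modu \End_Z(\mcT)$. For Cohen--Macaulayness, I would use that the pretilting condition gives $R^i f_*(\mcT^\vee \otimes \mcT) = 0$ for $i > 0$ (localising as necessary), so $\End_Z(\mcT) = f_*(\mcT^\vee \otimes \mcT)$ is the pushforward of a vector bundle on a smooth crepant resolution of a Gorenstein base. A standard local cohomology / Grauert--Riemenschneider-type argument then yields the depth estimate $\operatorname{depth}_R \End_Z(\mcT) = \dim R$, which combined with (a) shows that $(f_*\mcT)^{\vee\vee}$ provides an NCCR.

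For part (2)(c), the assumption that $\mcO$ is a direct summand of $\mcT$ makes $\mcT$ a direct summand of $\mcT^\vee \otimes \mcT$; pushing down (and noting $f_*\mcO_Z = R$ because $R$ is normal and $f$ birational) shows that $f_*\mcT$ is an $R$-module direct summand of $\End_Z(\mcT)$, hence Cohen--Macaulay by (b), and therefore reflexive over the Gorenstein normal $R$. The main obstacle throughout is the Cohen--Macaulay claim in (2)(b): it is the only step that genuinely uses the crepant (rather than merely birational) nature of $f$, and it is where the local-cohomology input enters the argument.
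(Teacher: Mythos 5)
The paper itself does not give a proof: it simply cites \cite[Lemma~3.3]{TU10} for (1), \cite[Lemma~2.21 and Theorem~2.22]{HaraHirano24} for (2)(a)--(b), and \cite[Lemma~A.2]{TU10} for (2)(c). Your sketch is therefore doing genuine work, and the overall shape --- adjunction and generation for (1), duality plus crepancy for the Cohen--Macaulay statement, and deducing (c) from (b) via the direct-summand trick --- is the right one. Your derivation of (2)(c) from (2)(b), noting that a summand $\mcO \subset \mcT$ exhibits $\mcT$ as a direct summand of $\mcT^{\vee}\otimes\mcT$ and hence $f_*\mcT$ as a summand of the CM module $\End_Z(\mcT)$, is correct and cleaner than invoking a separate lemma. (Minor slip in (1): with $\RHom_Z(\mcT,-)$ as the right adjoint, the unit lives on the module side and the counit on the sheaf side, so the roles are reversed from what you wrote; the structure of the argument is unaffected.)

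The genuine gap is in (2)(a), and it propagates into (2)(b). You assert that $\End_Z(\mcT)=\Gamma(Z,\mcT^{\vee}\otimes\mcT)$ is a reflexive $R$-module ``because $\mcT^{\vee}\otimes\mcT$ is locally free on a smooth scheme and the difference is codimension $\geq 2$.'' The codimension-$\geq 2$ locus is the \emph{image} of the exceptional set in $\Spec R$; the exceptional set itself is typically a \emph{divisor} in $Z$ (e.g.\ the minimal resolution of an ADE singularity). Hence knowing that $(f_*\mcEnd(\mcT))|_U$ is locally free and that $\codim(\Spec R\setminus U)\geq 2$ does \emph{not} by itself let you extend sections: the pushforward of a locally free sheaf under a resolution need not be reflexive. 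Concretely, for $f\colon\Bl_0\A^2\to\A^2$ (non-crepant), $f_*\mcO(-E)=\mathfrak{m}$ is torsion-free but not reflexive, even though the difference locus downstairs is a point of codimension $2$. The missing ingredient is a depth estimate, which is exactly where crepancy enters. The clean route is the one from \cite{HaraHirano24} (going back to Van den Bergh): $\mcEnd(\mcT)$ is self-dual, $R^if_*\mcEnd(\mcT)=0$ for $i>0$ by pretilting, and Grothendieck duality together with $\omega_Z\simeq f^*\omega_R$ (crepancy) gives $\RHom_R(f_*\mcEnd(\mcT),\omega_R)\simeq f_*\mcEnd(\mcT)\otimes\omega_R$ concentrated in degree $0$, i.e.\ $\End_Z(\mcT)$ is maximal Cohen--Macaulay; reflexivity and the two isomorphisms in (a) then follow. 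In your write-up, (2)(a) leans on a reflexivity claim you have not established, and (2)(b) reduces to an unnamed ``standard local cohomology / Grauert--Riemenschneider-type argument'' without identifying self-duality and Grothendieck duality as the mechanism. You do correctly flag that the CM statement is where crepancy must be used, which is the right instinct; but as written the step is a gesture, not a proof. The fix is to prove the MCM property of $\End_Z(\mcT)$ first (as above) and then deduce reflexivity, the isomorphisms in (a), and (c) from it.
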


\begin{proof}
The proof for (1) can be found in \cite[Lemma~3.3]{TU10}.
For (2), (a) and (b) are shown in \cite[Lemma~2.21 and Theorem~2.22]{HaraHirano24},
and (c) is proved in \cite[Lemma~A.2]{TU10}.
\end{proof}

In order to check the partial tilting property of a given bundle, 
the following result from local cohomology theory is often useful.

\begin{lem} \label{loc coho prop}
Let $Z$ be a Cohen-Macaulay scheme and $j \colon U \to Z$ an open immersion.
Assume that the complement $W = Z \setminus j(U)$ satisfies $\codim_Z W = c$.
Then for any locally free sheaf $\mcE$ of finite rank, the natural map
\[ H^i(Z, \mcE) \to H^i(U, j^*\mcE) \]
is an isomorphism for all $i \leq c-2$.
\end{lem}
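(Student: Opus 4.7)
The plan is to reduce the claim to a local cohomology vanishing via the standard excision sequence.

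First, from the distinguished triangle $\RG_W(Z,-) \to \RG(Z,-) \to \RG(U, j^*(-))$, one obtains the long exact sequence
\[ \cdots \to H^i_W(Z, \mcE) \to H^i(Z, \mcE) \to H^i(U, j^*\mcE) \to H^{i+1}_W(Z, \mcE) \to \cdots. \]
Thus it suffices to prove $H^i_W(Z, \mcE) = 0$ for every $i \leq c - 1$, since this makes both of the outer terms vanish in the stated range $i \leq c-2$.

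Next I would appeal to Grothendieck's classical theorem (see for example SGA 2 or Hartshorne's notes on local cohomology): for any coherent sheaf $\mcF$ on $Z$,
\[ H^i_W(Z,\mcF) = 0 \quad \text{for } i < \mathrm{depth}_W(\mcF), \]
where $\mathrm{depth}_W(\mcF) := \inf_{z \in W}\mathrm{depth}_{I_{W,z}}(\mcF_z)$ and $I_{W,z}$ denotes the ideal defining $W$ in $\mcO_{Z,z}$. So the task reduces to verifying $\mathrm{depth}_W(\mcE) \geq c$.

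Finally, since $\mcE$ is locally free of finite rank, the stalk $\mcE_z$ is a free $\mcO_{Z,z}$-module at every $z \in W$, and the Cohen-Macaulay hypothesis on $Z$ makes $\mcO_{Z,z}$ a Cohen-Macaulay local ring. Therefore
\[ \mathrm{depth}_{I_{W,z}}(\mcE_z) = \mathrm{depth}_{I_{W,z}}(\mcO_{Z,z}) = \mathrm{ht}(I_{W,z}) \geq c, \]
where the middle equality uses Cohen-Macaulayness and the final inequality uses that every minimal prime over $I_{W,z}$ corresponds to an irreducible component of $W$ through $z$, which has codimension at least $c = \codim_Z W$ in $Z$. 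This yields $\mathrm{depth}_W(\mcE) \geq c$ and finishes the argument. The only real subtlety is being clean about the depth/height identification, which relies essentially on both hypotheses (the Cohen-Macaulayness of $Z$ and the local freeness of $\mcE$); dropping either would break the required depth bound.
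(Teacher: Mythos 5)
Your proof is correct and is precisely the ``standard argument in local cohomology theory'' that the paper alludes to (the paper itself gives no details, only a citation to \cite[Corollary~2.13]{Hara20}). The reduction via the excision triangle to $H^i_W(Z,\mcE)=0$ for $i\leq c-1$, the appeal to Grothendieck's depth-vanishing theorem, and the identification $\operatorname{depth}_{I_{W,z}}(\mcE_z)=\operatorname{depth}_{I_{W,z}}(\mcO_{Z,z})=\operatorname{ht}(I_{W,z})\geq c$ using local freeness and Cohen--Macaulayness are all exactly the intended chain of reasoning.
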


\begin{proof}
This follows from a standard argument in local cohomology theory.
See \cite[Corollary~2.13]{Hara20}, for example.
\end{proof}

\subsection{Simple flops}

The notions of simple flops and simple K-equivalences were introduced by Kanemitsu \cite{Kan22}.

\begin{defi}
\begin{enumerate}
\item[(1)] A \textit{Mukai pair} $(Z, \mcE)$ is a pair of a Fano manifold $Z$ and an ample vector bundle $\mcE$ such that $c_1(\mcE) = c_1(V)$.
\item[(2)] A Mukai pair $(Z, \mcE)$ is called \textit{simple} if $Z$ is of Picard rank one and $\P_Z(\mcE)$ admits another $\P^{r-1}$-bundle structure, where $r$ is the rank of $\mcE$.
\item[(3)] The projectivization $\P_Z(\mcE)$ associated to a simple Mukai pair $(Z,\mcE)$ is called a \textit{roof}.
\end{enumerate}
\end{defi}

\begin{ex} \label{rem roof K3}
Consider the two connected components $\OG_{\pm}(4,8)$ and their universal subbundles $\mcS_{\pm}$ for each $+$ and $-$.
Under an identification $\OG_{\pm}(4,8) \simeq \Q^6$, those bundles are one of spinor bundles over $\Q^6$.
Then as expressed in Section~\ref{sect: flop}, $(\OG_{\pm}(4,8), \mcS_{\pm}(2))$ are simple Mukai pairs, and they give the same roof $\OG(3,8)$ \cite[Example~5.5]{Kan22}.
\end{ex}

\begin{defi}
A birational map $\mcX_+ \dashrightarrow \mcX_-$ between two smooth varieties $\mcX_{\pm}$ is called a \textit{simple K-equivalence} if the following conditions are satisfied.
\begin{enumerate}
\item[(1)] The birational map is resolved as $\mcX_+ \xleftarrow{\phi_+} \widetilde{\mcX} \xrightarrow{\phi_-} \mcX_-$, where 
\[ \phi_{\pm} \colon \widetilde{\mcX} \simeq \Bl_{\mcZ_{\pm}^0} \mcX_{\pm} \to \mcX_{\pm} \]
is the blowing-up along a smooth subvariety $\mcZ_{\pm}^0 \subset \mcX_{\pm}$ for each $+$ and $-$.
\item[(2)] There is a linear equivalence $\phi_+^* K_{\mcX_+} \sim \phi_-^* K_{\mcX_-}$.
\end{enumerate}
If a simple K-equivalence is a flop (with respect to some boundaries), it is called a \textit{simple flop}.
\end{defi}

For a given $K$-equivalence, the associated subvarieties $\mcZ_{\pm}^0$ with their normal bundles in $\mcX_{\pm}$ are (scrolls of) simple Mukai pairs \cite[Thorem~0.2]{Kan22}.
The two blowing-ups $\phi_{\pm}$ has the same exceptional divisor $E \subset \widehat{\mcX}$, which is (a scroll of) the associated roof.

Conversely, one can construct a simple flop from a given roof as follows.
Let $(Z_+, \mcE_+)$ and $(Z_-, \mcE_-)$ be two simple Mukai pairs that give two distinct projective space bundle structures of the roof
$W = \P_{Z_+}(\mcE_+) \simeq \P_{Z_-}(\mcE_-)$ with projections $p_{\pm} \colon W \to Z_{\pm}$.
Then it is easy to see that
\begin{enumerate}
\item[(1)] $f_{\pm} \colon X_{\pm} \coloneqq \Tot_{Z_{\pm}}(\mcE_{\pm}^{\vee}) \to \Spec R$ is a flopping contraction for each $+$ and $-$, where $R = H^0(\mcO_{X_+}) \simeq H^0(\mcO_{X_-})$, 
\item[(2)] if $Z_{\pm}^0 \subset X_{\pm}$ denote the zero-sections, then
\[ \wX \coloneqq \Tot_{W}(\mcO(-1,-1)) \simeq \Bl_{Z_+^0} X_+ \simeq \Bl_{Z_-^0} X_- \]
as $R$-schemes, where $\mcO_W(a,b) \coloneqq p_+^*\mcO_{Z_+}(a) \otimes p_-^*\mcO_{Z_-}(b)$, and
\item[(3)] the birational map $X_+ \dashrightarrow X_-$ is a flop.
\end{enumerate}

In addition, the common exceptional divisor $E \subset \wX$ is the zero-section of $\wX \to W$,
and hence $E \simeq W$ and $\mcO_{\wX}(E) \simeq \mcO_{\wX}(-1,-1) \coloneqq (\wX \to W)^*\mcO_W(-1,-1)$.

\begin{defi} \label{def model flop}
\begin{enumerate}
\item[(1)] We call the flop $\Tot_{Z_+}(\mcE_+^{\vee}) \dashrightarrow \Tot_{Z_-}(\mcE_-^{\vee})$ the \textit{canonical local model} of the simple flop.
\item[(2)] Let $\mathfrak{m} \subset R$ be the maximal ideal corresponding to the unique singular point of $\Spec R$.
Let $\widehat{R}$ be the completion of the local ring $R_{\mathfrak{m}}$ along the maximal ideal,
and put $\widehat{X}_{\pm} \coloneqq \Tot_{Z_{\pm}}(\mcE_{\pm}^{\vee}) \otimes_R \widehat{R}$.
Then we call the flop $\widehat{X}_+ \dashrightarrow \widehat{X}_-$ the \textit{complete local model} of the simple flop.
\end{enumerate}
\end{defi}

Kanemitsu \cite{Kan22} named known simple Mukai pairs using Dynkin labels.
We follow this labeling and say that the canonical local model of a simple flop has the same Dynkin type.

\begin{rem}
The commutative algebra $R$ that appears in the flopping contractions of the canonical local model is the coordinate ring of the affine cone of the polarised manifold $(W, \mcO(1,1))$.
Since $Z_{\pm}$ are of Picard rank one, the manifolds $X_{\pm}$ has no more flops, and hence $R$ has exactly two minimal models (= $\Q$-factorial terminalizations), which are both crepant resolutions.
Similarly, $\widehat{R}$ has exactly two crepant resolutions, which are $\widehat{X}_{\pm}$.
\end{rem}

Let us fix the notation for the open locus that does not change under the flop.

\begin{defi} \label{common open sub}
Let $U$ be a scheme together with open immersions $j_{\pm} \colon U \to X_{\pm}$ such that $j_{\pm}(U) = X_{\pm} \setminus Z_{\pm}^0$, and call it the \textit{common open subset} of $X_{\pm}$.
\end{defi}

Given a simple Mukai pair $(Z_+,\mcE_+)$, let $(Z_-,\mcE_-)$ be the simple Mukai pair that give the other projective space bundle structure of $\P_{Z_+}(\mcE_+)$.
Then there are canonical isomorphisms $ H^0(\mcE_+) \simeq H^0(\mcO_W(1,1)) \simeq H^0(\mcE_-)$.
Pick a section $s_+ \in H^0(\mcE_+)$ and let $s_- \in H^0(\mcE_-)$ be the corresponding section.
If sections $s_{\pm}$ are general, then their zero-loci $V(s_{\pm}) \subset Z_{\pm}$ are (smooth) projective Calabi-Yau
varieties.
Those Calabi-Yau varieties need not to be birational in general (c.f.~\cite{IMOU20, KR22}), but are expected to satisfy several equivalences (e.g.~D-equivalence or L-equivalence).

\begin{rem}
As shown by Kanemitsu \cite{Kan22}, all roofs (and the corresponding simple Mukai pairs) whose associated Calabi-Yau manifolds $V(s_{\pm})$ are two-dimensional (i.e.~K3 surfaces) can be classified into exactly two types: $D_4$ and $G_2^{\dagger}$.
\end{rem}

\section{On type $D_4$ homogeneous varieties}

\subsection{Representation theory of type $D_4$} \label{rep of D}
Let us begin from recalling the root system and weight lattice of Dynkin type $D_4$.
The weight lattice $L_{D_4}$ of Dynkin type $D_4$ is a $\mathbb{Z}$-submodule of the real vector space $\R^4$
generated by vectors
\begin{center}
$(1,0,0,0)$, $(0,1,0,0)$, $(0,0,1,0)$, $(0,0,0,1)$ and $\left(\frac{1}{2},\frac{1}{2},\frac{1}{2},\frac{1}{2}\right)$.
\end{center}
The simple roots are given by
\begin{center}
$\alpha_1 = (1,-1,0,0)$, $\alpha_2 = (0,1,-1,0)$, $\alpha_3 = (0,0,1,-1)$, and $\alpha_4 = (0,0,1,1)$,
\end{center}
and the corresponding fundamental weights are
\begin{center}
$\omega_1 = (1,0,0,0)$, $\omega_2 = (1,1,0,0)$, $\omega_3 = \left(\frac{1}{2},\frac{1}{2},\frac{1}{2},\frac{1}{2}\right)$ and $\omega_4 = \left(\frac{1}{2},\frac{1}{2},\frac{1}{2},-\frac{1}{2}\right)$.
\end{center}
Thus, a weight $(\lambda_1, \lambda_2, \lambda_3, \lambda_4) \in L_{D_4}$ is dominant if and only if $\lambda_1 \geq \lambda_2 \geq \lambda_3 \geq \lvert \lambda_4 \rvert$.
Let $s_i \in W_{D_4}$ denote the element in the Weyl group $W_{D_4}$ corresponding to a simple root $\alpha_i$.
The weight lattice $L_{D_4}$ admits a natural action from $W_{D_4}$, but it also has another $W_{D_4}$-action called \textit{dotted action}, which can be described as follows.
\begin{align*}
s_1 \circ (\lambda_1, \lambda_2, \lambda_3, \lambda_4) &= (\lambda_2 - 1, \lambda_1 + 1, \lambda_3, \lambda_4) \\
s_2 \circ (\lambda_1, \lambda_2, \lambda_3, \lambda_4) &= (\lambda_1, \lambda_3 - 1, \lambda_2 + 1, \lambda_4) \\
s_3 \circ (\lambda_1, \lambda_2, \lambda_3, \lambda_4) &= (\lambda_1, \lambda_2, \lambda_4 -1, \lambda_3 + 1) \\
s_4 \circ (\lambda_1, \lambda_2, \lambda_3, \lambda_4) &= (\lambda_1, \lambda_2, -\lambda_4 -1, -\lambda_3 - 1)
\end{align*}
This action will used when applying the Borel-Bott-Weil theorem.

For each simple root $\alpha_i$, there is a corresponding parabolic subgroup $P_{\alpha_i} \subset \Spin(8)$
such that the associated homogeneous variety $D_4(i) \coloneqq \Spin(8)/P_{\alpha_i}$ has Picard rank one.
Note that the homogeneous varieties
\begin{center}
$D_4(1) = \OG(1,8)$, $D_4(3) = \OG_+(4,8)$, and $D_4(4) = \OG_-(4,8)$
\end{center}
are all isomorphic to the six-dimensional smooth quadric $\Q^6$.
Although the main interest of this article is the geometry of $\OG_{\pm}(4,8)$ and vector bundles over them,
only when applying the Borel-Bott-Weil theorem,
we identify them with $D_4(1) = \OG(1,8) \simeq \Q^6$,
since this identification most clearly reflects the symmetry of the weights.

It is known that, given an irreducible homogeneous vector bundle $\mcF$ on $\OG(1,8)$, 
there is a unique corresponding weight $\omega \in L_{D_4}$.
Using this correspondence, the bundle is denoted by $\mcF = \mcF_{\omega}$.
On the other hand, the Levi factor of $P_{\alpha_1}$ naturally contains $\GL(4)$, and hence an irreducible representation of $\GL(4)$ determines an irreducible homogeneous vector bundle over $\OG(1,8)$.
Recall that irreducible representations of $\GL(4)$ are parametrised by the set 
\[ L^+_{\GL(4)} = \{(a_1, a_2, a_3,a_4) \in \Z^4 \mid a_1 \geq a_2 \geq a_3 \geq a_4 \}, \]
and let $V_{(a_1,a_2,a_3,a_4)}$ denote the irreducible representation of $\GL(4)$ corresponding to $(a_1, a_2, a_3, a_4) \in L_{\GL(4)}^+$.
Then the irreducible homogeneous bundle over $\OG(1,8)$ that corresponds to an irreducible $\GL(4)$-representation 
$V_{(a_1,a_2,a_3,a_4)}$ is $\mcF_{\varphi(a_1,a_2,a_3,a_4)}$, where $\varphi$ is a linear map given by
\begin{equation} \label{interpret weight}
\varphi = 
\renewcommand{\arraystretch}{1.2}
\begin{pmatrix}
\frac{1}{2} & \frac{1}{2} & \frac{1}{2} & \frac{1}{2} \\
\frac{1}{2} & \frac{1}{2} & -\frac{1}{2} & -\frac{1}{2} \\
\frac{1}{2} & -\frac{1}{2} & \frac{1}{2} & -\frac{1}{2} \\
\frac{1}{2} & -\frac{1}{2} & -\frac{1}{2} & \frac{1}{2}
\end{pmatrix}
\colon L^+_{\GL(4)} \subset \Z^4 \to L_{D_4} \simeq \Z^4.
\end{equation}
This identification is useful for computing the decomposition of a tensor product of two irreducible homogeneous bundles over $\OG(1,8)$.

As established in \cite{Ottaviani88},
the quadric $6$-fold $\Q^6$ admits two spinor bundles.
If $\mcS$ denotes one of them, then the other is $\mcS^{\vee}(-1)$.
Then there exists an identification $\Q^6 \simeq \OG(1,8)$ such that
\begin{align*}
&\mcS(1) \simeq \mcF_{\left(\frac{1}{2},\frac{1}{2},\frac{1}{2},\frac{1}{2}\right)} = \mcF_{\varphi(1,0,0,0)}, \\
&\mcS^{\vee}(-1) \simeq \mcF_{\left(-\frac{1}{2},\frac{1}{2},\frac{1}{2},-\frac{1}{2}\right)} = \mcF_{\varphi(0,0,0,-1)}, \text{~and} \\
&\mcO(2) \simeq \mcF_{(2,0,0,0)} = \mcF_{\varphi(1,1,1,1)}.
\end{align*}
Under the same identification, 
\begin{align*}
\text{$\Sym^k \left(\mcS(1)\right) \otimes \mcO(j) \simeq \mcF_{\left(\frac{k+2j}{2},\frac{k}{2},\frac{k}{2},\frac{k}{2}\right)}$ and $\mcS^{\vee}(j) \simeq \mcF_{\left(\frac{1+2j}{2},\frac{1}{2},\frac{1}{2},-\frac{1}{2}\right)}$,}
\end{align*}
which will be used later.

\subsection{Borel-Bott-Weil computations}

The aim of this section is to compute cohomologies of homogeneous bundles over $\Q^6$ applying the Borel-Bott-Weil theorem.
For readers who are not familiar with the Borel-Bott-Weil theorem, see \cite[Section 4.3]{Weyman03}.

\begin{lem} \label{BBW}
The following hold.
\begin{enumerate}
\item[\rm (1)] $H^i(\Q^6, \Sym^k \mcS \otimes \mcO(2k+j)) = 0$ for all $i > 0$, $k \geq 0$ and $j \geq - 5$.
\item[\rm (2)] $H^i(\Q^6, \Sym^k \mcS \otimes  \mcS \otimes \mcO(2k+j)) = 0$ for all $i > 0$, $k \geq 0$ and $j \geq - 2$ except $(i,j,k) = (1,-2, 1)$.
\item[\rm (3)] $H^1(\Q^6, \mcS \otimes \mcS) \simeq \C$.
\item[\rm (4)] $H^i(\Q^6, \Sym^k \mcS \otimes  \mcS^{\vee} \otimes \mcO(2k+j-1)) = 0$ for all $i > 0$ and $j \geq -2$.
\item[\rm (5)] $H^i(\Q^6, \Sym^k \mcS \otimes \mcS \otimes \mcS^{\vee} \otimes \mcO(2k)) = 0$ for all $k \geq 0$.
\item[\rm (6)] $H^i(\Q^6, \Sym^k \mcS \otimes \mcS \otimes \mcS \otimes \mcO(2k+1)) = 0$ for all $k \geq 0$.
\item[\rm (7)] $H^i(\Q^6, \Sym^k \mcS \otimes \mcS^{\vee} \otimes \mcS^{\vee} \otimes \mcO(2k-1)) = 0$ for all $k \geq 0$.
\end{enumerate}
\end{lem}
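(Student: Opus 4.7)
The plan is to apply the Borel-Bott-Weil theorem systematically, using the identification $\Q^6 \simeq D_4(1) = \OG(1,8)$ set up in Section~\ref{rep of D}. For type $D_4$ one has $\rho = (3,2,1,0)$, and for each irreducible homogeneous bundle $\mcF_\omega$ the cohomology either vanishes completely (when $\omega + \rho$ is singular, i.e.\ has a repeated entry in absolute value) or is concentrated in a single degree equal to the length of the shortest element of $W_{D_4}$ taking $\omega + \rho$ into the dominant chamber, where $W_{D_4}$ acts by signed permutations with an even number of sign changes.

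Items (1) and (4) concern already-irreducible bundles and are immediate. Using the formulas at the end of Section~\ref{rep of D}, one has $\Sym^k \mcS \otimes \mcO(2k+j) \simeq \mcF_{(3k/2 + j,\, k/2,\, k/2,\, k/2)}$, whose shifted weight is dominant when $j \geq -k-1$ and has a repeated coordinate in absolute value for the remaining finitely many $j$ with $-5 \leq j \leq -k-2$. Item (4) follows by the same procedure applied to $\Sym^k \mcS \otimes \mcS^{\vee} \otimes \mcO(2k+j-1)$.

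For items (2), (3), (5), (6) and (7) the first step is to decompose the tensor products $\Sym^k \mcS \otimes \mcS$, $\Sym^k \mcS \otimes \mcS^\vee$, $\Sym^k \mcS \otimes \mcS \otimes \mcS^\vee$, and so on, into direct sums of irreducible homogeneous bundles. I plan to carry out the decomposition on the $\GL(4)$-representation side via the map $\varphi$, where the combinatorics is governed by Pieri's rule (and, for the triple tensor products in (5)-(7), by Littlewood-Richardson), and then translate the summands back into $D_4$-weights and apply BBW summand by summand. The surviving $H^1 = \C$ in (3), equivalently the exceptional case $(i,j,k) = (1,-2,1)$ of (2), will come from the single summand $\mcF_{(-1,1,0,0)}$ in the decomposition $\mcS \otimes \mcS = \mcF_{(-1,1,1,1)} \oplus \mcF_{(-1,1,0,0)}$: its shifted weight $(2,3,1,0)$ is one simple transposition away from $\rho$ and contributes $\C$ in $H^1$, while the other summand's shifted weight $(2,3,2,1)$ has two $2$'s and is killed by BBW.

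The hard part is bookkeeping rather than any single conceptual obstacle: because $k$ ranges over all non-negative integers, one has to check that, for every $k$ and every $j$ in the stated range, each summand in the decomposition produces a shifted weight that is either dominant or singular. The most delicate checks occur in (5)-(7), where the number of summands grows with $k$; I expect that the last three coordinates of each summand's shifted weight will differ from $(k/2 + 2,\, k/2 + 1,\, k/2)$ only by a small permutation or sign flip, while the first coordinate sits well above them, so that singularity and dominance can be diagnosed uniformly in $k$.
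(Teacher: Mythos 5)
Your overall strategy — decompose tensor products into irreducible homogeneous summands on the $\GL(4)$ side via $\varphi$ and then apply Borel--Bott--Weil summand by summand — is exactly the paper's approach. Your identification of the $H^1 = \C$ in item (3) is also correct and matches the paper: it comes from the summand $\mcF_{(-1,1,0,0)}$ of $\mcS \otimes \mcS$, whose $\rho$-shifted weight $(2,3,1,0)$ is regular and one simple reflection from the dominant chamber (the paper expresses this via the dotted action, $s_1 \circ (-1,1,0,0) = (0,0,0,0)$), while the other summand $\mcF_{(-1,1,1,1)}$ has singular shifted weight $(2,3,2,1)$.

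There is, however, a genuine error in your plan: you assert that items (1) and (4) ``concern already-irreducible bundles and are immediate'' and group (4) with (1). In fact $\Sym^k \mcS \otimes \mcS^\vee$ is \emph{reducible} for every $k \geq 1$: on the $\GL(4)$ side, $\Sym^k \C^4 \otimes (\C^4)^\vee \simeq V_{(k,0,0,-1)} \oplus V_{(k-1,0,0,0)}$ by Pieri's rule, which under $\varphi$ and the requisite line-bundle twist yields the two summands
$\mcF_{\left(\frac{3k+2j-1}{2}, \frac{k+1}{2}, \frac{k+1}{2}, \frac{k-1}{2}\right)} \oplus \mcF_{\left(\frac{3k+2j-1}{2}, \frac{k-1}{2}, \frac{k-1}{2}, \frac{k-1}{2}\right)}$,
as recorded in the paper. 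So (4) must be handled like (2), (5), (6), (7): decompose first, then apply BBW to each summand. Treating it ``by the same procedure as (1)'' (i.e.\ a single weight check) would omit one summand. Separately, a small slip in (1): the shifted weight $(3k/2+j+3, k/2+2, k/2+1, k/2)$ is regular dominant precisely for $j \geq -k$, not $j \geq -k-1$; at $j = -k-1$ the first two coordinates coincide, making it singular rather than dominant. The conclusion (vanishing in positive degree) is unaffected, but the dichotomy you state is misdrawn at the boundary.
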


\begin{proof}
Let us show (1).
The corresponding weight of the irreducible homogeneous bundle $\Sym^k \mcS \otimes \mcO(2k+j)$ is 
\begin{align} \label{weight 1}
\left(\frac{3k+2j}{2}, \frac{k}{2}, \frac{k}{2}, \frac{k}{2}\right) \in L_{D_4}.
\end{align}
For $k \geq 0$ and $j \geq - 5$, this weight is not  dominant if and only if $k + j <0$.
However, for all such $(k,j)$, one can see that the weight (\ref{weight 1}) is singular.
Thus (1) follows from the Borel-Bott-Weil theorem.
For example, when $(k,j) = (1,-5)$, the weight is $\left(\frac{-7}{2}, \frac{1}{2}, \frac{1}{2}, \frac{1}{2}\right)$.
For this, one has 
\[ (s_3s_2s_1) \circ \left(-\frac{7}{2}, \frac{1}{2}, \frac{1}{2}, \frac{1}{2}\right) = \left(-\frac{1}{2}, -\frac{1}{2}, -\frac{1}{2}, -\frac{1}{2}\right). \]
However, since
\[ s_4 \circ \left(-\frac{1}{2}, -\frac{1}{2}, -\frac{1}{2}, -\frac{1}{2}\right) = \left(-\frac{1}{2}, -\frac{1}{2}, -\frac{1}{2}, -\frac{1}{2}\right), \]
this weight is singular.

Next, let us show (2) and (3).
The strategy is as follows.
\begin{enumerate}
\item[(i)] Compute the decomposition of the tensor product of irreducible homogeneous bundles with respect to the weights in $L^+_{\GL(4)}$ using the Littlewood-Richardson rule for $\GL(4)$-representations, and
\item[(ii)] then interpret the weights into the ones in $L_{D_4}$ using (\ref{interpret weight}), and apply the Borel-Bott-Weil theorem.
\end{enumerate}
The bundle in (2) can be decomposed as
\begin{align*}
\Sym^k \mcS \otimes  \mcS \otimes \mcO(2k+j) \simeq
\begin{cases}
\mcF_{w_1} & \text{(if $k = 0$)} \\
\mcF_{w_1} \oplus \mcF_{w_2} & \text{(if $k \geq 1$)},
\end{cases}
\end{align*}
where
\[ 
w_1 = \left(\frac{3k+2j-1}{2}, \frac{k+1}{2}, \frac{k+1}{2}, \frac{k+1}{2}\right),~w_2 = \left(\frac{3k+2j-1}{2}, \frac{k+1}{2}, \frac{k-1}{2}, \frac{k-1}{2}\right) \in L_{D_4}.
\]
For $k \geq 0$ and $j \geq -2$, one can see that $w_1$ is dominant or singular as in the proof of (1).
Similarly, for $k \geq 1$ and $j \geq -2$, the weight $w_2$ is neither dominant nor singular if and only if $(k,j) = (1,-2)$.
In the exceptional case $(k,j) = (1,-2)$, we have $w_2 = \left(-1,1,0,0\right)$, and hence $s_1 \circ w_2 = (0,0,0,0)$.
Therefore the Borel-Bott-Weil theorem implies (3) and thus (2).

All the remaining cases can be shown in the same way, and we do not give further details, except the results for the decompositions of tensor products.
For simplicity, let $\left(\frac{k_1}{2}, \frac{k_2}{2}, \frac{k_3}{2}, \frac{k_4}{2}\right)$ denote the irreducible homogeneous bundle $\mcF_{\left(\frac{k_1}{2}, \frac{k_2}{2}, \frac{k_3}{2}, \frac{k_4}{2}\right)}$.
\begin{align*}
%%%
&\Sym^k \mcS \otimes  \mcS^{\vee} \otimes \mcO(2k+j-1) \\
{}\simeq{} &\begin{cases}
\left(\frac{2j-1}{2}, \frac{1}{2}, \frac{1}{2}, -\frac{1}{2}\right) & \text{(if $k = 0$)} \\
\left(\frac{3k+2j-1}{2}, \frac{k+1}{2}, \frac{k+1}{2}, \frac{k-1}{2}\right) \oplus \left(\frac{3k+2j-1}{2}, \frac{k-1}{2}, \frac{k-1}{2}, \frac{k-1}{2}\right) & \text{(if $k \geq 1$)}
\end{cases} \\
%%%
&\Sym^k \mcS \otimes \mcS \otimes \mcS^{\vee} \otimes \mcO(2k) \\
{}\simeq{} 
&\begin{cases}
\left(0,0,0,0\right) \oplus \left(0,1,1,0\right)
& \text{(if $k = 0$)} \\
\left(\frac{3}{2}, \frac{1}{2}, \frac{1}{2}, \frac{1}{2}\right)^{\oplus 2} \oplus \left(\frac{3}{2}, \frac{3}{2}, \frac{3}{2}, \frac{1}{2}\right) 
\oplus \left(\frac{3}{2}, \frac{3}{2}, \frac{1}{2}, -\frac{1}{2}\right)
& \text{(if $k = 1$)} \\
\left(\frac{3k}{2}, \frac{k}{2}, \frac{k}{2}, \frac{k}{2}\right)^{\oplus 2} \oplus \left(\frac{3k}{2}, \frac{k+2}{2}, \frac{k+2}{2}, \frac{k}{2}\right) 
\oplus \left(\frac{3k}{2}, \frac{k+2}{2}, \frac{k}{2}, \frac{k-2}{2}\right) \oplus \left(\frac{3k}{2}, \frac{k}{2}, \frac{k-2}{2}, \frac{k-2}{2}\right)
& \text{(if $k \geq 2$)}
\end{cases} \\
%%%
&\Sym^k \mcS \otimes \mcS \otimes \mcS \otimes \mcO(2k+1) \\
{}\simeq{} 
&\begin{cases}
\left(0,1,1,1\right) \oplus \left(0,1,0,0\right)
& \text{(if $k = 0$)} \\
\left(\frac{3}{2}, \frac{3}{2}, \frac{3}{2}, \frac{3}{2}\right) \oplus \left(\frac{3}{2}, \frac{3}{2}, \frac{1}{2}, \frac{1}{2}\right)^{\oplus 2}
\oplus \left(\frac{3}{2}, \frac{1}{2}, \frac{1}{2}, -\frac{1}{2}\right)
& \text{(if $k = 1$)} \\
\left(\frac{3k}{2}, \frac{k+2}{2}, \frac{k+2}{2}, \frac{k+2}{2}\right) 
\oplus \left(\frac{3k}{2}, \frac{k+2}{2}, \frac{k}{2}, \frac{k}{2}\right)^{\oplus 2} \oplus \left(\frac{3k}{2}, \frac{k}{2}, \frac{k}{2}, \frac{k-2}{2}\right)
\oplus \left(\frac{3k}{2}, \frac{k+2}{2}, \frac{k-2}{2}, \frac{k-2}{2}\right)
& \text{(if $k \geq 2$)}
\end{cases} \\
%%%
&\Sym^k \mcS \otimes \mcS^{\vee} \otimes \mcS^{\vee} \otimes \mcO(2k-1) \\
{}\simeq{} 
&\begin{cases}
\left(0,1,1,-1\right) \oplus \left(0,1,0,0\right)
& \text{(if $k = 0$)} \\
\left(\frac{3}{2}, \frac{3}{2}, \frac{3}{2}, -\frac{1}{2}\right) \oplus \left(\frac{3}{2}, \frac{3}{2}, \frac{1}{2}, \frac{1}{2}\right)
\oplus \left(\frac{3}{2}, \frac{1}{2}, \frac{1}{2}, -\frac{1}{2}\right)^{\oplus 2}
& \text{(if $k = 1$)} \\
\left(\frac{3k}{2}, \frac{k+2}{2}, \frac{k+2}{2}, \frac{k-2}{2}\right) \oplus \left(\frac{3k}{2}, \frac{k+2}{2}, \frac{k}{2}, \frac{k}{2}\right)
\oplus \left(\frac{3k}{2}, \frac{k}{2}, \frac{k}{2}, \frac{k-2}{2}\right)^{\oplus 2}
\oplus \left(\frac{3k}{2}, \frac{k-2}{2}, \frac{k-2}{2}, \frac{k-2}{2}\right)
& \text{(if $k \geq 2$)}
\end{cases}
\end{align*}
One can see that all the weights appeared above are dominant or singular.
\end{proof}

\subsection{The key bundle and the exchange diagram over $\OGr(3,8)$} \label{key bundle}
Let us recall the property of the bundle $\mcE$ over $\OG(3,8)$, which had been considered in the previous work \cite{Hara24} and will also play an important role in this article.
Consider the type $D_4$ roof $p_{\pm} \colon W = \OGr(3,8) \to Z_{\pm} = \OG_{\pm}(4,8)$.
Let $\mcV$ be the rank three universal subbundle over $W$.
Then there are two (dual of the) tautological sequences
\begin{center}
$0 \to \mcO_W(1,-1) \to p_+^*\mcS_+^{\vee} \to \mcV^{\vee} \to 0$ and
$0 \to \mcO_W(-1,1) \to p_-^*\mcS_-^{\vee} \to \mcV^{\vee} \to 0$.
\end{center}
Pulling-back the first exact sequence by the second gives a rank five vector bundle $\mcE$ on $Z$ and a commutative diagram
\[ \begin{tikzcd}
 & & 0 \arrow[d] & 0 \arrow[d] & \\
 & & \mcO_W(-1,1) \arrow[r, equal] \arrow[d] & \mcO_W(-1,1) \arrow[d] & \\
0 \arrow[r] & \mcO_W(1,-1) \arrow[r] \arrow[d, equal] & \mcE \arrow[r] \arrow[d] & p_-^*\mcS_-^{\vee} \arrow[r] \arrow[d] & 0 \\
0 \arrow[r] & \mcO_W(1,-1) \arrow[r] & p_+^*\mcS_+^{\vee} \arrow[r] \arrow[d] & \mcV^{\vee} \arrow[r] \arrow[d] & 0 \\
&&0&0&
\end{tikzcd} \]
of exact sequences.
The bundle $\mcE$ over $W$ will be referred as the \textit{key bundle}.

\subsection{Exceptional collections over $\OG_{\pm}(4,8)$}
Recall that $Z_{\pm} = \OG_{\pm}(4,8) \simeq \Q^6$.
The most well-known exceptional collection over $Z_{\pm}$ is the Kapranov collection (c.f.~\cite{Kap88})
\[ \Db(\coh Z_{\pm}) = \left\langle \mcS_{\pm}^{\vee}(-1), \mcS_{\pm}, \mcO, \mcO(1), \mcO(2), \mcO(3), \mcO(4), \mcO(5) \right\rangle. \]
However, we do not employ this exceptional collection in this article.
Instead, consider a mutation
\[
\begin{tikzpicture}
\draw (0,0) node[anchor=mid west] (a) {$\mcS_{\pm}^{\vee}(-1)$};
\draw ($(a.mid east)+(0.3,0)$) node[anchor= mid west] (b) {$\mcS_{\pm}$};
\draw ($(b.mid east)+(0.3,0)$) node[anchor=mid west] (c) {$\mcO$};
\draw ($(c.mid east)+(0.3,0)$) node[anchor=mid west] (d) {$\mcO(1)$};
\draw ($(d.mid east)+(0.3,0)$) node[anchor=mid west] (e) {$\mcO(2)$};
\draw ($(e.mid east)+(0.3,0)$) node[anchor=mid west] (f) {$\mcO(3)$};
\draw ($(f.mid east)+(0.3,0)$) node[anchor=mid west] (g) {$\mcO(4)$};
\draw ($(g.mid east)+(0.3,0)$) node[anchor=mid west] (h) {$\mcO(5)$};
%\draw[dashed, ->] (node cs:name=h, anchor=south) to[bend left=20] (node cs:name=e, anchor=south);
\draw[->] (node cs:name=g, anchor=south) to[bend left=10] node[yshift=-10pt]{${} \otimes \mcO(-6)$} (node cs:name=a, anchor=south west);
\draw (node cs:name=f, anchor=north west) -- (node cs:name=h, anchor=north east) -- (node cs:name=h, anchor=south east) -- (node cs:name=f, anchor=south west) -- (node cs:name=f, anchor=north west);

\end{tikzpicture}
\]
and then twisting the whole collection by $\mcO(1)$ gives the exceptional collection
\begin{align} \label{1st exc coll}
\Db(\coh Z_{\pm}) = \left\langle \mcO(-2), \mcO(-1), \mcO, \mcS_{\pm}^{\vee}, \mcS_{\pm}(1), \mcO(1), \mcO(2), \mcO(3) \right\rangle. 
\end{align}
This is the first collection that is used in this article.
We will need one more collection, which can be obtained by applying mutations to (\ref{1st exc coll}) as
\[
\begin{tikzpicture}
\draw (0,0) node[anchor=mid west] (a) {$\mcO(-2)$};
\draw ($(a.mid east)+(0.3,0)$) node[anchor= mid west] (b) {$\mcO(-1)$};
\draw ($(b.mid east)+(0.3,0)$) node[anchor=mid west] (c) {$\mcO$};
\draw ($(c.mid east)+(0.3,0)$) node[draw, anchor=mid west] (d) {$\mcS_{\pm}^{\vee}$};
\draw ($(d.mid east)+(0.3,0)$) node[draw, anchor=mid west] (e) {$\mcS_{\pm}(1)$};
\draw ($(e.mid east)+(0.3,0)$) node[anchor=mid west] (f) {$\mcO(1)$};
\draw ($(f.mid east)+(0.3,0)$) node[anchor=mid west] (g) {$\mcO(2)$};
\draw ($(g.mid east)+(0.3,0)$) node[anchor=mid west] (h) {$\mcO(3).$};
\draw[->] (node cs:name=e, anchor=south) to[bend right=20] node[yshift=-10pt]{$\RR_{\mcO(1)}$} (node cs:name=f, anchor=south east);
\draw[->] (node cs:name=d, anchor=south) to[bend left=40] node[yshift=-10pt]{$\LL_{\mcO}$} (node cs:name=c, anchor=south west);
\end{tikzpicture}
\]
These mutations can be computed explicitly using the exact sequences in \cite[Theorem~2.8]{Ottaviani88}, and after twisting the whole collection by $\mcO(-1)$, we obtain a new exceptional collection
\begin{align} \label{2nd exc coll}
\Db(\coh Z_{\pm}) = \left\langle \mcO(-3), \mcO(-2), \mcS_{\pm}(-1), \mcO(-1), \mcO, \mcS_{\pm}^{\vee}, \mcO(1), \mcO(2) \right\rangle. 
\end{align}
These two collections (\ref{1st exc coll}) and (\ref{2nd exc coll}) play important roles to prove Theorem~\ref{main thm}.

\section{Construction of tilting bundles} \label{sect: tilting}

Consider the two connected components $Z_{\pm} = \OG_{\pm}(4,8)$ of the orthogonal grassmannian $\OG(4,8)$,
and let $\mcS_{\pm}$ be the universal subbundles.
Put
\[ X_{\pm} = \Tot_{Z_{\pm}}(\mcS_{\pm}(2)) = \Spec_{Z_{\pm}} \Sym^{\bullet} \mcS_{\pm}(2). \]
Let $\mcO_{X_{\pm}}(a)$ (resp.~$\mcS_{X_{\pm}}$) denote the pull-back of $\mcO_{Z_{\pm}}(a)$ (resp.~$\mcS_{\pm}$) on $Z_{\pm}$ by the projection $X_{\pm} \to Z_{\pm}$.

\subsection{The first tilting bundles}

\begin{lem} \label{pretilting}
The following hold.
\begin{enumerate}
\item[\rm (1)] $H^i(X_{\pm}, \mcO_{X_{\pm}}(j)) = 0$ for all $i > 0$ and $j \geq -5$.
\item[\rm (2)] $H^i(X_{\pm}, \mcS_{X_{\pm}}(j)) = 0$ for all $i>0$ and $j \geq -2$ except when $(i,j) = (1,-2)$.
\item[\rm (3)] $H^1(X_{\pm}, \mcS_{X_{\pm}}(-2)) = \C$.
\item[\rm (4)] $H^i(X_{\pm}, \mcS_{X_{\pm}}^{\vee}(j)) = 0$ for all $i>0$ and $j \geq -3$.
\item[\rm (5)] $\Ext_{X_{\pm}}^i(\mcS_{X_{\pm}}, \mcS_{X_{\pm}}) = 0$ for all $i>0$.
\item[\rm (6)] $\Ext_{X_{\pm}}^i(\mcS^{\vee}_{X_{\pm}}, \mcS_{X_{\pm}}(1)) = 0$ for all $i>0$.
\item[\rm (7)] $\Ext_{X_{\pm}}^i(\mcS_{X_{\pm}}(1), \mcS^{\vee}_{X_{\pm}}) = 0$ for all $i>0$.
\end{enumerate}
\end{lem}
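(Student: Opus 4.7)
The plan is to reduce every group appearing in the statement to cohomology on $Z_{\pm} \simeq \Q^6$ and then invoke Lemma~\ref{BBW}. Write $\pi \colon X_{\pm} \to Z_{\pm}$ for the projection of the total-space bundle. Since $\pi$ is affine,
\[ \pi_* \mcO_{X_{\pm}} \simeq \bigoplus_{k \geq 0} \Sym^k \mcS_{\pm} \otimes \mcO_{Z_{\pm}}(2k), \]
so the projection formula gives, for any coherent sheaf $\mcF$ on $Z_{\pm}$,
\[ H^i(X_{\pm}, \pi^*\mcF) \simeq \bigoplus_{k \geq 0} H^i\bigl(Z_{\pm},\, \mcF \otimes \Sym^k \mcS_{\pm} \otimes \mcO(2k)\bigr). \]
Since $\mcO_{X_{\pm}}(j)$, $\mcS_{X_{\pm}}$, and $\mcS_{X_{\pm}}^{\vee}$ are all pullbacks of locally free sheaves under $\pi$, the same formula applied with $\mcF = \mcF_1^{\vee} \otimes \mcF_2$ also computes $\Ext^i_{X_{\pm}}(\pi^*\mcF_1, \pi^*\mcF_2)$.

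Granted this reduction, the seven assertions become direct applications of the seven items of Lemma~\ref{BBW}. For (1), I take $\mcF = \mcO(j)$ and quote Lemma~\ref{BBW}(1). For (2) and (3), I take $\mcF = \mcS_{\pm}(j)$ and rewrite the summand as $\Sym^k \mcS_{\pm} \otimes \mcS_{\pm} \otimes \mcO(2k+j)$; all summands vanish by Lemma~\ref{BBW}(2) except the exceptional case $(i,j,k) = (1,-2,1)$, which by Lemma~\ref{BBW}(3) contributes exactly $\C$. For (4), I take $\mcF = \mcS_{\pm}^{\vee}(j)$; the built-in $-1$ shift in Lemma~\ref{BBW}(4) translates its hypothesis $j \geq -2$ into the condition $j \geq -3$ in the present normalisation. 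For (5), (6), (7), I apply the Ext-formula to the pairs $(\mcS_{\pm}, \mcS_{\pm})$, $(\mcS_{\pm}^{\vee}, \mcS_{\pm}(1))$, $(\mcS_{\pm}(1), \mcS_{\pm}^{\vee})$ in turn, and rearrange tensor factors so that the resulting cohomology groups match items (5), (6), (7) of Lemma~\ref{BBW} respectively.

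There is essentially no obstacle here: Lemma~\ref{BBW} has been packaged precisely so as to feed into this computation, and the remaining work is bookkeeping. The only points requiring care are the various $\pm 1$ twist shifts in (4), (6), (7), and the identification of the unique non-vanishing summand in (3) with the one-dimensional space produced by Lemma~\ref{BBW}(3).
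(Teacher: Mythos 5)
Your proof is correct and follows essentially the same route as the paper: reduce every group on $X_{\pm}$ to cohomology on $Z_{\pm}\simeq\Q^6$ via the projection formula $H^i(X_{\pm},\pi^*\mcF)\simeq\bigoplus_{k\geq 0}H^i(Z_{\pm},\Sym^k(\mcS_{\pm}(2))\otimes\mcF)$, noting that $\Ext^i$ between pullbacks of locally free sheaves is again of this form, and then read off the answer from Lemma~\ref{BBW}. The only thing the paper makes slightly more explicit is the choice of identifications $\psi_{\pm}\colon Z_{\pm}\simeq\OG(1,8)$ with $\psi_{\pm}^*\mcS\simeq\mcS_{\pm}$, which is exactly the normalisation you are implicitly using.
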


\begin{proof}
Let $\mcK$ be a vector bundle over $Z_{\pm}$, and $\mcK_{X_{\pm}}$ the pull-back of $\mcK$ by the natural projection $X_{\pm} \to Z_{\pm}$.
Then the projection formula gives
\[ H^i(X_{\pm}, \mcK_{X_{\pm}}) \simeq \bigoplus_{k \geq 0}H^i\left(Z_{\pm}, \Sym^k\left(\mcS_{\pm}(2)\right) \otimes \mcK \right). \]
Fix identifications $\psi_{\pm} \colon Z_{\pm} \simeq \OG(1,8)$ for each $+$ and $-$ 
such that $\psi_{\pm}^*\mcS \simeq \mcS_{\pm}$.
Then the results follow from Lemma~\ref{BBW}.
\end{proof}

By the above lemma, it follows that $\Ext_{X_{\pm}}^1(\mcS_{X_{\pm}}^{\vee}, \mcO_{X_{\pm}}(-2)) \simeq H^1(X_{\pm}, \mcS_{X_{\pm}}(-2)) = \C$.

\begin{defi}
For each $+$ and $-$, let $\mcP_{\pm}$ be the vector bundle over $X_{\pm}$ that lies in the unique non-trivial extension
\begin{align} \label{def of P}
0 \to \mcO_{X_{\pm}}(-2) \to \mcP_{\pm} \to \mcS_{X_{\pm}}^{\vee} \to 0. 
\end{align}
\end{defi}

Dualising the sequence (\ref{def of P}) and then twisting by $\mcO_{X_{\pm}}(1)$ gives
\begin{align} \label{def of Pv}
0 \to \mcS_{X_{\pm}}(1) \to \mcP_{\pm}^{\vee}(1) \to \mcO_{X_{\pm}}(3) \to 0,
\end{align}
and this sequence gives a non-trivial element of $\Ext^1(\mcO_{X_{\pm}}(3), \mcS_{X_{\pm}}(1)) = \C$.

\begin{lem} \label{semiuniv ext}
Let  $\mcF_1$ and $\mcF_2$ be vector bundles over a $\C$-scheme $S$ such that $\Ext^1(\mcF_2, \mcF_1) = \C$, and $0 \to \mcF_1 \to \mcF_3 \to \mcF_2\to 0$ the associated unique non-trivial extension. 
Assume that the following two conditions hold.
\begin{enumerate}
\item[\rm (a)] $\mcF_1$ and $\mcF_2$ are pretilting.
\item[\rm (b)] $\Ext^i(\mcF_1, \mcF_2) = 0$ for all $i >0$ and $\Ext^i(\mcF_2, \mcF_1) = 0$ for all $i > 1$.
%\item[\rm (c)] the induced map $\Hom(\mcF_2, \mcF_2)^{\oplus r} \to \Ext^1(\mcF_2, \mcF_1)$ is surjective.
\end{enumerate}
Then the following hold.
\begin{enumerate}
\item[\rm (1)] The bundle $\mcF_3$ satisfies the vanishings
\begin{enumerate}
\item[$\bullet$] $\Ext^i(\mcF_3, \mcF_1) = 0$ for all $i > 0$, and
\item[$\bullet$] $\Ext^i(\mcF_2, \mcF_3) = 0$ for all $i > 0$.
\end{enumerate}
\item[\rm (2)] The bundles $\mcF_1 \oplus \mcF_3$ and $\mcF_2 \oplus \mcF_3$ are pretilting. In particular, $\mcF_3$ itself is pretilting.
\end{enumerate}
\end{lem}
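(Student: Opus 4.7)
The plan is to apply derived Hom functors to the defining extension
\[ 0 \to \mcF_1 \to \mcF_3 \to \mcF_2 \to 0 \qquad (\star) \]
and chase the resulting long exact sequences. For part (1), applying $\RHom(-, \mcF_1)$ to $(\star)$ produces a long exact sequence connecting $\Ext^i(\mcF_2, \mcF_1)$, $\Ext^i(\mcF_3, \mcF_1)$, and $\Ext^i(\mcF_1, \mcF_1)$. For $i \geq 2$, the two outer terms vanish by hypotheses (a) and (b), so $\Ext^i(\mcF_3, \mcF_1) = 0$ is immediate. For $i = 1$, the relevant portion is
\[ \End(\mcF_1) \xrightarrow{\partial} \Ext^1(\mcF_2, \mcF_1) \to \Ext^1(\mcF_3, \mcF_1) \to \Ext^1(\mcF_1, \mcF_1) = 0, \]
so it suffices to show $\partial$ is surjective. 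By the standard description of Yoneda composition, $\partial(\id_{\mcF_1})$ is the extension class of $(\star)$, which is non-zero by construction; since $\Ext^1(\mcF_2, \mcF_1) = \C$ is one-dimensional, $\partial$ is surjective. A symmetric argument with $\RHom(\mcF_2, -)$ applied to $(\star)$ handles $\Ext^i(\mcF_2, \mcF_3) = 0$: the analogous connecting map $\End(\mcF_2) \to \Ext^1(\mcF_2, \mcF_1)$ again sends $\id_{\mcF_2}$ to the (non-zero) class of $(\star)$.

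For part (2), the missing vanishings are $\Ext^i(\mcF_1, \mcF_3) = 0$, $\Ext^i(\mcF_3, \mcF_2) = 0$, and $\Ext^i(\mcF_3, \mcF_3) = 0$ for $i > 0$. The first two are purely formal: apply $\RHom(\mcF_1, -)$ to $(\star)$ and invoke $\Ext^i(\mcF_1, \mcF_1) = 0$ together with $\Ext^i(\mcF_1, \mcF_2) = 0$ from (a) and (b); apply $\RHom(-, \mcF_2)$ to $(\star)$ and invoke $\Ext^i(\mcF_2, \mcF_2) = 0$ together with $\Ext^i(\mcF_1, \mcF_2) = 0$. Once these are in place, applying $\RHom(\mcF_3, -)$ to $(\star)$ and using $\Ext^i(\mcF_3, \mcF_1) = 0$ (from part (1)) together with $\Ext^i(\mcF_3, \mcF_2) = 0$ just established gives $\Ext^i(\mcF_3, \mcF_3) = 0$. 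Combining these vanishings with part (1) shows that both $\mcF_1 \oplus \mcF_3$ and $\mcF_2 \oplus \mcF_3$ are pretilting.

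The main subtle point is the surjectivity of the $\Ext^1$ connecting maps in part (1); everything else is a mechanical bookkeeping exercise with long exact sequences. The surjectivity itself is really a restatement of the fact that a non-trivial extension class is realised by $(\star)$, so the only genuine input is the one-dimensionality hypothesis $\Ext^1(\mcF_2, \mcF_1) = \C$, which ensures that the non-zero extension class is a generator.
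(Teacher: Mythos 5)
Your proposal is correct and follows essentially the same route as the paper: chase long exact sequences arising from the defining extension, with the only non-formal step being the surjectivity of the two $\Ext^1$ connecting maps, which both you and the paper derive from the fact that the extension class is a non-zero element of the one-dimensional space $\Ext^1(\mcF_2,\mcF_1)=\C$. You spell out the Yoneda description of the connecting map a bit more explicitly than the paper (which just asserts that $\delta$ and $\delta'$ are surjective), but the argument is the same.
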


\begin{proof}
By assumptions (a) and (b), it follows that $\Ext^i(\mcF_3, \mcF_2) = 0$ for all $i > 0$.
Similarly, applying the functor $\Ext^i(-,\mcF_1)$ to the given exact sequence yields a long exact sequence
\begin{align*}
\cdots &\to \Hom(\mcF_1, \mcF_1) \xrightarrow{\delta} \Ext^1(\mcF_2, \mcF_1) \to \Ext^1(\mcF_3, \mcF_1) \to \Ext^1(\mcF_1, \mcF_1) = 0 \to \cdots \\
&\to \Ext^i(\mcF_2, \mcF_1) = 0 \to \Ext^i(\mcF_3, \mcF_1) \to \Ext^i(\mcF_1, \mcF_1) = 0 \to \cdots.
\end{align*}
Since $\delta$ is surjective by the assumption, the vanishing $\Ext^i(\mcF_3, \mcF_1) = 0$ holds for all $i > 0$.
Finally, applying the functor $\Ext^i(-,\mcF_3)$ to the given sequence shows that 
$\Ext^i(\mcF_3,\mcF_3) = 0$, i.e.~the bundle $\mcF_3$ is pretilting.

Dually, it easily follows that $\Ext^i(\mcF_1, \mcF_3) = 0$ for all $i > 0$.
Applying the functor $\Ext^i(\mcF_2, -)$ to the given sequence gives a long exact sequence
\begin{align*}
\cdots &\to \Hom(\mcF_2, \mcF_2) \xrightarrow{\delta'} \Ext^1(\mcF_2, \mcF_1) \to \Ext^1(\mcF_2, \mcF_3) \to \Ext^1(\mcF_2, \mcF_2) = 0 \to \cdots \\
&\to \Ext^i(\mcF_2, \mcF_1) = 0 \to \Ext^i(\mcF_2, \mcF_3) \to \Ext^i(\mcF_2, \mcF_2) = 0 \to \cdots.
\end{align*}
As before, $\delta'$ is surjective by the assumption, the vanishing $\Ext^i(\mcF_2, \mcF_3) = 0$ holds for all $i > 0$.
\end{proof}

\begin{thm} \label{thm 1st tilting}
The bundles
\begin{align*}
\mcT^{\sharp}_{+} &\coloneqq  \mcO_{X_{+}}(-2) \oplus \mcO_{X_{+}}(-1)  \oplus \mcO_{X_{+}}  \oplus \mcP_+ \oplus \mcP_+^{\vee}(1) \oplus \mcO_{X_{+}}(1) \oplus \mcO_{X_{+}}(2) \oplus \mcO_{X_{+}}(3), ~ \text{and} \\
\mcT^{\flat}_{-} &\coloneqq  \mcO_{X_{-}}(-3) \oplus \mcO_{X_{-}}(-2)  \oplus \mcO_{X_{-}}(-1)  \oplus \mcP_-(-1) \oplus \mcP_-^{\vee} \oplus \mcO_{X_{-}} \oplus \mcO_{X_{-}}(1) \oplus \mcO_{X_{-}}(2)
\end{align*}
are tilting bundles over $X_+$ and $X_-$, respectively.
\end{thm}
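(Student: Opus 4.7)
The plan is to verify, for each of $\mcT^{\sharp}_+$ and $\mcT^{\flat}_-$, the two defining properties of a tilting bundle: the Ext-vanishings $\Ext^i(\mcT^{\sharp}_+, \mcT^{\sharp}_+) = 0 = \Ext^i(\mcT^{\flat}_-, \mcT^{\flat}_-)$ for $i > 0$, and the generation of $\D(\Qcoh X_\pm)$. The conceptual picture is that $\mcT^{\sharp}_+$ (respectively $\mcT^{\flat}_-$) is a ``decorated'' version of the pull-back of the exceptional collection (\ref{1st exc coll}) (respectively (\ref{2nd exc coll})) on $Z_\pm$, in which the spinor-type summand and its partner line bundle are combined into the universal extensions $\mcP_\pm$, $\mcP_\pm^\vee$; this replacement is precisely what destroys the single obstructing class $\Ext^1(\mcS^\vee_{X_\pm}, \mcO_{X_\pm}(-2)) = \C$ from Lemma~\ref{pretilting}~(3).

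For the pretilting property of $\mcT^{\sharp}_+$, I would split the summands into the six line bundles $\mcO_{X_+}(a)$ with $a \in \{-2,\dots,3\}$ and the two extensions $\mcP_+, \mcP_+^\vee(1)$, and check all Ext pairs. The line-line pairs follow directly from Lemma~\ref{pretilting}~(1), since every relevant twist $b-a$ lies in the range $j \geq -5$. Pairs with exactly one of $\mcP_+, \mcP_+^\vee(1)$ are reduced, via the defining sequences (\ref{def of P}) and (\ref{def of Pv}), to Ext groups covered by Lemma~\ref{pretilting}~(1), (2), (4); the unique obstructing class is annihilated because the connecting map $\Hom(\mcO_{X_+}(-2), \mcO_{X_+}(-2)) \to \Ext^1(\mcS^\vee_{X_+}, \mcO_{X_+}(-2))$ sends $\id$ to the non-trivial class of $\mcP_+$ and is therefore surjective, exactly as in Lemma~\ref{semiuniv ext}. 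The self-Ext groups $\Ext^*(\mcP_+, \mcP_+)$ and $\Ext^*(\mcP_+^\vee(1), \mcP_+^\vee(1))$ follow directly from Lemma~\ref{semiuniv ext}~(2). The analysis for $\mcT^{\flat}_-$ is formally identical after the appropriate twist.

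For the generation property, let $\pi \colon X_+ \to Z_+$ denote the affine structure morphism. From (\ref{def of P}) one obtains $\mcS^\vee_{X_+} = \pi^*\mcS_+^\vee$, and from (\ref{def of Pv}) one obtains $\mcS_{X_+}(1) = \pi^*(\mcS_+(1))$, both as objects of the thick subcategory $\langle \mcT^{\sharp}_+\rangle \subset \Perf(X_+)$. Together with the line-bundle summands, this shows that $\langle \mcT^{\sharp}_+\rangle$ contains the pull-back of every term in (\ref{1st exc coll}); the standard fact that the pull-back of a full exceptional collection along an affine morphism classically generates $\Perf$ (via the increasing filtration of $\pi_*\mcO_{X_+} = \Sym^\bullet(\mcS_+(2))$) then gives that $\mcT^{\sharp}_+$ classically generates $\Perf(X_+) = \Db(\coh X_+)$, and hence is a compact generator of $\D(\Qcoh X_+)$ because $X_+$ is smooth. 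The argument for $\mcT^{\flat}_-$ is analogous, using (\ref{2nd exc coll}).

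The main obstacle is the cross-Ext group $\Ext^*(\mcP_+, \mcP_+^\vee(1))$, which is not covered by Lemma~\ref{semiuniv ext} and forces one to chain both defining sequences. Applying $\Ext^*(\mcP_+,-)$ to (\ref{def of Pv}) reduces it to $\Ext^*(\mcP_+, \mcS_{X_+}(1))$ and $\Ext^*(\mcP_+, \mcO_{X_+}(3))$, each of which is then further reduced via (\ref{def of P}) to groups such as $\Ext^*(\mcO_{X_+}(-2), \mcS_{X_+}(1))$ and the mixed-type $\Ext^*(\mcS^\vee_{X_+}, \mcS_{X_+}(1))$. The latter is exactly Lemma~\ref{pretilting}~(6), whose proof requires the most delicate Borel-Bott-Weil computation in Lemma~\ref{BBW}; the dual cross-Ext $\Ext^*(\mcP_+^\vee(1), \mcP_+)$ and its counterpart for $\mcT^{\flat}_-$ are handled symmetrically using Lemma~\ref{pretilting}~(7).
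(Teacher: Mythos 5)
Your overall strategy matches the paper's own proof of Theorem~\ref{thm 1st tilting}: you decompose the Ext-checks into line-line pairs (covered by Lemma~\ref{pretilting}~(1)), ``mixed'' pairs reduced via the defining sequences (\ref{def of P}) and (\ref{def of Pv}), self-Exts of $\mcP_+$ and $\mcP_+^\vee(1)$ via Lemma~\ref{semiuniv ext}, and the two cross-Exts via a chain of both sequences; and you prove generation by observing that $\langle\mcT_+^\sharp\rangle$ contains the pull-back of (\ref{1st exc coll}), which is a compact generator by the affine-morphism argument. All of this is correct and is the same route the paper takes.

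There is, however, one concrete slip in your treatment of $\mcT_-^\flat$. You write that the generation argument for $\mcT_-^\flat$ is analogous ``using (\ref{2nd exc coll})''. But (\ref{2nd exc coll}) involves $\mcS_-(-1)$ and $\mcS_-^\vee$, whereas the two extension sequences for the summands $\mcP_-(-1)$ and $\mcP_-^\vee$ of $\mcT_-^\flat$, namely
\[
0 \to \mcO_{X_-}(-3) \to \mcP_-(-1) \to \mcS_{X_-}^\vee(-1) \to 0, \qquad
0 \to \mcS_{X_-} \to \mcP_-^\vee \to \mcO_{X_-}(2) \to 0,
\]
produce $\mcS_{X_-}^\vee(-1)$ and $\mcS_{X_-}$ inside $\langle\mcT_-^\flat\rangle$, not $\mcS_{X_-}(-1)$ and $\mcS_{X_-}^\vee$. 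In other words, $\langle\mcT_-^\flat\rangle$ contains the pull-back of the $\mcO(-1)$-twist of (\ref{1st exc coll}) (with $\mcS_+$ replaced by $\mcS_-$), which is a full exceptional collection, but it is \emph{not} the collection (\ref{2nd exc coll}). The collection (\ref{2nd exc coll}) is instead the one used for $\mcT_-^\sharp$ and $\mcT_+^\flat$ in Theorem~\ref{thm 2nd tilting}, and the requisite Ext-vanishings for that case require the extra Borel--Bott--Weil computations of Lemma~\ref{BBW2}, not merely Lemma~\ref{BBW}. The paper avoids this issue entirely for the present theorem by choosing a non-canonical identification $\widetilde\varphi\colon X_-\xrightarrow{\sim} X_+$ with $\widetilde\varphi^*\mcS_+\simeq \mcS_-$, under which $\widetilde\varphi^*\mcT_+^\sharp\simeq \mcT_-^\flat\otimes\mcO_{X_-}(1)$, so that both the pretilting and generation properties of $\mcT_-^\flat$ follow immediately from those of $\mcT_+^\sharp$. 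Either fix (the correct twisted collection, or the paper's explicit isomorphism) closes the gap; as written, the invocation of (\ref{2nd exc coll}) here would send you down the wrong path.
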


\begin{proof}
The direct sum of the collection (\ref{1st exc coll})
\[ \mcO(-2) \oplus \mcO(-1) \oplus \mcO \oplus \mcS_+^{\vee} \oplus \mcS_+(1) \oplus \mcO(1) \oplus \mcO(2) \oplus \mcO(3) \]
is a generator of $\D(\Qcoh Z_+)$.
Thus, by \cite[Lemma~3.1]{Hara21}, the bundle
\[ \mcU \coloneqq \mcO_{X_+}(-2) \oplus \mcO_{X_+}(-1) \oplus \mcO_{X_+} \oplus \mcS_{X_+}^{\vee} \oplus \mcS_{X_+}(1) \oplus \mcO_{X_+}(1) \oplus \mcO_{X_+}(2) \oplus \mcO_{X_+}(3) \]
is a generator of $\D(\Qcoh X_+)$.
By the sequences (\ref{def of P}) and (\ref{def of Pv}), the triangulated subcategory $\langle \mcT_+^{\sharp} \rangle \subset \Db(\coh X_+)$ contains $\mcU$.
Therefore $\mcT_+^{\sharp}$ is also a generator.

Thus, to prove that $\mcT_+^{\sharp}$ is tilting, it is enough to show that $\mcT_+^{\sharp}$ is pretilting.
Note that all the summands of $\mcT_+^{\sharp}$ are pretilting by Lemma~\ref{semiuniv ext}.
Thus, together with Lemma~\ref{pretilting}~(1), it remains to check the following vanishings.
\begin{enumerate}
\item[(i)] $H^i(X_+, \mcP_+(a)) = 0$ for all $i > 0$ and $-3 \leq a \leq 2$,
\item[(ii)] $H^i(X_+, \mcP_+^{\vee}(b)) = 0$ for all $i > 0$ and $-2 \leq b \leq 3$,
\item[(iii)] $\Ext^i_{X_+}(\mcP_+, \mcP_+^{\vee}(1)) = 0$ for all $i > 0$, and
\item[(iv)] $\Ext^i_{X_+}(\mcP_+^{\vee}(1), \mcP_+) = 0$ for all $i > 0$.
\end{enumerate}
First, (i) follows from Lemma~\ref{pretilting}~(1) and (4) together with the sequence (\ref{def of P}).

Let us show (ii).
As in (i), Lemma~\ref{pretilting}~(1) and (2) together with the sequence (\ref{def of Pv}) show that 
$H^i(X_+, \mcP_+^{\vee}(b)) = 0$ for all $i > 0$ and $-1 \leq b \leq 3$.
The last case when $b = -2$ follows from Lemma~\ref{semiuniv ext}~(1).

It remains to show (iii) and (iv).
Applying the functor $\Ext^*_{X_+}(\mcS_{X_+}(1),-)$ to (\ref{def of P}) and then using Lemma~\ref{pretilting}~(4) and (7) show that
$\Ext^i_{X_+}(\mcS_{X_+}(1),\mcP_+) = 0$ for all $i > 0$.
Thus, applying the functor $\Ext^*_{X_+}(-,\mcP_+)$ to (\ref{def of Pv}) together with (i) proves the vanishing $\Ext^i_{X_+}(\mcP_+, \mcP_+^{\vee}(1)) = 0$ for all $i > 0$, which is (iii).
(iv) can be shown similarly.
We have now completed the proof that $\mcT_+^{\sharp}$ is a tilting bundle over $X_+$.

Let us consider a non-canonical identification $\varphi \colon Z_- \xrightarrow{\sim} Z_+$ such that $\varphi^*\mcS_+ \simeq \mcS_-$.
This induces an isomorphism $\widetilde{\varphi} \colon X_- \xrightarrow{\sim} X_+$ of $\C$-schemes,
which satisfies $\widetilde{\varphi}^*\mcT_+^{\sharp} \simeq \mcT_-^{\flat} \otimes \mcO_{X_-}(1)$.
Thus $\mcT_-^{\flat}$ is also a tilting bundle over $X_-$.
\end{proof}

Although we have constructed tilting bundles over both $X_+$ and $X_-$,
they are not enough to prove Theorem~\ref{main thm},
and it is required to construct other tilting bundles.
The next two sections will be devoted to the construction of the second tilting bundles.

\subsection{Comparison of vector bundles}
In the previous section, the bundle $\mcP_{\pm}$ had been constructed to give summands of the tilting bundles.
The present section studies the strict transforms of line bundles and the bundle $\mcP_{\pm}$ under the flop.
Recall from Section~\ref{sect: flop} that the diagram of the flop is 
\[ \begin{tikzcd}
 & E = W \arrow[d, hook] \arrow[dl, "\pi_+"'] \arrow[dr, "\pi_-"]&  \\
Z_+^0 \arrow[d, hook] & \wX = \Tot_W(\mcO_W(-1,-1)) \arrow[dl, "\phi_+"', "\Bl_{Z_+^0}"] \arrow[dr, "\phi_-", "\Bl_{Z_-^0}"'] & Z_-^0 \arrow[d, hook] \\
X_+ = \Tot_{Z_+}(\mcS_+^{\vee}(-2)) \arrow[d] \arrow[rr, dashed, "\text{flop}"'] \arrow[dr, "f_+"'] & & X_- = \Tot_{Z_-}(\mcS_-^{\vee}(-2)) \arrow[d] \arrow[dl, "f_-"] \\
 Z_+ = \OG_+(4,8) & \Spec R & Z_- = \OG_-(4,8).
\end{tikzcd} \]
The following is elementary.

\begin{lem} \label{push of exc div}
For each $+$ and $-$, the following hold.
\begin{enumerate}
\item[\rm (1)] $R(\phi_{\pm})_* \mcO_{\wX}(kE) \simeq \mcO_{X_{\pm}}$ for all $0 \leq k \leq 3$.
\item[\rm (2)] There exists an exact triangle
\[ \mcO_{X_{\pm}} \to R(\phi_{\pm})_* \mcO_{\wX}(4E) \to \mcO_{Z_{\pm}^0}(-6)[-3] \to \mcO_{X_{\pm}}[1]. \]
\end{enumerate}
\end{lem}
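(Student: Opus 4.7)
The plan is a standard blowup computation. For each $k \geq 1$, I would set up the short exact sequence
\[ 0 \to \mcO_{\wX}((k-1)E) \to \mcO_{\wX}(kE) \to \mcO_E(kE) \to 0 \]
and apply $R(\phi_{\pm})_*$. Since $\wX = \Tot_W(\mcO_W(-1,-1))$ with $E = W$ as its zero-section, the normal bundle is $N_{E/\wX} = \mcO_W(-1,-1)$, hence $\mcO_{\wX}(kE)\vert_E = \mcO_W(-k,-k)$. Tensoring the tautological surjection $p_+^* \mcS_+ \twoheadrightarrow \mcO_W(-1,1)$ coming from Section~\ref{key bundle} by $\mcO_{Z_+}(2)$ yields $p_+^*(\mcS_+(2)) \twoheadrightarrow \mcO_W(1,1)$, which (with the analogous computation on the $-$ side) identifies $\mcO_W(1,1)$ with the relative hyperplane bundle $\mcO_{W/Z_{\pm}}(1)$ of the $\P^3$-bundle $p_{\pm}: W = \P_{Z_{\pm}}(\mcS_{\pm}(2)) \to Z_{\pm}$. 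Consequently $R(\pi_{\pm})_* \mcO_W(-k,-k) = R(p_{\pm})_* \mcO_{W/Z_{\pm}}(-k)$, which is controlled fibrewise by $H^*(\P^3, \mcO(-k))$.

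For $k = 1, 2, 3$ this cohomology vanishes, so the exact sequence gives $R(\phi_{\pm})_* \mcO_{\wX}(kE) \simeq R(\phi_{\pm})_* \mcO_{\wX}((k-1)E)$. Induction starting from $R(\phi_{\pm})_* \mcO_{\wX} = \mcO_{X_{\pm}}$ (which holds since $\phi_{\pm}$ is the blowup of a smooth centre in a smooth variety) proves~(1).

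For~(2), apply the sequence with $k = 4$. By relative Serre duality along the $\P^3$-bundle $p_{\pm}$,
\[ R(p_{\pm})_* \mcO_{W/Z_{\pm}}(-4) \simeq (\det \mcS_{\pm}(2))^{-1}[-3]. \]
The weight identifications in Section~\ref{rep of D} give $\det \mcS_{\pm}(1) \simeq \mcO(2)$ (the $\GL(4)$-weight of $\mcS_{\pm}(1)$ being $(1,0,0,0)$, whose determinant is $(1,1,1,1)$), hence $\det \mcS_{\pm} \simeq \mcO(-2)$ and $\det \mcS_{\pm}(2) \simeq \mcO(6)$, so the above pushforward becomes $\mcO_{Z_{\pm}}(-6)[-3]$. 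Combining with~(1) applied to $k=3$, the exact triangle obtained from $R(\phi_{\pm})_*$ of the $k=4$ sequence is precisely the one claimed in~(2).

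The only points requiring care are the convention-dependent identification $\mcO_W(1,1) \simeq \mcO_{W/Z_{\pm}}(1)$ and the determinant calculation for $\mcS_{\pm}$; once these are pinned down the result is immediate from Bott vanishing on $\P^3$ and relative Serre duality.
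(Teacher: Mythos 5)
Your argument is correct and takes the same approach as the paper: push forward the filtration sequence $0 \to \mcO_{\wX}((k-1)E) \to \mcO_{\wX}(kE) \to \mcO_E(kE) \to 0$ along $\phi_{\pm}$, use the $\P^3$-bundle structure $\pi_{\pm} \colon E \to Z_{\pm}^0$ to kill the exceptional term for $1 \leq k \leq 3$ and identify it with $\mcO_{Z_{\pm}^0}(-6)[-3]$ for $k = 4$, and induct. You simply spell out the identification $\mcO_E(E) \simeq \mcO_{W/Z_{\pm}}(-1)$ and the determinant bookkeeping that the paper leaves implicit; the two determinant expressions agree since $\det(\mcS_{\pm}^{\vee}(-2)) = (\det\mcS_{\pm}(2))^{-1} = \mcO_{Z_{\pm}}(-6)$.
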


\begin{proof}
Since the projections $\pi_{\pm} \colon E \to Z_{\pm}^0$ are both $\P^3$-bundles, $R(\pi_{\pm})_*\mcO_E(kE) = 0$ for all
$1 \leq k \leq 3$ and
\[ R(\pi_{\pm})_*\mcO_E(4E) \simeq R^3(\pi_{\pm})_*\mcO_E(4E)[-3] \simeq \det \left(\mcS_{\pm}^{\vee}(-2)\right)[-3] \simeq \mcO_{Z_{\pm}^0}(-6)[-3]. \]
Thus the statement follows from the exact sequence $0 \to \mcO_{\wX}((k-1)E) \to \mcO_{\wX}(kE) \to \mcO_E(kE) \to 0$ and the induction on $k$.
\end{proof}

Let $\mcE$ be the key bundle over $W$,
and $\mcE_{\wX}$ the pull-back of the key bundle $\mcE$ to $\wX$ 
by the projection $\wX = \Tot_W(\mcO_W(-1,-1)) \to W$.
The pull-back of the exact sequences over $W$ (in Section~\ref{key bundle}) to $\wX$ by the projection gives two exact sequences
\begin{align*}
&0 \to \mcO_{\wX}(-1,1) \to \mcE_{\wX} \to \phi_+^*\mcS_{X_+}^{\vee} \to 0, \\
&0 \to \mcO_{\wX}(1,-1) \to \mcE_{\wX} \to \phi_-^*\mcS_{X_-}^{\vee} \to 0.
\end{align*}

\begin{prop} \label{prop exchange}
There are the following isomorphisms.
\begin{enumerate}
\item[\rm (1)] $R(\phi_{\pm})_*\left(\mcE_{\wX}(E)\right) \simeq \mcP_{\pm}$  for each $+$ and $-$.
\item[\rm (2)] $R(\phi_+)_*\left(\mcE_{\wX}^{\vee}(-1,0)\right) \simeq \mcP_+^{\vee}(-1)$ and $R(\phi_-)_*\left(\mcE_{\wX}^{\vee}(-1,0)\right) \simeq \mcP_-^{\vee}(1)$.
\item[\rm (3)] $R(\phi_+)_*\left(\mcE_{\wX}^{\vee}(0,-1)\right) \simeq \mcP_+^{\vee}(1)$ and $R(\phi_-)_*\left(\mcE_{\wX}^{\vee}(0,-1)\right) \simeq \mcP_-^{\vee}(-1)$.
\end{enumerate}

\end{prop}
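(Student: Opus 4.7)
The strategy is to push forward the two defining short exact sequences of $\mcE_{\wX}$ on $\wX$, after twisting by an appropriate line bundle, and to identify the resulting triangle on $X_{\pm}$ with the sequence defining $\mcP_{\pm}$ (respectively an appropriate dual-twist thereof). All three parts will be handled uniformly by this technique.

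For part (1) on the $+$ side, the plan is to twist the sequence $0 \to \mcO_{\wX}(-1,1) \to \mcE_{\wX} \to \phi_+^*\mcS_{X_+}^{\vee} \to 0$ by $\mcO_{\wX}(E) \simeq \mcO_{\wX}(-1,-1)$, so that the subsheaf becomes $\mcO_{\wX}(-2,0) \simeq \phi_+^*\mcO_{X_+}(-2)$ and the quotient becomes $\phi_+^*\mcS_{X_+}^{\vee} \otimes \mcO_{\wX}(E)$. The projection formula together with Lemma~\ref{push of exc div}(1) then gives
\[ R(\phi_+)_*\mcO_{\wX}(-2,0) \simeq \mcO_{X_+}(-2), \qquad R(\phi_+)_*\bigl(\phi_+^*\mcS_{X_+}^{\vee} \otimes \mcO_{\wX}(E)\bigr) \simeq \mcS_{X_+}^{\vee}, \]
so that applying $R(\phi_+)_*$ to the twisted sequence produces an exact triangle
\[ \mcO_{X_+}(-2) \to R(\phi_+)_*\mcE_{\wX}(E) \to \mcS_{X_+}^{\vee} \to \mcO_{X_+}(-2)[1], \]
classified by an element of $\Ext^1_{X_+}(\mcS_{X_+}^{\vee}, \mcO_{X_+}(-2)) \simeq \C$ by Lemma~\ref{pretilting}(3). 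The $-$ side follows by swapping roles and using the other defining sequence of $\mcE_{\wX}$.

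For parts (2) and (3), I would dualise the defining sequences to obtain $0 \to \phi_{\pm}^*\mcS_{X_{\pm}} \to \mcE_{\wX}^{\vee} \to \mcO_{\wX}(\pm 1, \mp 1) \to 0$, and then twist by $\mcO_{\wX}(-1,0)$ or $\mcO_{\wX}(0,-1)$. Using the rewrites
\[ \mcO_{\wX}(0,-1) \simeq \phi_+^*\mcO_{X_+}(1) \otimes \mcO_{\wX}(E), \qquad \mcO_{\wX}(-1,0) \simeq \phi_-^*\mcO_{X_-}(1) \otimes \mcO_{\wX}(E), \]
together with analogous identities $\mcO_{\wX}(1,-2) \simeq \phi_+^*\mcO_{X_+}(3) \otimes \mcO_{\wX}(2E)$ and $\mcO_{\wX}(-2,1) \simeq \phi_-^*\mcO_{X_-}(3) \otimes \mcO_{\wX}(2E)$, each pushforward collapses by the projection formula and Lemma~\ref{push of exc div}(1) to a triangle whose outer terms match those of the sequence defining the claimed $\mcP_{\pm}^{\vee}(\pm 1)$. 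For instance, on the $+$ side with twist $\mcO_{\wX}(-1,0)$ one obtains
\[ \mcS_{X_+}(-1) \to R(\phi_+)_*\mcE_{\wX}^{\vee}(-1,0) \to \mcO_{X_+}(1) \to \mcS_{X_+}(-1)[1], \]
which must be compared with the sequence $0 \to \mcS_{X_+}(-1) \to \mcP_+^{\vee}(-1) \to \mcO_{X_+}(1) \to 0$ obtained from (\ref{def of P}).

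The main obstacle is to verify in each case that the constructed triangle realises the \emph{non-trivial} extension rather than the split one. Since both outer terms are vector bundles in degree zero, each pushforward is itself a rank-five vector bundle, and because the classifying $\Ext^1$ is one-dimensional there are only two candidates. I would distinguish them via the $R$-module $\Hom_{X_+}(R(\phi_+)_*\mcE_{\wX}(E), \mcO_{X_+}(-2))$, which strictly differs between the split sum (contributing the full factor $\End(\mcO_{X_+}(-2)) = R$) and $\mcP_+$ (in which this factor is cut down to the maximal ideal $\mathfrak{m}$). Computing this $\Hom$ on the $\wX$ side via Grothendieck duality with relative dualising sheaf $\omega_{\wX/X_+} \simeq \mcO_{\wX}(3E)$ (since $\phi_+$ blows up a smooth subvariety of codimension four) reduces the task to checking that a certain boundary map in the long exact sequence of a twisted version of the dualised defining sequence is non-zero; this non-vanishing in turn reflects the non-triviality of the original extension class of $\mcE$ on $W$, which pulls back to a non-trivial class on $\wX$ because $\wX \to W$ is a line-bundle projection. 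The analogous verifications in parts (2) and (3) proceed by entirely parallel computations after bookkeeping the appropriate twists.
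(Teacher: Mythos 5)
Your overall set-up (push forward the twisted defining sequences of $\mcE_{\wX}$, match the outer terms with those in the sequence defining $\mcP_\pm$ or $\mcP_\pm^\vee(\pm1)$, and then argue that the resulting extension on $X_\pm$ is non-split) is the same framework as in the paper, and your identifications of the outer terms via Lemma~\ref{push of exc div} and the projection formula are correct. The difficulty, as you correctly note, is entirely in establishing non-triviality of the pushed-forward extension.

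It is precisely there, however, that your argument has a genuine gap. You propose to deduce non-triviality from the fact that the defining extension of $\mcE$ on $W$ is non-split and that its pullback to $\wX$ stays non-split because $\wX\to W$ is an affine bundle. That much is true, but it is not sufficient: the class you need to be non-zero lives in $\Ext^1_{X_\pm}(\mcO_{X_\pm}(\cdot),\mcS_{X_\pm}(\cdot))$ and is the \emph{image} under $R(\phi_\pm)_*$ of the class in $\Ext^1_{\wX}$, and $R(\phi_\pm)_*$ has no a priori reason to act injectively on $\Ext^1$. The whole content of the paper's proof is to show that the relevant map $\varepsilon_\pm\colon\Ext^1_{\wX}(\text{quotient},\,\text{sub})\to\Ext^1_{X_\pm}(\cdot,\cdot)$ is an isomorphism; this is done by factoring the counit $\phi_\pm^*(\phi_\pm)_*Q\to Q$ through a short exact sequence whose cokernel is supported on $E$ and computing that $\RHom$ from that cokernel vanishes via the Borel--Bott--Weil vanishing $\RG(Z_\pm,\mcS_\pm(-2))=0$. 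Your alternative route (compute $\Hom_{X_+}(R(\phi_+)_*\mcE_{\wX}(E),\mcO_{X_+}(-2))$ by Grothendieck duality and check it is $\mathfrak m$ rather than $R$) is plausible in principle but is not carried out, and the final step where you assert that the relevant boundary map ``reflects the non-triviality of the original extension class of $\mcE$ on $W$'' is exactly the assertion that needs a proof, not a heuristic.

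There is a second, subtler issue: the $+$ and $-$ sides are \emph{not} parallel, contrary to what your proposal assumes. On the $+$ side the quotient in the relevant sequence is $\mcO_{\wX}(0,-1)\simeq\phi_+^*\mcO_{X_+}(1)\otimes\mcO_{\wX}(E)$, so the cokernel of the counit lives on $E$ and the direct computation goes through. On the $-$ side the quotient is $\mcO_{\wX}(-2,1)\simeq\phi_-^*\mcO_{X_-}(3)\otimes\mcO_{\wX}(2E)$, which involves $2E$ rather than $E$, so the cokernel of the counit is supported on a thickening of $E$ and the same direct computation does not apply. The paper circumvents this by first proving non-triviality for the auxiliary sequence $\xi\otimes\mcO_{\wX}(E)$ (where the twist is again by a single $E$), and then transporting the conclusion to $\xi$ itself by restricting to the common open subset $U$ of Definition~\ref{common open sub}, where $\phi_-$ is an isomorphism and hence $j_-^*\eta=j_-^*\eta'$. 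Your proposal does not anticipate this asymmetry, and ``entirely parallel computations'' would in fact break down on the $-$ side. You would need to supply an argument of this sort (or a genuinely different non-triviality criterion that treats both sides simultaneously) to close the proof.
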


\begin{proof}
Let us show (2), and the proofs of the others are similar (c.f.~\cite[Prop.~3.6]{Hara24}).
Consider the exact sequence
\[ 0 \to \phi_+^*\mcS_{X_+}(-1,0) \to \mcE_{\wX}^{\vee}(-1,0) \to \mcO_{\wX}(0,-1) \to 0. \]
Since there are isomorphisms
\begin{align*}
&R(\phi_+)_*\left(\phi_+^*\mcS_{X_+}(-1,0)\right) \simeq \mcS_{X_+}(-1), \text{~ and} \\
&R(\phi_+)_*\left(\mcO_{\wX}(0,-1)\right) \simeq R(\phi_+)_*\left(\mcO_{\wX}(1,0) \otimes \mcO_{\wX}(E) \right) \simeq  
\mcO_{X_+}(1) \otimes R(\phi_+)_*\left( \mcO_{\wX}(E) \right) \simeq \mcO_{X_+}(1),
\end{align*}
it holds that $R(\phi_+)_*\left(\mcE_{\wX}^{\vee}(-1,0)\right) \simeq (\phi_+)_*\left(\mcE_{\wX}^{\vee}(-1,0)\right)$ and this sheaf lies in the exact sequence
\begin{align} \label{nonsplit 1}
0 \to \mcS_{X_+}(-1) \to (\phi_+)_*\left(\mcE_{\wX}^{\vee}(-1,0)\right) \to \mcO_{X_+}(1) \to 0. 
\end{align}
Thus together with Lemma~\ref{pretilting}~(3), it is enough to prove that the exact sequence (\ref{nonsplit 1}) is non-trivial.
In order to see this, consider the map
\[ \varepsilon_+ \colon \Ext_{\wX}^1(\mcO_{\wX}(0,-1), \phi_+^*\mcS_{X_+}(-1,0)) \to \Ext^1(\mcO_{X_+}(1), \mcS_{X_+}(-1)) \simeq \C \]
associated to the derived functor $R(\phi_+)_*$.
Then this map $\varepsilon_+$ is an isomorphism.
Indeed, this map is obtained by applying the functor $\Ext_{\wX}^{\ast}(-, \phi_+^*\mcS_{X_+}(-1,0))$ to the counit map 
\[ \phi_+^*(\phi_+)_*\mcO_{\wX}(0,-1) \simeq \mcO_{\wX}(1,0) \to \mcO_{\wX}(0,-1) \simeq \mcO_{\wX}(1,0) \otimes \mcO_{\wX}(E), \]
which is injective and has the cokernel $\mcO_{E}(1,0) \otimes_E \mcO_{E}(E) \simeq \mcO_E(0,-1)$.
Now it holds that
\begin{align*}
&\RHom_{\wX}\left(\mcO_{E}(1,0) \otimes_E \mcO_{E}(E), \phi_+^*\mcS_{X_+}(-1,0)\right) &\\
{}\simeq{} &\RHom_{E}\left(\mcO_{E}(1,0) \otimes_E \mcO_{E}(E), \phi_+^*\mcS_{X_+}(-1,0)|_E \otimes \mcO_{E}(E)[-1]\right) &\text{(adjunction)} \\
{}\simeq{} &\RG\left(E, \pi_+^*\mcS_{X_+}(-2)|_{Z_+^0} \right)[-1] & \\
{}\simeq{} &\RG\left(Z_+, \mcS_{+}(-2) \right)[-1] & \\
{}={} &0.
\end{align*}
Therefore $\varepsilon_+$ is an isomorphism, which implies that the sequence (\ref{nonsplit 1}) is non-trivial.

The other side is slightly more involved.
First, applying the functor $R(\phi_-)_*$ to the exact sequence
\[ \xi \colon 0 \to \phi_-^*\mcS_{X_-}(-1,0) \to \mcE_{\wX}^{\vee}(-1,0) \to \mcO_{\wX}(-2,1) \to 0 \]
gives an exact sequence
\begin{align} \label{nonsplit 2}
\eta \coloneqq (\phi_-)_*\xi \colon 0 \to \mcS_{X_-}(1) \to (\phi_-)_*\left(\mcE_{\wX}^{\vee}(-1,0)\right) \to \mcO_{X_-}(3) \to 0,
\end{align}
since 
\begin{align*}
&R(\phi_-)_*\phi_-^*\mcS_-(-1,0) \simeq R(\phi_-)_*\left(\phi_-^*\mcS_-(0,1) \otimes \mcO_{\wX}(E)\right) \simeq \mcS_-(1), ~\text{and} \\
&R(\phi_-)_*\mcO_{\wX}(-2,1) \simeq R(\phi_-)_*\left(\mcO_{\wX}(0,3) \otimes \mcO_{\wX}(2E)\right) \simeq \mcO_{X_-}(3)
\end{align*}
by Lemma~\ref{push of exc div}.
In order to see that the sequence (\ref{nonsplit 2}) is non-trivial,
consider
\[ \xi \otimes \mcO_{\wX}(E) \colon 0 \to \phi_-^*\mcS_{X_-}(0,1) \to \mcE_{\wX}^{\vee}(0,1) \to \mcO_{\wX}(-1,2) \to 0. \]
As before, applying the functor $R(\phi_-)_*$ to this sequence gives an exact sequence
\begin{align} \label{nonsplit 3}
\eta' \coloneqq (\phi_-)_*(\xi \otimes \mcO(E)) \colon 0 \to \mcS_{X_-}(1) \to (\phi_-)_*\left(\mcE_{\wX}^{\vee}(0,1)\right) \to \mcO_{X_-}(3) \to 0.
\end{align}
Now the correspondence $\xi \otimes \mcO(E) \mapsto \eta'$ is given by
\[ \varepsilon_- \colon \Ext_{\wX}^1(\mcO_{\wX}(-1,2), \phi_-^*\mcS_{X_-}(0,1)) \to \Ext_{X_-}^1(\mcO_{X_-}(3), \mcS_{X_-}(1)) \simeq \C, \]
and this homomorphism is obtained by applying $\Ext^1_{\wX}(-, \phi_-^*\mcS_{X_-}(0,1))$ to
the injective counit morphism
\[ \mcO_{X_-}(3) \simeq \phi_-^*(\phi_-)_*\mcO_{\wX}(-1,2) \to \mcO_{\wX}(-1,2) \simeq \mcO_{\wX}(0,3) \otimes \mcO_{\wX}(E), \]
whose cokernel is $\mcO_E(-1,2)$.
Since
\begin{align*}
&\RHom_{\wX}(\mcO_E(-1,2), \phi_-^*\mcS_{X_-}(0,1)) \\
{}\simeq{} & \RHom_{E}(\mcO_E(-1,2), \phi_-^*\mcS_{X_-}(0,1)|_E \otimes_E \mcO_{E}(E)[-1]) &\text{(adjunction)} \\
{}\simeq{} & \RG(E, \pi_-^*\mcS_{X_-}(0,-2)|_{Z_-^0})[-1] & \\
{}\simeq{} & \RG(Z_-, \mcS_{-}(-2))[-1] & \\
{}={} & 0,
\end{align*}
the map $\varepsilon_-$ is an isomorphism, and hence $\eta'$ is a non-trivial exact sequence.

To show that $\eta$ is non-trivial, let $U$ be the common open subset (see Definition~\ref{common open sub})
with open embeddings $j_- \colon U \to X_-$ and $\widetilde{j} \colon U \to \wX$.
Since the restriction $j_-^* \colon \refl(X_-) \to \refl(U)$ is an equivalence of categories,
it follows that
\begin{enumerate}
\item[$\bullet$] the sequence $\eta$ is non-trivial if and only if the restricted sequence $j_-^*\eta$ is, and
\item[$\bullet$] the sequence $j_-^*\eta'$ is non-trivial.
\end{enumerate}
Now, since $\phi_-$ restricts to the isomorphism $\widetilde{j}(U) \to j_-(U)$,
it holds that
\[ j_-^*\eta = \widetilde{j}^*\xi = \widetilde{j}^*(\xi \otimes \mcO_{\wX}(E)) = j_-^*\eta'. \]
Therefore, the exact sequence $\eta$ is also non-trivial,
and hence the uniqueness of a non-trivial sequence implies that
$R(\phi_-)_*\left(\mcE_{\wX}^{\vee}(-1,0)\right) \simeq \mcP_-^{\vee}(1)$.
\end{proof}

\begin{lem} \label{compare bundles}
Let $U$ be the common open subset with open immersions $j_{\pm} \colon U \to X_{\pm}$.
Then the following hold.
\begin{enumerate}
\item[\rm (1)] $j_+^*\mcO_{X_+}(a) \simeq j_-^*\mcO_{X_-}(-a)$ for all $a \in \Z$.
\item[\rm (2)] $j_+^*\mcP_+(a) \simeq j_-^*\mcP_-(-a)$ for all $a \in \Z$.
\item[\rm (3)] $j_+^*\mcP^{\vee}_+(a) \simeq j_-^*\mcP_-^{\vee}(-a)$ for all $a \in \Z$.
\end{enumerate}
\end{lem}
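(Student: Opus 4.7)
The plan is to pull everything back to the common blow-up $\wX$ and exploit the fact that the line bundle $\mcO_{\wX}(E) \simeq \mcO_{\wX}(-1,-1)$ becomes canonically trivial upon restriction to the complement $\wX \setminus E$. Throughout, identify $U$ with $\wX \setminus E$ via the restriction of $\phi_{\pm}$, both of which are isomorphisms onto $X_{\pm} \setminus Z_{\pm}^0 = j_{\pm}(U)$; the two identifications agree on $U$ by the definition of the common open subset.

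For (1), one has $\phi_+^*\mcO_{X_+}(1) \simeq \mcO_{\wX}(1,0)$ and $\phi_-^*\mcO_{X_-}(1) \simeq \mcO_{\wX}(0,1)$. Since $\mcO_{\wX}(E) \simeq \mcO_{\wX}(-1,-1)$ (as noted after Definition~\ref{def model flop}) and $\mcO_{\wX}(E)|_{\wX \setminus E}$ is canonically trivial, one deduces $\mcO_{\wX}(1,0)|_{\wX\setminus E} \simeq \mcO_{\wX}(0,-1)|_{\wX\setminus E}$. Transporting this isomorphism back along the identifications yields $j_+^*\mcO_{X_+}(1) \simeq j_-^*\mcO_{X_-}(-1)$, and the general case of (1) follows by taking tensor powers.

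For (2) and (3), the key input is Proposition~\ref{prop exchange}. From part (1) there, $R(\phi_{\pm})_*\mcE_{\wX}(E) \simeq \mcP_{\pm}$, and since $\phi_{\pm}$ restricts to an isomorphism over $j_{\pm}(U)$, flat base change gives $j_{\pm}^*\mcP_{\pm} \simeq \mcE_{\wX}(E)|_{\wX \setminus E}$ under the chosen identifications. Hence $j_+^*\mcP_+ \simeq j_-^*\mcP_-$ on $U$, and the case of arbitrary $a$ in (2) follows by tensoring with $j_{\pm}^*\mcO_{X_{\pm}}(a)$ and invoking (1). The same argument combined with parts (2) and (3) of Proposition~\ref{prop exchange} produces $j_+^*\mcP_+^{\vee}(-1) \simeq \mcE_{\wX}^{\vee}(-1,0)|_{\wX \setminus E} \simeq j_-^*\mcP_-^{\vee}(1)$, which is statement (3) at $a = -1$; the general case again follows from (1).

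No genuine obstacle arises: once Proposition~\ref{prop exchange} and the formula $\mcO_{\wX}(E) \simeq \mcO_{\wX}(-1,-1)$ are in hand, the content of the lemma is purely bookkeeping, with the only mild subtlety being a careful matching of the two identifications $U \simeq \wX \setminus E$ via $\phi_+$ and $\phi_-$, which is automatic from the definition of the common open subset.
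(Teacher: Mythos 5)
Your proposal is correct and follows essentially the same route as the paper: identify $U$ with $\wX \setminus E$ via $\widetilde{j}$ and transport Proposition~\ref{prop exchange} through the birational contractions $\phi_{\pm}$. The only cosmetic difference is that for part (1) you spell out the identity $\mcO_{\wX}(1,0) \otimes \mcO_{\wX}(E) \simeq \mcO_{\wX}(0,-1)$ and the triviality of $\mcO_{\wX}(E)$ off $E$, whereas the paper simply invokes the flop property; both are standard and equivalent.
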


\begin{proof}
Note that $j_{\pm}^* \colon \refl(X_{\pm}) \to \refl(U)$ is an equivalence of categories.
Now (1) holds since $X_+ \dashrightarrow X_-$ is a flop.
(2) and (3) follow from Proposition~\ref{prop exchange}, since there is an open immersion $\widetilde{j} \colon U \to \wX$
such that $\phi_{\pm} \circ \widetilde{j} = j_{\pm}$.
\end{proof}

\subsection{The second tilting bundles}

\begin{thm} \label{thm 2nd tilting}
The bundles
\begin{align*}
\mcT^{\flat}_{+} &\coloneqq  \mcO_{X_{+}}(-2) \oplus  \mcO_{X_{+}}(-1)  \oplus \mcP_+^{\vee} \oplus \mcO_{X_{+}}  \oplus   \mcO_{X_{+}}(1) \oplus \mcP_+(1) \oplus \mcO_{X_{+}}(2) \oplus \mcO_{X_{+}}(3), ~ \text{and} \\
\mcT^{\sharp}_{-} &\coloneqq  \mcO_{X_{-}}(-3) \oplus \mcO_{X_{-}}(-2) \oplus \mcP_-^{\vee}(-1)  \oplus \mcO_{X_{-}}(-1)  \oplus   \mcO_{X_{-}} \oplus \mcP_- \oplus \mcO_{X_{-}}(1) \oplus \mcO_{X_{-}}(2)
\end{align*}
are tilting bundles over $X_+$ and $X_-$, respectively.
\end{thm}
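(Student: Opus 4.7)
The plan is to mirror the proof of Theorem~\ref{thm 1st tilting}: first establish that $\mcT_+^{\flat}$ is a tilting bundle over $X_+$, then deduce the statement for $\mcT_-^{\sharp}$ via the same non-canonical identification $\widetilde{\varphi}\colon X_- \xrightarrow{\sim} X_+$ (coming from $\varphi\colon Z_- \xrightarrow{\sim} Z_+$ with $\varphi^*\mcS_+ \simeq \mcS_-$) used there. Because $\widetilde{\varphi}^*\mcS_{X_+} \simeq \mcS_{X_-}$, uniqueness of the non-trivial extension~(\ref{def of P}) forces $\widetilde{\varphi}^*\mcP_+ \simeq \mcP_-$, and a summand-by-summand comparison then yields $\widetilde{\varphi}^*\mcT_+^{\flat} \simeq \mcT_-^{\sharp} \otimes \mcO_{X_-}(1)$, which reduces the $X_-$-statement to the $X_+$-statement.

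For generation I would exploit the second exceptional collection~(\ref{2nd exc coll}). Twisted by $\mcO(1)$ it reads
\[ \left\langle \mcO(-2),\, \mcO(-1),\, \mcS_+,\, \mcO,\, \mcO(1),\, \mcS^{\vee}_+(1),\, \mcO(2),\, \mcO(3) \right\rangle, \]
whose pullback to $X_+$ generates $\D(\Qcoh X_+)$ by \cite[Lemma~3.1]{Hara21}. The dual of~(\ref{def of P}), namely $0 \to \mcS_+ \to \mcP_+^{\vee} \to \mcO(2) \to 0$, together with~(\ref{def of P}) twisted by $\mcO(1)$, places $\mcS_{X_+}$ and $\mcS^{\vee}_{X_+}(1)$ in the triangulated subcategory $\langle \mcT_+^{\flat}\rangle$, so $\mcT_+^{\flat}$ is a generator.

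For pretilting I would split the required $\Ext$-vanishings into three layers. First, pure line-bundle pairs $\Ext^i(\mcO(a), \mcO(b)) = H^i(X_+, \mcO(b-a))$ with $a, b \in \{-2, \ldots, 3\}$ are covered by Lemma~\ref{pretilting}(1). Second, mixed pairs between a line bundle and one of $\mcP_+^{\vee}, \mcP_+(1)$ reduce, via the two defining sequences, to Lemma~\ref{pretilting}(1),(2),(4), with Lemma~\ref{semiuniv ext}(1) absorbing the one direction in which the unique non-trivial extension $H^1(X_+, \mcS_{X_+}(-2)) = \C$ appears. Third, the cross-terms $\Ext^i(\mcP_+^{\vee}, \mcP_+(1))$ and $\Ext^i(\mcP_+(1), \mcP_+^{\vee})$ split, via double splicing, into four diagonal $\Ext$-groups between $\{\mcS_+, \mcO(2)\}$ and $\{\mcO(-1), \mcS_+^{\vee}(1)\}$; three of these four vanish immediately by Lemma~\ref{pretilting}, leaving
\[ H^i(X_+, \mcS^{\vee}_{X_+}\otimes \mcS^{\vee}_{X_+}(1)) \quad\text{and}\quad H^i(X_+, \mcS_{X_+}\otimes \mcS_{X_+}(-1)). \]

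The main obstacle is the vanishing of these two last groups for $i > 0$. By the projection formula they equal $\bigoplus_{k \geq 0} H^i(Z_+, \Sym^k\mcS \otimes \mcS^{\vee} \otimes \mcS^{\vee} \otimes \mcO(2k+1))$ and $\bigoplus_{k \geq 0} H^i(Z_+, \Sym^k\mcS \otimes \mcS \otimes \mcS \otimes \mcO(2k-1))$ respectively --- two cases not recorded in Lemma~\ref{BBW}, sitting exactly opposite to the twists covered by its parts~(7) and~(6). I would handle them by the same two-step recipe already used in Lemma~\ref{BBW}: first decompose $\mcS^{\vee}\otimes\mcS^{\vee}$ (resp.~$\mcS\otimes\mcS$) and then $\Sym^k\mcS \otimes (\cdot)$ via the Littlewood--Richardson rule for $\GL(4)$, translate each resulting summand through~(\ref{interpret weight}) into a weight of $L_{D_4}$, and verify that every such weight is either dominant with no higher cohomology, or singular under the dotted Weyl action. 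Once these two auxiliary Borel--Bott--Weil vanishings are established, the cross-terms vanish and $\mcT_+^{\flat}$ is tilting, which by the symmetry argument above completes the proof.
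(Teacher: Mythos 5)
Your strategy mirrors the paper's at a structural level (prove one side, transfer by the non-canonical identification $\widetilde{\varphi}$; use the second exceptional collection~(\ref{2nd exc coll}) for generation; reduce cross-terms to atomic $\Ext$-groups via the defining sequences), and the generation argument and the mixed line-bundle pairs are handled correctly. However, there is a genuine gap in the final step.

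You assert that after reducing the two cross-terms to $H^{>0}(X_+,\mcS^{\vee}_{X_+}\otimes\mcS^{\vee}_{X_+}(1))$ and $H^{>0}(X_+,\mcS_{X_+}\otimes\mcS_{X_+}(-1))$, you expect the remaining Borel--Bott--Weil weights to be ``either dominant with no higher cohomology, or singular.'' This is false for the second group. Via the projection formula it equals $\bigoplus_{k\geq 0} H^i(\Q^6,\Sym^k\mcS\otimes\mcS\otimes\mcS\otimes\mcO(2k-1))$, and at $k=1$ one encounters the weight $\left(-\tfrac12,\tfrac32,\tfrac12,\tfrac12\right)$ with $s_1\circ\left(-\tfrac12,\tfrac32,\tfrac12,\tfrac12\right)=\left(\tfrac12,\tfrac12,\tfrac12,\tfrac12\right)$, which is regular and lies at distance one from the dominant chamber: this produces a \emph{nonzero} $H^1$. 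Thus the diagonal splicing gives $\Ext^i_{X_+}(\mcP_+(1),\mcP_+^{\vee})=0$ only for $i>1$, and the method as you describe it cannot close the $i=1$ case (the first cross-term direction, $\mcS^{\vee}\otimes\mcS^{\vee}$, is fine — all weights there are genuinely dominant). The paper handles precisely this residual $\Ext^1$ by a transfer across the flop: by Lemma~\ref{loc coho prop} the relevant $\Ext^1$ is computed on the common open subset $U$, which by Lemma~\ref{compare bundles} matches the corresponding $\Ext^1$ on the other birational model, and that vanishing is already secured by Theorem~\ref{thm 1st tilting}. Your proposal needs this extra step (or an equivalent local-cohomology argument) and does not currently contain it; note that the Borel--Bott--Weil approach alone cannot supply the needed vanishing, since $H^1$ of the relevant pushforward summand genuinely fails to vanish.
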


As a preparation, let us show the following lemma.

\begin{lem} \label{BBW2}
The following hold.
\begin{enumerate}
\item[\rm (1)] $H^i(\Q^6, \Sym^k \mcS \otimes \mcS \otimes \mcS \otimes \mcO(2k-1)) = 0$ for all $i > 1$ and $k \geq 0$.
\item[\rm (2)] $H^i(\Q^6, \Sym^k \mcS \otimes \mcS^{\vee} \otimes \mcS^{\vee} \otimes \mcO(2k+1)) = 0$ for all $i > 0$ and $k \geq 0$.
\end{enumerate}
In addition, the above implies the following.
\begin{enumerate}
\item[\rm (I)] $\Ext_{X_{-}}^i(\mcS^{\vee}_{X_{-}}, \mcS_{X_{-}}(-1)) = 0$ for all $i>1$.
\item[\rm (II)] $\Ext_{X_{-}}^i(\mcS_{X_{-}}(-1), \mcS^{\vee}_{X_{-}}) = 0$ for all $i>0$.
\end{enumerate}
\end{lem}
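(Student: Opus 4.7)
The plan is to mirror the strategy used in the proof of Lemma~\ref{BBW}: decompose each tensor product as a direct sum of irreducible homogeneous bundles via the Littlewood--Richardson rule on $\GL(4)$-weights, translate weights into $L_{D_4}$ using $\varphi$, and apply the Borel--Bott--Weil theorem. The key observation is that the decompositions of $\Sym^k \mcS \otimes \mcS \otimes \mcS$ and $\Sym^k \mcS \otimes \mcS^{\vee} \otimes \mcS^{\vee}$ are already displayed in the proof of Lemma~\ref{BBW}(6) and (7), so no new Littlewood--Richardson computation is needed; twisting by $\mcO(2k-1) = \mcO(2k+1) \otimes \mcO(-2)$ (resp.\ $\mcO(2k+1) = \mcO(2k-1) \otimes \mcO(2)$) only shifts the first coordinate of each weight by $-2$ (resp.\ $+2$). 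The whole argument therefore reduces to case-checking of the resulting weights under Borel--Bott--Weil.

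For (2), I would show that after adding $2$ to the first coordinate of each summand in the list for Lemma~\ref{BBW}(7), every resulting weight is dominant. For $k \geq 2$ this reduces to the evident inequalities $(3k+4)/2 \geq (k+2)/2$ and $(k+2)/2 \geq \lvert (k-2)/2 \rvert$, while the cases $k = 0, 1$ are handled by direct inspection. Dominance forces the vanishing of all higher cohomology, yielding (2).

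For (1), one subtracts $2$ from the first coordinate of each summand in the list for Lemma~\ref{BBW}(6) and shows that each resulting weight is either dominant, singular (i.e.\ $\alpha + \rho$ is orthogonal to some positive root, with $\rho = (3,2,1,0)$), or regular and mapped to a dominant weight by exactly one reflection in the dotted action. For $k = 0$ and $k = 2$ all summands turn out to be dominant or singular, and for $k \geq 3$ all of them are dominant. The only remaining case is $k = 1$: there the two copies of $\left(-\frac{1}{2}, \frac{3}{2}, \frac{1}{2}, \frac{1}{2}\right)$ are regular and sent to the dominant weight $\left(\frac{3}{2}, \frac{1}{2}, \frac{1}{2}, \frac{1}{2}\right)$ by the simple reflection $s_1$, contributing only to $H^1$, while the two remaining summands at $k=1$ are singular. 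Altogether $H^i = 0$ for $i > 1$.

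Statements (I) and (II) follow from (1) and (2) by the same projection formula argument used in the proof of Lemma~\ref{pretilting}: for vector bundles $\mcK, \mcL$ on $Z_{-}$ one has
\[ \Ext^i_{X_{-}}(\mcK_{X_{-}}, \mcL_{X_{-}}) \simeq \bigoplus_{k \geq 0} H^i\bigl(Z_{-}, \Sym^k(\mcS_{-}(2)) \otimes \mcK^{\vee} \otimes \mcL\bigr). \]
Taking $(\mcK, \mcL) = (\mcS_{-}^{\vee}, \mcS_{-}(-1))$ reduces (I) to (1), and $(\mcK, \mcL) = (\mcS_{-}(-1), \mcS_{-}^{\vee})$ reduces (II) to (2). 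The main obstacle I anticipate is the weight analysis in (1), particularly at $k = 1, 2$, where singularity must be detected against non-simple positive roots such as $e_1 - e_3$ and $e_1 - e_4$, not only against the simple ones $\alpha_i$, to rule out any $H^{\geq 2}$ contribution.
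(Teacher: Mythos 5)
Your proposal is correct and follows essentially the same approach as the paper: decompose the tensor products into irreducible homogeneous bundles via Littlewood--Richardson, translate the resulting $\GL(4)$-weights into $L_{D_4}$, apply Borel--Bott--Weil to check dominance/singularity (finding that only $\left(-\frac{1}{2},\frac{3}{2},\frac{1}{2},\frac{1}{2}\right)$ at $k=1$ contributes, and only to $H^1$), and reduce (I) and (II) to (1) and (2) via the projection formula as in the proof of Lemma~\ref{pretilting}. Your remark that the decompositions from Lemma~\ref{BBW}(6)--(7) can simply be reused after shifting the first coordinate by $\mp 2$ is a small streamlining of what the paper does by recomputing.
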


\begin{proof}
As in Section~\ref{rep of D}, fix an identification $\Q^6 \simeq \OG(1,8)$.
Then as in the proof of Lemma~\ref{BBW},
\begin{align*}
%%%
&\Sym^k \mcS \otimes \mcS \otimes \mcS \otimes \mcO(2k-1) \\
{}\simeq{} 
&\begin{cases}
\left(-2,1,1,1\right) \oplus \left(-2,1,0,0\right)
& \text{(if $k = 0$)} \\
\left(-\frac{1}{2}, \frac{3}{2}, \frac{3}{2}, \frac{3}{2}\right) \oplus \left(-\frac{1}{2}, \frac{3}{2}, \frac{1}{2}, \frac{1}{2}\right)^{\oplus 2}
\oplus \left(-\frac{1}{2}, \frac{1}{2}, \frac{1}{2}, -\frac{1}{2}\right)
& \text{(if $k = 1$)} \\
\left(\frac{3k-4}{2}, \frac{k+2}{2}, \frac{k+2}{2}, \frac{k+2}{2}\right) 
\oplus \left(\frac{3k-4}{2}, \frac{k+2}{2}, \frac{k}{2}, \frac{k}{2}\right)^{\oplus 2} \oplus \left(\frac{3k-4}{2}, \frac{k}{2}, \frac{k}{2}, \frac{k-2}{2}\right)
\oplus \left(\frac{3k-4}{2}, \frac{k+2}{2}, \frac{k-2}{2}, \frac{k-2}{2}\right)
& \text{(if $k \geq 2$).}
\end{cases} 
\end{align*}
A weight that is neither dominant nor singular appears only when $k = 1$, and that weight is $\left(-\frac{1}{2}, \frac{3}{2}, \frac{1}{2}, \frac{1}{2}\right)$.
Since 
\[ s_1 \circ \left(-\frac{1}{2}, \frac{3}{2}, \frac{1}{2}, \frac{1}{2}\right) = \left(\frac{1}{2}, \frac{1}{2}, \frac{1}{2}, \frac{1}{2}\right), \]
$H^i(\mcS \otimes \mcS \otimes \mcS \otimes \mcO(-1)) = 0$ for all $i \neq 1$.
This proves (1).
For (2), similarly one has
\begin{align*}
&\Sym^k \mcS \otimes \mcS^{\vee} \otimes \mcS^{\vee} \otimes \mcO(2k+1) \\
{}\simeq{} 
&\begin{cases}
\left(2,1,1,-1\right) \oplus \left(2,1,0,0\right)
& \text{(if $k = 0$)} \\
\left(\frac{7}{2}, \frac{3}{2}, \frac{3}{2}, -\frac{1}{2}\right) \oplus \left(\frac{7}{2}, \frac{3}{2}, \frac{1}{2}, \frac{1}{2}\right)
\oplus \left(\frac{7}{2}, \frac{1}{2}, \frac{1}{2}, -\frac{1}{2}\right)^{\oplus 2}
& \text{(if $k = 1$)} \\
\left(\frac{3k+4}{2}, \frac{k+2}{2}, \frac{k+2}{2}, \frac{k-2}{2}\right) \oplus \left(\frac{3k+4}{2}, \frac{k+2}{2}, \frac{k}{2}, \frac{k}{2}\right)
\oplus \left(\frac{3k+4}{2}, \frac{k}{2}, \frac{k}{2}, \frac{k-2}{2}\right)^{\oplus 2}
\oplus \left(\frac{3k+4}{2}, \frac{k-2}{2}, \frac{k-2}{2}, \frac{k-2}{2}\right)
& \text{(if $k \geq 2$).}
\end{cases}
\end{align*}
In this case, all weights are dominant, and hence (2) follows.
The remaining statements (I) and (II) follows as Lemma~\ref{pretilting}.
\end{proof}

\begin{proof}[Proof of Theorem~\ref{thm 2nd tilting}]
Let us show that $\mcT_-^{\sharp}$ is tilting.
Then the statement for $\mcT_+^{\flat}$ follows by choosing a (non-canonical) identification $Z_+ \simeq Z_-$
that induces an isomorphism $X_+ \simeq X_-$.

First, let us check that $\mcT_-^{\sharp}$ is a generator.
It is enough to show that the triangulated subcategory $\langle \mcT_-^{\sharp} \rangle \subset \Db(\coh X_-)$ generated by $\mcT_-^{\sharp}$ contains a generator.
Actually, this category contains the bundle $\pi_-^*\mcU'$, where
\[ \mcU' = \mcO(-3) \oplus \mcO(-2) \oplus \mcS_-(-1) \oplus \mcO(-1) \oplus \mcO \oplus \mcS_-^{\vee} \oplus \mcO(1) \oplus \mcO(2). \]
Since $\mcU'$ is the direct sum of a full exceptional collection (\ref{2nd exc coll}) in $\Db(\coh Z_-)$,
it is a generator of $\D(\Qcoh Z_-)$.
Therefor $\pi_-^* \mcU'$ is a generator of $\D(\Qcoh X_-)$ by \cite[Lemma~3.1]{Hara21}, and hence $\mcT^{\sharp}_{-}$ is also a generator.

Next, let us show that $\mcT_-^{\sharp}$ is pretilting.
Together with the vanishings contained in the statement of Theorem~\ref{thm 1st tilting}, 
it is enough to show that the following $\Ext$ groups are all zero.
\begin{enumerate}
\item[(1)] $\bigoplus_{i \geq 1} \Ext_{X_-}^{i}(\mcP_-^{\vee}(-1), \mcO_{X_-}(2)) \simeq \bigoplus_{i \geq 1} \Ext_{X_-}^{i}(\mcO_{X_-}(-3), \mcP_-)$.
\item[(2)] $\bigoplus_{i \geq 1} \Ext_{X_-}^{i}(\mcO_{X_-}(2), \mcP_-^{\vee}(-1)) \simeq \bigoplus_{i \geq 1} \Ext_{X_-}^{i}(\mcP_-, \mcO_{X_-}(-3))$.
\item[(3)] $\bigoplus_{i \geq 1} \Ext_{X_-}^{i}(\mcP_-^{\vee}(-1), \mcP_-)$.
\item[(4)] $\bigoplus_{i \geq 1} \Ext_{X_-}^{i}(\mcP_-, \mcP_-^{\vee}(-1))$.
\end{enumerate}

For (1), applying $\Ext^*_{X_-}(\mcO_{X_-}(-3), -)$ to (\ref{def of P}) and then using Lemma~\ref{pretilting} proves the desired vanishing.

For (2), there are isomorphisms
\begin{align*}
&\Ext_{X_-}^{i}(\mcO_{X_-}(2), \mcP_-^{\vee}(-1)) \\
\simeq{} & H^{i}(X_-, \mcP_-^{\vee}(-3)) \\
\simeq{} & H^{i}\left(X_-, R(\phi_-)_*\left(\mcE_{\wX}^{\vee}(0,-3)\right)\right) ~\text{~ (by Prop.~\ref{prop exchange}~(3) and the projection formula)} \\
\simeq{} & H^{i}\left(\wX, \mcE_{\wX}^{\vee}(0,-3)\right).
\end{align*}
The bundle $\mcE_{\wX}^{\vee}(0,-3)$ lies in an exact sequence
\begin{align} \label{send tilt ex 1}
0 \to \phi_+^*\mcS_{X_+}(3,0) \otimes \mcO_{\wX}(3E) \to \mcE_{\wX}^{\vee}(0,-3) \to \mcO_{\wX}(5,0) \otimes \mcO_{\wX}(4E) \to 0.
\end{align}
Now using Lemma~\ref{push of exc div} gives
\begin{align*}
H^i(\wX, \phi_+^*\mcS_{X_+}(3,0) \otimes \mcO_{\wX}(3E) ) &\simeq H^i(X_+, \mcS_{X_+}(3) \otimes R(\phi_+)_*\mcO_{\wX}(3E)) \\
&\simeq H^i(X_+, \mcS_{X_+}(3)),
\end{align*}
and this is zero for all $i > 0$ by Lemma~\ref{pretilting}~(2).
Similarly, one has
\begin{align*}
H^i(\wX, \mcO_{\wX}(5,0) \otimes \mcO_{\wX}(4E)) &\simeq H^i(X_+, \mcO_{X_+}(5) \otimes R(\phi_+)_*\mcO_{\wX}(4E)), 
\end{align*}
and again by Lemma~\ref{push of exc div}, the complex $\mcO_{X_+}(5) \otimes R(\phi_+)_*\mcO_{\wX}(4E)$ fits in an exact triangle
\[ \mcO_{X_{+}}(5) \to \mcO_{X_+}(5) \otimes R(\phi_{+})_* \mcO_{\wX}(4E) \to \mcO_{Z_{+}^0}(-1)[-3] \to \mcO_{X_{+}}(5)[1]. \]
Since $H^i(X_+, \mcO_{X_{+}}(5)) = 0$ for all $i > 0$ and $\RG(X_+, \mcO_{Z_{+}^0}(-1)) = 0$, it follows that
$H^i(X_+, \mcO_{X_+}(5) \otimes R(\phi_+)_*\mcO_{\wX}(4E)) = 0$ for all $i > 0$.
Therefore, the sequence (\ref{send tilt ex 1}) yields that 
\begin{align*}
\Ext_{X_-}^{i}(\mcO_{X_-}(2), \mcP_-^{\vee}(-1)) \simeq H^{i}\left(\wX, \mcE_{\wX}^{\vee}(0,-3)\right) = 0
\end{align*}
for all $i > 0$.

For (3), consider the following two exact sequences
\begin{center}
$0 \to \mcS_{X_-}(-1) \to \mcP_-^{\vee}(-1) \to \mcO_{X_-}(1) \to 0$ and 
$0 \to \mcO_{X_-}(-2) \to \mcP_- \to \mcS_{X_-}^{\vee} \to 0$.
\end{center}
Since Lemma~\ref{pretilting} and Lemma~\ref{BBW2} show that
\begin{enumerate}
\item[$\bullet$] $\Ext_{X_-}^i(\mcO_{X_-}(1), \mcO_{X_-}(-2)) = 0$ for all $i>0$,
\item[$\bullet$] $\Ext_{X_-}^i(\mcO_{X_-}(1), \mcS_{X_-}^{\vee}) = 0$ for all $i > 0$,
\item[$\bullet$] $\Ext_{X_-}^i(\mcS_{X_-}(-1), \mcO_{X_-}(-2)) = 0$ for all $i > 0$, and
\item[$\bullet$] $\Ext_{X_-}^i(\mcS_{X_-}(-1), \mcS_{X_-}^{\vee}) = 0$ for all $i > 0$, 
\end{enumerate}
it holds that
\begin{enumerate}
\item[$\diamond$] $\Ext_{X_-}^i(\mcO_{X_-}(1), \mcP_-) = 0$ for all $i > 0$ and
\item[$\diamond$] $\Ext_{X_-}^i(\mcS_{X_-}(-1), \mcP_-) = 0$ for all $i > 0$.
\end{enumerate}
Therefore $\Ext_{X_-}^{i}(\mcP_-^{\vee}(-1), \mcP_-) = 0$ for all $i > 0$, as desired.

Let us prove (4). Consider the same exact sequences as (3).
Since, by Lemma~\ref{pretilting} and Lemma~\ref{BBW2},
\begin{enumerate}
\item[$\bullet$] $\Ext_{X_-}^i(\mcO_{X_-}(-2), \mcO_{X_-}(1)) = 0$ for all $i > 0$,
\item[$\bullet$] $\Ext_{X_-}^i(\mcO_{X_-}(-2), \mcS_{X_-}(-1)) = 0$ for all $i > 0$,
\item[$\bullet$] $\Ext_{X_-}^i(\mcS_{X_-}^{\vee}, \mcO_{X_-}(1)) = 0$ for all $i > 0$, and
\item[$\bullet$] $\Ext_{X_-}^i(\mcS_{X_-}^{\vee}, \mcS_{X_-}(-1)) = 0$ for all $i > 1$,
\end{enumerate}
it follows that  
\begin{enumerate}
\item[$\diamond$] $\Ext_{X_-}^i(\mcO_{X_-}(-2), \mcP_-^{\vee}(-1)) = 0$ for all $i > 0$ and
\item[$\diamond$] $\Ext_{X_-}^i(\mcS_{X_-}^{\vee}, \mcP_-^{\vee}(-1)) = 0$ for all $i > 1$,
\end{enumerate}
and therefore $\Ext_{X_-}^{i}(\mcP_-, \mcP_-^{\vee}(-1)) = 0$ for all $i > 1$.
Thus it remains to show that $\Ext_{X_-}^{1}(\mcP_-, \mcP_-^{\vee}(-1))$ is zero.
This can be shown as
\begin{align*}
\Ext_{X_-}^{1}(\mcP_-, \mcP_-^{\vee}(-1)) &\simeq \Ext_{U}^{1}(j_-^*\mcP_-, j_-^*\mcP_-^{\vee}(-1)) &\text{(by Lemma~\ref{loc coho prop})} \\
&\simeq \Ext_{U}^{1}(j_+^*\mcP_+, j_+^*\mcP_+^{\vee}(1)) &\text{(by Lemma~\ref{compare bundles})} \\
&\simeq \Ext_{X_+}^{1}(\mcP_+, \mcP_+^{\vee}(1)) &\text{(by Lemma~\ref{loc coho prop})} \\
&= 0.  &\text{(by Theorem~\ref{thm 1st tilting})}
\end{align*}
This completes the proof.
\end{proof}

\subsection{Derived equivalence} \label{sect: D-eq}

\begin{prop} \label{tilting exchange}
There are isomorphisms
\begin{center}
$(f_+)_*\mcT^{\sharp}_+ \simeq (f_-)_*\mcT_-^{\sharp}$ and $(f_+)_*\mcT_+^{\flat} \simeq (f_-)_*\mcT_-^{\flat}$
\end{center}
of $R$-modules.
\end{prop}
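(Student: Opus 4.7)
The plan is to exhibit both sides as reflexive $R$-modules that agree on a big open subset of $\Spec R$, and then to invoke the standard principle that a reflexive sheaf on a normal scheme is determined by its restriction to the complement of a codimension $\geq 2$ closed subset.

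First, I would record that $\mcO_{X_{\pm}}$ is a direct summand of each of $\mcT^{\sharp}_{\pm}$ and $\mcT^{\flat}_{\pm}$. By the basic property of tilting bundles stated in the Proposition of Section~2.1 (part (2)(c)), this immediately implies that all four of the pushforwards $(f_\pm)_*\mcT^{\sharp}_\pm$ and $(f_\pm)_*\mcT^{\flat}_\pm$ are Cohen-Macaulay $R$-modules, and hence reflexive (using that $R$ is a normal Gorenstein domain).

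Next, I would restrict everything to the common open subset $U$ of $X_\pm$ from Definition~\ref{common open sub}. Both contractions $f_\pm$ restrict to the same isomorphism $U \xrightarrow{\sim} V$, where $V \subset \Spec R$ is the complement of the unique singular point; since $\dim \Spec R = 10$, the complement $\Spec R \setminus V$ has codimension $\geq 2$. Applying the comparison identities of Lemma~\ref{compare bundles} summand by summand, the eight summands $\mcO_{X_+}(-2), \mcO_{X_+}(-1), \mcO_{X_+}, \mcP_+, \mcP_+^\vee(1), \mcO_{X_+}(1), \mcO_{X_+}(2), \mcO_{X_+}(3)$ of $\mcT^{\sharp}_+$ restrict on $U$ to the eight summands of $\mcT^{\sharp}_-$ (up to reordering, via $a \mapsto -a$), and the analogous bookkeeping works for $\mcT^{\flat}_\pm$. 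Hence $j_+^* \mcT^{\sharp}_+ \simeq j_-^* \mcT^{\sharp}_-$ and $j_+^* \mcT^{\flat}_+ \simeq j_-^* \mcT^{\flat}_-$ as vector bundles over $U$.

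Pushing these isomorphisms forward along the identification $f_\pm|_U \colon U \xrightarrow{\sim} V$ produces isomorphisms $(f_+)_*\mcT^{\bullet}_+|_V \simeq (f_-)_*\mcT^{\bullet}_-|_V$ for $\bullet \in \{\sharp, \flat\}$. Since both sides are reflexive $R$-modules and $\Spec R \setminus V$ has codimension $\geq 2$, these extend uniquely to isomorphisms of $R$-modules over all of $\Spec R$, which is the claim of the proposition. I do not expect a real obstacle here: the substantive step is simply the summand-matching on $U$ via Lemma~\ref{compare bundles}, while the extension from $V$ to $\Spec R$ is formal from reflexivity of both sides (and is the same device already used inside the proof of Theorem~\ref{thm 2nd tilting}).
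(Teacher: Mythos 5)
Your proposal is correct and follows the same route as the paper's proof: compare the two pushforwards on the big open $V \subset \Spec R$ using Lemma~\ref{compare bundles} summand by summand, and then extend the isomorphism across the codimension $\geq 2$ complement by reflexivity. The one small divergence is how you establish reflexivity of the pushforwards: you route through the Cohen--Macaulay property (Proposition~2.1(2)(c), using that $\mcO_{X_\pm}$ is a summand of each tilting bundle), whereas the paper instead observes directly that $(f_\pm)_*$ restricts to an equivalence $\refl(X_\pm) \xrightarrow{\sim} \refl(R)$ because $f_\pm$ is an isomorphism in codimension one, which immediately gives reflexivity of the pushforward of any locally free sheaf; your detour is valid but slightly heavier than necessary, and the paper's version has the advantage of packaging the entire argument as commutativity of a small diagram of equivalences of reflexive-sheaf categories.
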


\begin{proof}
%Let $\refl(Y)$ denote the category of reflexive coherent sheaves over a normal scheme $Y$.
Note that there is an open immersion $\bar{j} \colon U \to \Spec R$ such that $\bar{j} = f_{\pm} \circ j_{\pm}$.
Since $f_{\pm} \colon X_{\pm} \to \Spec R$ are flopping contractions, they are isomorphic in codimension one,
and hence the push-forwards $(f_{\pm})_*$ give equivalences of categories that commute the following diagram.
\[
\begin{tikzcd}
\refl(X_+) \arrow[r, "(f_+)_*"', "\simeq"] \arrow[dr, "j_+^*"', "\simeq"] & \refl(R) \arrow[d, "\bar{j}^*"', "\simeq"] & \refl(X_-)  \arrow[l, "(f_-)_*", "\simeq"'] \arrow[dl, "j_-^*", "\simeq"'] \\
& \refl(U) &
\end{tikzcd}
\]
Now the result follows from Lemma~\ref{compare bundles}.
\end{proof}

\begin{cor} \label{NCCR alg isom}
There are natural isomorphisms of $R$-algebras
\begin{align*}
&\Lambda^{\sharp} \coloneqq \End_{X_+}(\mcT_+^{\sharp}) \simeq \End_R((f_+)_*\mcT_+^{\sharp}) \simeq \End_R((f_-)_*\mcT_-^{\sharp}) \simeq \End_{X_-}(\mcT_-^{\sharp}), ~ \text{and} \\
&\Lambda^{\flat} \coloneqq \End_{X_+}(\mcT_+^{\flat}) \simeq \End_R((f_+)_*\mcT_+^{\flat}) \simeq \End_R((f_-)_*\mcT_-^{\flat}) \simeq \End_{X_-}(\mcT_-^{\flat}).
\end{align*}
\end{cor}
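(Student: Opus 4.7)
The plan is to assemble three ingredients that have already been set up: the general proposition on tilting bundles and NCCRs from Section~2, the fact that all our tilting bundles contain the structure sheaf as a direct summand, and Proposition~\ref{tilting exchange}.

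First, I would invoke part~(2)(a) of the proposition on tilting bundles separately on each side of the flop. Applied to the crepant resolution $f_+ \colon X_+ \to \Spec R$ and to the tilting bundle $\mcT_+^{\sharp}$, it immediately gives an isomorphism of $R$-algebras $\End_{X_+}(\mcT_+^{\sharp}) \simeq \End_R((f_+)_*\mcT_+^{\sharp})$, and an analogous statement with $+$ replaced by $-$ (and also with $\sharp$ replaced by $\flat$). To avoid passing to the reflexive hull $((f_\pm)_*\mcT_\pm)^{\vee\vee}$, I would note that by construction each of the bundles $\mcT_+^{\sharp}$, $\mcT_-^{\sharp}$, $\mcT_+^{\flat}$, $\mcT_-^{\flat}$ contains $\mcO_{X_\pm}$ as a direct summand; part~(2)(c) of the same proposition then ensures that $(f_\pm)_*\mcT_\pm^{\sharp}$ and $(f_\pm)_*\mcT_\pm^{\flat}$ are already Cohen-Macaulay, in particular reflexive, so the reflexive hull is redundant.

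It then remains to produce the middle $R$-algebra isomorphism bridging the two sides of the flop. This is the content of Proposition~\ref{tilting exchange}, which furnishes isomorphisms of $R$-modules
\[ (f_+)_*\mcT_+^{\sharp} \simeq (f_-)_*\mcT_-^{\sharp} \quad \text{and} \quad (f_+)_*\mcT_+^{\flat} \simeq (f_-)_*\mcT_-^{\flat}. \]
Taking endomorphism algebras of the left-hand sides and right-hand sides yields the remaining $R$-algebra isomorphisms $\End_R((f_+)_*\mcT_+^{\sharp}) \simeq \End_R((f_-)_*\mcT_-^{\sharp})$ and likewise for $\flat$, which, chained with the isomorphisms from the previous paragraph, gives the desired four-term chains.

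There is essentially no obstacle here: all the real work has already been done in Theorems~\ref{thm 1st tilting} and~\ref{thm 2nd tilting} (producing tilting bundles that contain $\mcO$) and in Proposition~\ref{tilting exchange} (matching pushforwards across the flop via the equivalence $\refl(X_\pm) \simeq \refl(U)$). The corollary itself is a formal consequence, and the proof should fit in just a few lines.
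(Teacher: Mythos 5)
Your argument is correct and is exactly the (implicit) argument the paper intends: the corollary is stated without proof precisely because it follows by chaining part~(2)(a) of the tilting-bundle proposition on each side with Proposition~\ref{tilting exchange} in the middle, and then taking $\End_R$. One small remark: the detour through part~(2)(c) to argue that the reflexive hull is redundant is unnecessary, since part~(2)(a) already asserts the isomorphism $\End_{X_\pm}(\mcT_\pm) \simeq \End_R((f_\pm)_*\mcT_\pm)$ directly, without requiring one to pass through $((f_\pm)_*\mcT_\pm)^{\vee\vee}$. The observation that the pushforwards are Cohen--Macaulay is true and harmless, but not needed for this corollary.
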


\begin{thm}
There are equivalences of $R$-linear triangulated categories
\begin{align*}
\Phi^{\sharp} &\colon \Db(\coh X_+) \xrightarrow{\RHom(\mcT_+^{\sharp},-)} \Db(\modu \Lambda^{\sharp}) \xrightarrow{\mcT_-^{\sharp} \otimes -} \Db(\coh X_-), \text{~ and} \\
\Phi^{\flat} &\colon \Db(\coh X_+) \xrightarrow{\RHom(\mcT_+^{\flat},-)} \Db(\modu \Lambda^{\flat}) \xrightarrow{\mcT_-^{\flat} \otimes -} \Db(\coh X_-).
\end{align*}
Both $\Phi^{\sharp}$ and $\Phi^{\flat}$ send $\mcO_{X_+}(a)$ to $\mcO_{X_-}(-a)$, where $-2 \leq a \leq 3$.
\end{thm}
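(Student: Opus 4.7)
The plan is to invoke the standard tilting-bundle machinery together with the identifications already established in Corollary~\ref{NCCR alg isom} and Proposition~\ref{tilting exchange}. Concretely, since $\mcT_+^{\sharp}$ is a tilting bundle on $X_+$ (Theorem~\ref{thm 1st tilting}), the first factor $\RHom_{X_+}(\mcT_+^{\sharp},-)$ is an equivalence by Proposition~2.3(1). Similarly, since $\mcT_-^{\sharp}$ is tilting on $X_-$ (Theorem~\ref{thm 2nd tilting}), the inverse equivalence $\Db(\modu\End_{X_-}(\mcT_-^{\sharp}))\xrightarrow{\sim}\Db(\coh X_-)$ is given by $\mcT_-^{\sharp}\otimes^{\LL}_{\End_{X_-}(\mcT_-^{\sharp})}-$. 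These two equivalences can be composed precisely because Corollary~\ref{NCCR alg isom} identifies the source and target algebras as a single $R$-algebra $\Lambda^{\sharp}$. The same argument, with $\sharp$ replaced by $\flat$, produces $\Phi^{\flat}$. The $R$-linearity is automatic: both tilting equivalences are $R$-linear (they are built from $\RHom$ and $\otimes^{\LL}$ of $R$-linear sheaves), and the algebra identification of Corollary~\ref{NCCR alg isom} is an isomorphism of $R$-algebras induced by the $R$-linear functors $(f_{\pm})_{*}$.

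For the statement about line bundle summands, I would argue as follows. For $-2\leq a\leq 3$, the line bundle $\mcO_{X_+}(a)$ is an indecomposable direct summand of $\mcT_+^{\sharp}$ (and of $\mcT_+^{\flat}$), so $\RHom_{X_+}(\mcT_+^{\sharp},\mcO_{X_+}(a))=\Hom_{X_+}(\mcT_+^{\sharp},\mcO_{X_+}(a))$ is the corresponding indecomposable projective $\Lambda^{\sharp}$-module $P_{+,a}$. Passing through the $R$-algebra isomorphism of Corollary~\ref{NCCR alg isom} via $(f_+)_{*}$, this projective is identified with $\Hom_{R}((f_+)_{*}\mcT_+^{\sharp},(f_+)_{*}\mcO_{X_+}(a))$. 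Now Lemma~\ref{compare bundles}(1) combined with the reflexive-equivalence argument in Proposition~\ref{tilting exchange} gives $(f_+)_{*}\mcO_{X_+}(a)\simeq(f_-)_{*}\mcO_{X_-}(-a)$ as reflexive $R$-modules (indeed, these reflexive sheaves agree after restriction to the common open subset $U$). Under the identification of Corollary~\ref{NCCR alg isom}, this $R$-module is identified with the $\Lambda^{\sharp}$-module $\Hom_{X_-}(\mcT_-^{\sharp},\mcO_{X_-}(-a))$, which is the indecomposable projective summand $P_{-,-a}$ of $\Lambda^{\sharp}$ corresponding to $\mcO_{X_-}(-a)$ (here we use that $-3\leq -a\leq 2$, so $\mcO_{X_-}(-a)$ is indeed a summand of $\mcT_-^{\sharp}$). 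Finally, applying $\mcT_-^{\sharp}\otimes^{\LL}_{\Lambda^{\sharp}}-$ sends $P_{-,-a}$ to $\mcO_{X_-}(-a)$.

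The $\flat$-case is completely analogous, using that the same range of line bundles appears as summands in both $\mcT_+^{\flat}$ and $\mcT_-^{\flat}$. There is essentially no obstacle in this proof, because all of the substantive work has been carried out in earlier sections: the tilting property (Theorems~\ref{thm 1st tilting} and \ref{thm 2nd tilting}), the $R$-algebra isomorphism via pushforward (Corollary~\ref{NCCR alg isom}), and most importantly the identification of the relevant reflexive sheaves on $\Spec R$ that comes from the flop comparison in Lemma~\ref{compare bundles}. The one point that requires mild care is to track the indecomposable summand correspondence through the two successive equivalences and to confirm that $-a$ lies in the range $[-3,2]$ exactly when $a\in[-2,3]$, ensuring that the image is a genuine summand of $\mcT_-^{\sharp}$ (resp.~$\mcT_-^{\flat}$) rather than merely a reflexive sheaf.
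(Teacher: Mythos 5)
Your proposal is correct and follows essentially the same route as the paper: compose the tilting equivalence on $X_+$ with the (inverse) tilting equivalence on $X_-$, glued via the $R$-algebra isomorphism of Corollary~\ref{NCCR alg isom}, and then track the line-bundle summands through the identification $(f_+)_*\mcO_{X_+}(a)\simeq(f_-)_*\mcO_{X_-}(-a)$ coming from the restriction to the common open subset. The paper's own proof is merely a citation of Theorems~\ref{thm 1st tilting}, \ref{thm 2nd tilting}, and Corollary~\ref{NCCR alg isom}; your write-up supplies the details (the summand-by-summand matching of projectives and the verification that $-a$ stays in the admissible range) that the paper leaves implicit.
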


\begin{proof}
This theorem is a consequence of Theorem~\ref{thm 1st tilting}, Theorem~\ref{thm 2nd tilting}, and Corollary~\ref{NCCR alg isom}.
\end{proof}

\begin{proof}[Proof of Theorem~\ref{main thm}]
Let $\star \in \{\sharp, \flat\}$.
Then the equivalence $\Phi^{\star}$ satisfies
\begin{align*}
R(f_-)_* \circ \Phi^{\star} &\simeq \RHom_{X_-}(\mcO_{X_-}, \Phi^{\star}(-)) \\
&{}\simeq \RHom_{X_-}(\Phi^{\star}(\mcO_{X_+}), \Phi^{\star}(-)) \\
&{}\simeq \RHom_{X_+}(\mcO_{X_+},-) \\
&{}\simeq R(f_+)_*,
\end{align*}
as desired.
\end{proof}

\begin{rem}
The restriction of $\mcT_{\pm}^{\sharp}$ to $Z_{\pm}^0$ is  isomorphic to the direct sum (with multiplicities) of a full exceptional collections over $Z_{\pm}$, for each $+$ and $-$.
In contrast to tilting bundles for the $G_2^{\dagger}$ type flop constructed in \cite{Hara24},
those exceptional collections are not identical.
They can be visualised as follows. 
The symbol $\ast$ over a bundle indicates that, 
in the construction of a tilting bundle over $X_{\pm}$, 
it will be replaced by another bundle, 
using the $\Ext^1$ described by the dotted arrow.
\[
\begin{tikzpicture}
\draw (-1,0) node[anchor=mid west] {$Z_+$ :};
\draw (0,0) node[anchor=mid west] (a) {$\mcO(-2)$};
\draw ($(a.mid east)+(0.3,0)$) node[anchor= mid west] (b) {$\mcO(-1)$};
\draw ($(b.mid east)+(0.3,0)$) node[anchor=mid west] (c) {$\mcO$};
\draw ($(c.mid east)+(0.3,0)$) node[anchor=mid west] (d) {$\mcS_+^{\vee}$};
\draw ($(d.mid east)+(0.3,0)$) node[anchor=mid west] (e) {$\mcS_+(1)$};
\draw ($(e.mid east)+(0.3,0)$) node[anchor=mid west] (f) {$\mcO(1)$};
\draw ($(f.mid east)+(0.3,0)$) node[anchor=mid west] (g) {$\mcO(2)$};
\draw ($(g.mid east)+(0.3,0)$) node[anchor=mid west] (h) {$\mcO(3)$};
\draw[dashed, ->] (node cs:name=h, anchor=south) to[bend left=20] (node cs:name=e, anchor=south);
\draw[dashed, ->] (node cs:name=d, anchor=south) to[bend left=20] (node cs:name=a, anchor=south);
\draw (node cs:name=e, anchor=north) node{$\ast$};
\draw (node cs:name=d, anchor=north) node{$\ast$};

\draw (-1,-1.5) node[anchor=mid west] {$Z_-$ :};
\draw (0,-1.5) node[anchor=mid west] (A) {$\mcO(-3)$};
\draw ($(A.mid east)+(0.3,0)$) node[anchor= mid west] (B) {$\mcO(-2)$};
\draw ($(B.mid east)+(0.3,0)$) node[anchor=mid west] (C) {$\mcS_-(-1)$};
\draw ($(C.mid east)+(0.3,0)$) node[anchor=mid west] (D) {$\mcO(-1)$};
\draw ($(D.mid east)+(0.3,0)$) node[anchor=mid west] (E) {$\mcO$};
\draw ($(E.mid east)+(0.3,0)$) node[anchor=mid west] (F) {$\mcS_-^{\vee}$};
\draw ($(F.mid east)+(0.3,0)$) node[anchor=mid west] (G) {$\mcO(1)$};
\draw ($(G.mid east)+(0.3,0)$) node[anchor=mid west] (H) {$\mcO(2)$};
\draw[dashed, ->] (node cs:name=F, anchor=south) to[bend left=20] (node cs:name=B, anchor=south);
\draw[dashed, ->] (node cs:name=G, anchor=south) to[bend left=20] (node cs:name=C, anchor=south);
\draw (node cs:name=F, anchor=north) node{$\ast$};
\draw (node cs:name=C, anchor=north) node{$\ast$};
\end{tikzpicture}
\]
Note that, this collection over $Z_-$ gives more $\Ext^1$s over  $X_-$ than those are described by dotted arrows (e.g.~Lemma~\ref{BBW2}). 
Thus it would be little surprising that only two extensions could kill all other extensions and were enough to give a tilting bundle.
\end{rem}

\begin{cor}
Let $R$ be the affine coordinate ring of the affine cone over the polarized manifold $(\OG(3,8), \mcO(1,1))$,
and $\widehat{R}$ the completion of $R$ at the origin.
Then 
\begin{enumerate}
\item[\rm (1)] $\Spec R$ (resp.~$\Spec \widehat{R}$) has exactly two crepant resolutions, and
\item[\rm (2)] $R$ (resp.~$\widehat{R})$ admits an NCCR that is given by a Cohen-Macaulay module and is derived equivalent to both crepant resolutions of $\Spec R$ (resp.~$\Spec \widehat{R}$).
\end{enumerate}
\end{cor}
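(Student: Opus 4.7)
The plan is to deduce both parts as formal consequences of the tilting-bundle construction from Section~\ref{sect: tilting} and the derived equivalences of Section~\ref{sect: D-eq}. For (1), I would simply appeal to the remark following Definition~\ref{def model flop}: because $Z_\pm$ have Picard rank one, the total spaces $X_\pm$ admit no further flopping contractions, and combined with the standard fact that any two $\Q$-factorial terminalizations over the same affine base are connected by a sequence of flops, this forces $X_\pm$ (and similarly $\widehat{X}_\pm$) to be the only $\Q$-factorial terminalizations, which being smooth are the only crepant resolutions.

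For (2), I would take $\mcT \coloneqq \mcT_+^{\sharp}$ (the alternative $\mcT_+^{\flat}$ works identically). Since $\mcO_{X_+}$ is a direct summand of $\mcT$, the proposition on tilting bundles in Section~2.1 yields that $M \coloneqq (f_+)_*\mcT$ is a Cohen-Macaulay, reflexive $R$-module and that $\End_R(M) \simeq \End_{X_+}(\mcT) = \Lambda^{\sharp}$ is an NCCR of $R$ which is automatically given by a Cohen-Macaulay module. By Corollary~\ref{NCCR alg isom} this $R$-algebra is canonically identified with $\End_{X_-}(\mcT_-^{\sharp})$ as well, so the equivalence $\Phi^{\sharp}$ delivers the chain of derived equivalences $\Db(\coh X_+) \simeq \Db(\modu \Lambda^{\sharp}) \simeq \Db(\coh X_-)$, producing the desired derived equivalence between the NCCR and both crepant resolutions. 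For $\widehat{R}$, the completion $\widehat{M} \coloneqq M \otimes_R \widehat{R}$ remains a Cohen-Macaulay, reflexive $\widehat{R}$-module, $\End_{\widehat{R}}(\widehat{M})$ is an NCCR of $\widehat{R}$, and the derived equivalences with both $\widehat{X}_+$ and $\widehat{X}_-$ are inherited by flat base change for $R(f_\pm)_*$ and $\RHom$.

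Since all the substantive work (the Borel-Bott-Weil computations, the identification of push-forwards under the flop in Proposition~\ref{prop exchange}, and the $\Ext$-vanishings via local cohomology) has been completed in earlier sections, I do not foresee any genuine obstacle here. The only minor technical point worth attention is confirming that the tilting property, the Cohen-Macaulay property of $M$, and the isomorphism of endomorphism algebras all survive passage to the completion; this is routine given flatness of $R \to \widehat{R}$ and properness of $f_\pm$.
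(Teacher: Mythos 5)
Your proposal is correct and follows essentially the same route the paper intends: part (1) is just the observation recorded in the remark following Definition~\ref{def model flop} (Picard rank one of $Z_\pm$ forbids further flops), and part (2) reads off the consequences of the proposition on tilting bundles in Section~2.1 (parts (a)--(c)) applied to $\mcT_+^\sharp$, which contains $\mcO_{X_+}$ as a summand, together with Corollary~\ref{NCCR alg isom} and the equivalence $\Phi^\sharp$. Your treatment of the completion via flat base change is the standard argument and is what the paper implicitly relies on.
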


\section{Application to K3 surfaces}

\subsection{Derived factorization categories}
\begin{setup} \label{setup1}
Let $Z$ be a smooth variety, $\mcE$ a locally free sheaf of finite rank, $s \in H^0(Z, \mcE)$ a regular section,
and $V = V(s)$ its vanishing locus.
The section $s$ naturally associates the function $Q_s \colon X \coloneqq \Tot_Z(\mcE^{\vee}) \to \A^1$.
Let the group $\Gm$ act on $Z$ trivially and on $X$ as a scaling of the fiber.
In other words, for the identity character $\chi_{\id} = \id \colon \Gm \to \Gm$, $X$ is identified with $\Tot_Z(\mcE^{\vee}(\chi_{\id}))$.

Let $p \colon \Tot_V(\mcE^{\vee}|_V) \to V$ be the natural projection and $i \colon \Tot_V(\mcE^{\vee}|_V) \hookrightarrow \Tot_Z(\mcE^{\vee})$ the natural inclusion.
\end{setup}
Under this setup, the data $(X, \chi_{\id}, Q_s)^{\Gm}$ is an example of a gauged Landau-Ginzburg (= LG) model,
and one can consider the associated \textit{derived factorization category}
\[ \Dcoh_{\Gm}(X, \chi_{\id}, Q_s), \]
as in \cite{Hirano17Knorrer}, which satisfies the following theorem.

\begin{thm}[{\cite{Isik13, Shipman12,Hirano17Knorrer}}] \label{hirano1}
Under Setup~\ref{setup1}, the functor
\begin{align} 
i_*p^* \colon \Db(\coh V) \xrightarrow{\sim} \Dcoh_{\Gm}(X, \chi_{\id}, Q_s) 
\end{align}
is an equivalence of $\C$-linear triangulated categories.
\end{thm}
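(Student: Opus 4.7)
The plan is to follow the now-standard argument due to Isik and Shipman, with the equivariant refinement of Hirano. The statement is an instance of the general Koszul-duality/Kn\"orrer-periodicity principle that relates zero loci of regular sections to their associated Landau-Ginzburg models, so the proof proceeds by constructing the functor explicitly and then verifying fully-faithfulness and essential surjectivity.

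First, I would unpack the derived factorization category $\Dcoh_{\Gm}(X, \chi_{\id}, Q_s)$: its objects are $\Z/2$-graded pairs of $\Gm$-equivariant coherent sheaves on $X$ joined by odd differentials whose square is multiplication by $Q_s$, with morphisms taken up to null-homotopy and then localized at quasi-isomorphisms. The key geometric point is that $Q_s \colon X \to \A^1$ is precisely the pairing of the tautological fibre-linear function on $X = \Tot_Z(\mcE^{\vee})$ with the section $s$, so $Q_s$ vanishes identically on $\Tot_V(\mcE^{\vee}|_V) \hookrightarrow X$.

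Next I would construct the functor $i_*p^*$ at the level of factorizations. For $\mcF \in \Db(\coh V)$, the sheaf $i_*p^*\mcF$ on $X$ is annihilated by $Q_s$, so it can be regarded canonically as an object of the derived factorization category with vanishing differential; the assignment is functorial and passes to the derived category in the usual way. To compute $\Ext$ groups in the factorization category, I would replace $i_*p^*\mcF$ by the matrix factorization built from the Koszul complex $\Lambda^{\bullet}\mcE^{\vee} \to \mcO_V$ on $Z$, pulled back to $X$: regularity of $s$ ensures exactness of the resolution, and combining exterior multiplication by the tautological section of the pull-back of $\mcE$ with the pull-back of Koszul contraction by $s$ yields a matrix factorization of $Q_s$. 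Passing to $\RHom$ and taking $\Gm$-invariants, the weight decomposition of the total complex collapses to $\RHom_V(\mcF, \mcG)$, giving full faithfulness.

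For essential surjectivity one may either invoke Orlov's theorem identifying $\Dcoh_{\Gm}(X, \chi_{\id}, Q_s)$ with the graded singularity category of the hypersurface $Q_s^{-1}(0)$ and check that the image of $i_*p^*$ contains a generator, or argue directly that line bundles on $V$ pulled back from $Z$ already generate both sides and that $i_*p^*$ sends one system of generators to the other. The main technical obstacle is the Ext-matching step: one must keep careful track of the $\Gm$-grading while collapsing the Koszul resolution into a $\Z/2$-periodic matrix factorization and then verifying that the resulting bicomplex computes $\RHom_V(\mcF, \mcG)$ in each cohomological degree. This is where the regularity hypothesis on $s$ is essential, and where the careful equivariant bookkeeping makes the argument non-trivial.
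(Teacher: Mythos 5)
The paper does not prove Theorem~\ref{hirano1}; it is quoted directly from \cite{Isik13, Shipman12, Hirano17Knorrer}, and no argument is supplied in the text. Your sketch is therefore being compared against those references rather than against the paper itself, and at that level it is a faithful reconstruction: defining the functor by $i_*p^*$ with zero odd differential, replacing $i_*p^*\mcF$ by the Koszul-type matrix factorization of $Q_s$ built from exterior multiplication by the tautological section of $\pi^*\mcE$ and contraction by $\pi^*s$, and reading off $\RHom_V(\mcF,\mcG)$ from the $\Gm$-weight decomposition is exactly the Isik--Shipman strategy, with Hirano's contribution being the extension from the affine/projective-over-affine setting to the general situation needed here.

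One point deserves more care. You offer Orlov's theorem (identifying $\Dcoh_{\Gm}(X, \chi_{\id}, Q_s)$ with a graded singularity category) as one route to essential surjectivity, but Orlov's equivalence in its classical form is a statement about (graded) affine hypersurfaces, and the hypothesis in Setup~\ref{setup1} is only that $Z$ is a smooth variety and $\mcE$ is an arbitrary bundle of finite rank; no affineness is assumed. Handling this generality is precisely the content of \cite{Hirano17Knorrer}, which proceeds by a different mechanism (reduction/gluing and comparison with the relative singularity category) rather than a direct appeal to Orlov. Your alternative generator-matching argument is the one that actually scales to this setting, so in a write-up it should be promoted from a fallback option to the main line of the essential surjectivity proof. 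Your identification of the $\Gm$-grading bookkeeping in the Koszul collapse as the chief technical obstacle is accurate, and the regularity of $s$ is indeed used exactly where you say it is, to guarantee exactness of the Koszul resolution of $\mcO_V$.
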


\begin{setup} \label{setup2}
Following Setup~\ref{setup1},
put $S \coloneqq H^0(X, \mcO_X)$, and let $f \colon X \to \Spec S$ be the affinization morphism.
Note that $S$ naturally admits a $\Gm$-action, and $f$ is equivariant with respect to those actions.
Let $Q \colon \Spec S \to \A^1$ be the function such that $Q_s = f \circ Q$.

Assume that $f$ is projective, and $\mcT$ is a tilting bundle over $X$ that is $\Gm$-equivariant as a sheaf over $X$.
Then the endomorphism algebra $\Lambda \coloneqq \End_X(\mcT)$ is a module-finite $S$-algebra admitting a $\Gm$-equivariant structure.
\end{setup}

Under this setup, one can consider the derived factorization category
\[ \Dmod_{\Gm}(\Lambda, \chi_{\id}, Q) \]
for the noncommutative gauged LG model $(\Lambda, \chi_{\id}, Q)^{\Gm}$ (c.f.~\cite{Hirano21}).
Then the following holds.

\begin{thm}[{\cite{Hirano21}}] \label{hirano2}
Under Setup~\ref{setup2}, the functor
\begin{align} 
\RHom(\mcT, -) \colon \Dcoh_{\Gm}(X, \chi_{\id}, Q_s) \xrightarrow{\sim}  \Dmod_{\Gm}(\Lambda, \chi_{\id}, Q).
\end{align}
is an equivalence of categories.
\end{thm}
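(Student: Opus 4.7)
The plan is to lift the classical tilting equivalence $\Db(\coh X) \simeq \Db(\modu \Lambda)$ attached to $\mcT$ to the setting of $\Gm$-equivariant curved dg categories. Both categories appearing in the statement admit curved dg enhancements: objects of $\Dcoh_{\Gm}(X, \chi_{\id}, Q_s)$ are modelled by pairs $(\mcF, \partial_{\mcF})$, where $\mcF$ is a $\Gm$-equivariant coherent sheaf on $X$ and $\partial_{\mcF}$ is an odd endomorphism of weight $\chi_{\id}$ with $\partial_{\mcF}^2 = Q_s \cdot \id_{\mcF}$, and similarly for $\Dmod_{\Gm}(\Lambda, \chi_{\id}, Q)$. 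Because $\mcT$ is $\Gm$-equivariant, the functor $\RHom_X(\mcT, -)$ extends componentwise to send such a factorization to the pair $\bigl(\RHom_X(\mcT,\mcF),\, \RHom_X(\mcT, \partial_{\mcF})\bigr)$, whose induced odd endomorphism squares to the action of $Q$ on $\Lambda$-modules via the identification $S \simeq H^0(X,\mcO_X)$ and the relation $Q_s = f^{*}Q$.

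Given this setup, I would prove the equivalence in two steps. First, \textbf{full faithfulness} reduces to checking that the endomorphism curved dg algebra of the trivial factorization $(\mcT, 0)$ inside the factorization category is quasi-isomorphic to $(\Lambda, Q)$. The pretilting condition $\Ext^{>0}_{X}(\mcT,\mcT) = 0$ forces concentration in a single cohomological degree, while the $\Gm$-equivariance of $\mcT$ identifies the induced curvature with $Q$ under $\End_X(\mcT) \simeq \Lambda$. Second, \textbf{essential surjectivity} follows once $(\mcT,0)$ is shown to be a compact generator of $\Dcoh_{\Gm}(X, \chi_{\id}, Q_s)$. Compactness is inherited from the classical tilting setting, and generation can be obtained by a devissage on the potential: any coherent factorization is filtered in a way that reduces generation to the underlying generation statement for $\Db(\coh X)$, which is given by the tilting hypothesis, so the $\Gm$-twists of $(\mcT,0)$ fill out the factorization category.

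The principal obstacle is the careful curved dg bookkeeping. One must verify that the $\Gm$-equivariant structure on $\mcT$ and the weight of $Q_s$ cooperate so that $\RHom_X(\mcT,-)$ lands in the factorization category with curvature exactly $Q$, rather than some twisted variant, and that the coherence / finite-generation conditions are preserved on both sides. Here the projectivity of $f \colon X \to \Spec S$ together with the tilting property guarantees that $\RHom_X(\mcT, \mcF)$ is a bounded complex of finitely generated $\Lambda$-modules whenever $\mcF$ is coherent, so the functor restricts to the asserted subcategories. An alternative and perhaps cleaner route would be to invoke a general Morita-type theorem for curved dg categories applied to the dg enhancement of $\Db(\coh X)$ furnished by $\mcT$; this would reduce the statement to the classical assertion that $\mcT$ is a compact tilting object with endomorphism algebra $\Lambda$, which is hypothesized in Setup~\ref{setup2}.
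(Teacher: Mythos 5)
The paper does not prove this statement; it is imported directly from \cite{Hirano21}, and the remark immediately after the theorem refers the reader to Section~4 of that reference for the construction of the functor $\RHom(\mcT,-)$. So there is no "paper proof" to compare against — any evaluation has to be on the internal soundness of your sketch.

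Your outline has a genuine gap at its central step. You propose to establish full faithfulness by computing "the endomorphism curved dg algebra of the trivial factorization $(\mcT,0)$ inside the factorization category." But $(\mcT,0)$ is not an object of $\Dcoh_{\Gm}(X,\chi_{\id},Q_s)$: a factorization of $Q_s$ requires an odd endomorphism $\partial$ with $\partial^2 = Q_s\cdot\id$, so the pair $(\mcT,0)$ only qualifies when $Q_s = 0$. In the setup at hand $Q_s$ is the nonzero potential coming from a regular section, so $\mcT$ does not corepresent anything in the factorization category, and the intended reduction of full faithfulness to a single endomorphism computation does not parse. The functor $\RHom_X(\mcT,-)$ is an \emph{external} functor applied componentwise to factorizations; the correct strategy (and what Hirano actually does) is to verify that this componentwise functor descends to the Verdier quotient defining the derived factorization category — i.e.\ that it sends acyclic factorizations to acyclic factorizations — and then produce a quasi-inverse, with the tilting property of $\mcT$ in the ordinary derived category of $X$ entering precisely to control the acyclicity and generation issues. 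Your generation argument ("devissage on the potential") is also too vague to assess: the derived factorization category is not a bounded derived category and in particular carries no cohomological grading, so the filtration you gesture at needs to be spelled out before one can decide whether generation by $\mcT$ transfers. Finally, note that Remark~4.4(2) in the paper explicitly records that the tilting bundles in question are \emph{not} pull-backs from the base $Z$ (so the Orlov--Teleman-type statement of \cite{OT21} does not directly apply), which means any route that implicitly assumes the tilting object descends to a factorization over the base is off the table.

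If you want to pursue this, the cleaner path is the one hinted at in your last sentence: invoke the curved dg Morita theory developed in \cite{Hirano21} directly, rather than trying to re-derive it via a hypothetical representing object $(\mcT,0)$.
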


\begin{rem}
\begin{enumerate}
\item[(1)] For the construction of the functor $\RHom(\mcT,-)$, see \cite[Section~4]{Hirano21}.
\item[(2)] The main result of \cite{OT21} is relevant to the result above, but it assumes one extra condition that the tilting bundle $\mcT$ is obtained by the pull-back of a bundle over the base $Z$.
Our tilting bundles in Theorem~\ref{thm 1st tilting} and \ref{thm 2nd tilting} do not satisfy this assumption. 
\end{enumerate}
\end{rem}

\subsection{Application to the canonical local models}

Let $(Z_+, \mcE_+)$ and $(Z_-, \mcE_-)$ be simple Mukai pairs such that give two distinct projective space bundle structures of the roof $W \simeq \P_{Z_+}(\mcE_+) \simeq \P_{Z_-}(\mcE_-)$,
and
\[ \begin{tikzcd} 
X_+ \coloneqq \Tot(\mcE_+^{\vee}) \arrow[dr, "f_+"] \arrow[rr, dashrightarrow] & & X_- \coloneqq \Tot(\mcE_-^{\vee}) \arrow[dl,"f_-"'] \\
& \Spec R &
\end{tikzcd} \]
the associated canonical local model of the simple flop of the corresponding type.
Consider the actions of $\Gm$ on $X_{\pm}$ and $\Spec R$ as in Setup~\ref{setup1} and \ref{setup2}.
The images of the open embeddings $j_{\pm} \colon U \to X_{\pm}$ are $\Gm$-invariant 
and hence the common open subset $U$ also admits a $\Gm$-action.

\begin{thm} \label{Hirano equiv for CY}
Assume that there are $\Gm$-equivariant tilting bundles $\mcT_{\pm}$ over $X_{\pm}$
such that $j_+^*\mcT_+ \simeq j_-^*\mcT_-$ as $\Gm$-equivariant bundles.
Let $s_+ \in H^0(\mcE_+)$ be a section, and $s_- \in H^0(\mcE_-)$ the corresponding section under the identification $H^0(\mcE_+) \simeq H^0(\mcE_-)$.
If $s_{\pm}$ are regular, then there is an exact equivalence of categories
\[ \Db(\coh V(s_+)) \simeq \Db(\coh V(s_-)). \]
\end{thm}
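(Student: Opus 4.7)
The plan is to combine the Kn\"orrer-type equivalences of Theorems~\ref{hirano1} and~\ref{hirano2} with an equivariant version of Corollary~\ref{NCCR alg isom} to descend the derived equivalence from the total spaces $X_\pm$ to the Calabi-Yau vanishing loci $V(s_\pm) \subset Z_\pm$.

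First I would apply Theorem~\ref{hirano1} on each side to obtain equivalences
\[ \Db(\coh V(s_\pm)) \simeq \Dcoh_{\Gm}(X_\pm, \chi_{\id}, Q_{s_\pm}). \]
Since $\mcT_\pm$ is a $\Gm$-equivariant tilting bundle, Theorem~\ref{hirano2} further produces equivalences
\[ \Dcoh_{\Gm}(X_\pm, \chi_{\id}, Q_{s_\pm}) \simeq \Dmod_{\Gm}(\Lambda_\pm, \chi_{\id}, Q), \]
where $\Lambda_\pm \coloneqq \End_{X_\pm}(\mcT_\pm)$. The same superpotential $Q \colon \Spec R \to \A^1$ appears on both sides because $s_+$ and $s_-$ correspond under the canonical identification $H^0(\mcE_+) \simeq H^0(\mcE_-)$ coming from the roof, and the fiber-linear function $Q_{s_\pm}$ on $X_\pm$ factors through the affinization $f_\pm \colon X_\pm \to \Spec R$, giving a single function on $\Spec R$ independent of the choice of side.

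The remaining task is to produce an isomorphism $\Lambda_+ \simeq \Lambda_-$ of $\Gm$-equivariant $R$-algebras compatible with $Q$, i.e.\ an equivariant version of Corollary~\ref{NCCR alg isom}. I would obtain this by adapting the proof of Proposition~\ref{tilting exchange}: both flopping contractions $f_\pm$ are $\Gm$-equivariant and isomorphic in codimension one, so the push-forwards $(f_\pm)_*$ restrict to equivalences between categories of $\Gm$-equivariant reflexive sheaves on $X_\pm$ and on $\Spec R$, and these fit into the same commutative diagram with the restriction to the $\Gm$-invariant common open subset $U$. The hypothesis $j_+^*\mcT_+ \simeq j_-^*\mcT_-$ as $\Gm$-equivariant bundles then yields $(f_+)_*\mcT_+ \simeq (f_-)_*\mcT_-$ as $\Gm$-equivariant $R$-modules, and applying $\End_R(-)$ gives the desired $\Gm$-equivariant $R$-algebra isomorphism.

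The main obstacle I anticipate is careful bookkeeping of the $\Gm$-equivariance and compatibility with the potential throughout the chain of equivalences: one must check that every functor involved respects the $\Gm$-actions and that the identification $\Lambda_+ \simeq \Lambda_-$ intertwines the two copies of $Q$ on $\Spec R$. Once this is in hand, the induced equivalence $\Dmod_{\Gm}(\Lambda_+, \chi_{\id}, Q) \simeq \Dmod_{\Gm}(\Lambda_-, \chi_{\id}, Q)$ composes with the earlier equivalences to give $\Db(\coh V(s_+)) \simeq \Db(\coh V(s_-))$, as required.
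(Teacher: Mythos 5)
Your proposal is correct and follows essentially the same route as the paper's proof: Knörrer periodicity (Theorem~\ref{hirano1}), the tilting-bundle equivalence (Theorem~\ref{hirano2}), and then a $\Gm$-equivariant upgrade of the algebra isomorphism from Corollary~\ref{NCCR alg isom} deduced from $j_+^*\mcT_+ \simeq j_-^*\mcT_-$. The paper simply states the equivariant isomorphism $\Lambda_+ \simeq \Lambda_-$ as an immediate consequence of the hypothesis, while you spell out the reduction to reflexive sheaves over the common open subset; this is the same argument written in more detail.
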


\begin{proof}
Let $Q_{\pm} \colon X_{\pm} \to \A^1$ and $Q \colon \Spec R \to \A^1$ be the corresponding functions.
Put $\Lambda_{\pm} \coloneqq \End_{X_{\pm}}(\mcT_{\pm})$.
Then Theorem~\ref{hirano1} and \ref{hirano2} show that there are natural equivalences
\[ \Db(\coh V(s_{\pm})) \simeq \Dcoh_{\Gm}(X_{\pm}, \chi_{\id}, Q_{\pm}) \simeq \Dmod_{\Gm}(\Lambda_{\pm}, \chi_{\id}, Q) \]
for each $+$ and $-$.
Then the assumption yields a $\Gm$-equivariant isomorphism $\Lambda_+ \simeq \Lambda_-$ of $R$-algebras,
and hence the composite of all the above equivalences gives the desired equivalence $\Db(\coh V(s_+)) \simeq \Db(\coh V(s_-))$.
\end{proof}

\subsection{K3 surfaces of degree $12$ and the roof of type $D_4$}

Let us consider the canonical local model of the simple flop of type $D_4$.
The projection $\pi \colon \wX \to W = \OG(3,8)$ is $\Gm$-equivariant, where $\Gm$ acts on $\wX$ by the fiber-wise scaling
and on $W$ trivially.
The birational contractions $\phi_{\pm} \colon \wX \to X_{\pm}$ are also $\Gm$-equivariant.

\begin{prop} \label{roof gives tilting}
For $\star \in \{\sharp, \flat\}$, there exist $\Gm$-equivariant structures on $\mcT_+^{\star}$ and $\mcT_-^{\star}$
such that $j_+^*\mcT_+^{\star} \simeq j_-^*\mcT_-^{\star}$ as $\Gm$-equivariant bundles.
\end{prop}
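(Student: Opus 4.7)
The plan is to realise every summand of $\mcT_{\pm}^{\star}$ as the derived pushforward of a natural $\Gm$-equivariant sheaf on the common resolution $\wX$, and then to deduce the comparison over $U$ by equivariant base change along the $\Gm$-equivariant open immersion $\widetilde{j}\colon U \to \wX$ satisfying $\phi_{\pm} \circ \widetilde{j} = j_{\pm}$.

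First I would equip $\wX = \Tot_W(\mcO_W(-1,-1))$ with the fiberwise scaling $\Gm$-action, under which $W$ is the $\Gm$-fixed zero-section, the exceptional divisor $E$ is $\Gm$-invariant, and the blow-downs $\phi_{\pm}\colon \wX \to X_{\pm}$ are $\Gm$-equivariant. Under this setup the line bundles $\mcO_{\wX}(a,b)$ and $\mcO_{\wX}(E)$, together with the pullback $\mcE_{\wX}$ of the key bundle from $W$, acquire canonical $\Gm$-equivariant structures. Since $R(\phi_{\pm})_*$ preserves $\Gm$-equivariance, Proposition~\ref{prop exchange} then upgrades $\mcP_{\pm}$, $\mcP_{\pm}^{\vee}(-1)$, and $\mcP_{\pm}^{\vee}(1)$ to $\Gm$-equivariant bundles on $X_{\pm}$; combined with the tautologically equivariant line bundles $\mcO_{X_{\pm}}(a)$ pulled back from $Z_{\pm}$, this endows every direct summand of $\mcT_{\pm}^{\sharp}$ and $\mcT_{\pm}^{\flat}$ with a $\Gm$-equivariant structure.

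Finally, equivariant base change along $\widetilde{j}$, together with the fact that $\phi_{\pm}$ restricts to an isomorphism $\widetilde{j}(U) \xrightarrow{\sim} j_{\pm}(U)$, gives a canonical $\Gm$-equivariant isomorphism $j_{\pm}^* R(\phi_{\pm})_*\mcF \simeq \widetilde{j}^*\mcF$ for every $\Gm$-equivariant sheaf $\mcF$ on $\wX$. Applying this to $\mcE_{\wX}(E)$, $\mcE_{\wX}^{\vee}(-1,0)$, $\mcE_{\wX}^{\vee}(0,-1)$, and the line bundles $\mcO_{\wX}(a,b)$, and then identifying summands of $\mcT_+^{\star}$ with those of $\mcT_-^{\star}$ term by term exactly as dictated by Lemma~\ref{compare bundles}, yields the desired $\Gm$-equivariant isomorphism $j_+^*\mcT_+^{\star} \simeq j_-^*\mcT_-^{\star}$ for both $\star \in \{\sharp, \flat\}$. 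The only subtle point I expect is the bookkeeping needed to absorb the character twist coming from the tautological section of $\mcO_{\wX}(E)$, which has $\Gm$-weight $\pm 1$: the naive equivariant identifications $j_+^*\mcO_{X_+}(a) \simeq j_-^*\mcO_{X_-}(-a)$ and $j_+^*\mcP_+^{(\vee)}(a) \simeq j_-^*\mcP_-^{(\vee)}(-a)$ will hold only after this twist is absorbed uniformly into the choice of equivariant structures, but since the same twist appears on both sides it presents no genuine obstacle.
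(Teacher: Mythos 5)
Your proposal is correct and follows essentially the same strategy as the paper: realise the summands of $\mcT_{\pm}^{\star}$ as (derived) pushforwards along $\phi_{\pm}$ of $\Gm$-equivariant sheaves on $\wX$, then use the open immersion $\widetilde{j}$ with $\phi_{\pm}\circ\widetilde{j}=j_{\pm}$ to compare restrictions to $U$. The paper sidesteps the character-twist bookkeeping you flag at the end by packaging everything as a single bundle $\mcG^{\star}$ on the roof $W$ (e.g.\ $\mcG^{\sharp}$ contains $\mcE(-1,-1)$ and $\mcE^{\vee}(0,-1)$ rather than $\mcE_{\wX}(E)$) and pulling back along $\pi\colon\wX\to W$: since $\Gm$ acts trivially on $W$, the equivariant structure on $\pi^*\mcG^{\star}$ is canonical, and one checks via Lemma~\ref{push of exc div} and Proposition~\ref{prop exchange} that $(\phi_{\pm})_*\pi^*\mcG^{\star}\simeq\mcT_{\pm}^{\star}$ (up to a uniform twist by $\mcO_{X_{\pm}}(\pm 1)$ in the $\flat$ case), so the twist you worry about never enters.
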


\begin{proof}
Consider the bundle
\[ \mcG^{\sharp} = \mcO(-2,0) \oplus \mcO(-1,0) \oplus \mcO \oplus \mcE(-1,-1) \oplus \mcE^{\vee}(0,-1) \oplus \mcO(0,-1) \oplus \mcO(0,-2) \oplus \mcO(0,-3), \]
over the roof $W = \OG(3,8)$, where $\mcE$ is the key bundle.
Note that
\begin{align*}
(\phi_+)_*\mcO_{\wX}(-k,0) &\simeq \mcO_{X_+}(-k), \\
(\phi_-)_*\mcO_{\wX}(-k,0) &\simeq (\phi_-)_*\left( \mcO_{\wX}(0,k) \otimes O_{\wX}(kE) \right) \simeq \mcO_{X_-}(k)  \\
(\phi_+)_*\mcO_{\wX}(0,-k) &\simeq (\phi_+)_* \left( \mcO_{\wX}(k,0) \otimes O_{\wX}(kE) \right) \simeq \mcO_{X_+}(k), ~\text{and} \\
(\phi_-)_*\mcO_{\wX}(0,-k) &\simeq \mcO_{X_-}(-k)
\end{align*}
for all $0 \leq k \leq 3$ by Lemma~\ref{push of exc div}.
Thus together with Proposition~\ref{prop exchange}, there exists an isomorphism
$(\phi_{\pm})_*\pi^*\mcG^{\star} \simeq \mcT_{\pm}^{\star}$ for each $+$ and $-$.
Since $\Gm$ trivially action on $W = \OG(3,8)$ and $\pi \colon \wX \to W$ is $\Gm$-equivariant,
the pull-back $\pi^*\mcG^{\sharp}$ admits a natural $\Gm$-equivariant structure.
Thus tilting bundles $\mcT_{\pm}^{\sharp} \simeq (\phi_{\pm})_*\pi^*\mcG^{\sharp}$ also admit $\Gm$-equivariant structures such that $j_+^*\mcT_+^{\sharp} \simeq j_-^*\mcT_-^{\sharp}$ as $\Gm$-equivariant bundles.

Similarly,  put
\[ \mcG^{\flat} = \mcO(-3,0) \oplus \mcO(-2,0) \oplus \mcE^{\vee}(-1,0) \oplus \mcO(-1,0) \oplus \mcO \oplus \mcE(-1,-1) \oplus \mcO(0,-1) \oplus \mcO(0,-2). \]
Then
\[ \text{$(\phi_+)_*\pi^*\mcG^{\flat} \otimes \mcO_{X_+}(1) \simeq \mcT^{\flat}_+$ and 
$(\phi_-)_*\pi^*\mcG^{\flat} \otimes \mcO_{X_-}(-1) \simeq \mcT^{\flat}_-$.} \]
Thus, as before,
tilting bundles $\mcT_{\pm}^{\flat}$ admit $\Gm$-equivariant structures 
such that $j_+^*\mcT_+^{\flat} \simeq j_-^*\mcT_-^{\flat}$ as $\Gm$-equivariant bundles.
\end{proof}

\begin{proof}[Proof of Corollary~\ref{main cor}]
This is a consequence of Corollary~\ref{roof gives tilting} and Theorem~\ref{Hirano equiv for CY}.
\end{proof}

\begin{rem} \label{remark for K3}
The following are known for the (smooth) K3 surfaces that appear as the zero-locus of a regular section $s_+ \in H^0(\mcS_+(2))$.
\begin{enumerate}
\item[$\bullet$] By \cite{Mukai88} and \cite[Proposition~3.4]{IMOU20}, a general K3 surface of degree $12$ appears as the zero-locus of a section $s_+ \in H^0(\mcS(2))$.
\item[$\bullet$] Bby \cite[Corollary~4.10]{KR22}, if $s_+$ is very general, then $V(s_+)$ and $V(s_-)$ are non-isomorphic pair of K3 surfaces.
\end{enumerate}
The derived equivalence between $V(s_+)$ and $V(s_-)$ was first proved by \cite[Proposition~3.4]{IMOU20} using projective duality, 
and then alternative proofs were given by \cite[Corollary~3.3]{KR22} using lattice theory and 
by \cite[Section~6.2.1]{Xie24} using mutations of semiorthogonal decompositions.
Note that, in contrast to Corollary~\ref{main cor}, the proofs in \cite{KR22, Xie24} applies only to smooth K3 surfaces.

On the other hand, in \cite{IMOU20}, the derived equivalence is proved using a moduli-theoretic description of $V(s_-)$. This approach also applies only in the smooth case. 
However, the proofs of the key propositions \cite[Propositions~3.4 and 4.1]{IMOU20} remain valid for arbitrary regular sections $s_{\pm} \in H^0(\mcS_{\pm}(2))$. 
Hence, together with the homological projective duality theorem in \cite[Section~6.2]{Kuz06}, 
one can deduce a derived equivalence
\[
\Db(\coh V(s_+)) \simeq \Db(\coh V(s_-))
\]
even in the presence of singularities.
This derived equivalence, however, factors through several non-canonical identifications, 
such as an isomorphism 
\[
V(s_+) \simeq \mathrm{OG}_-(5,10) \cap L
\]
for some $7$-dimensional linear subspace $L \subset \mathbb{P}^{15}$.

In this sense, our construction of the equivalences in Corollary~\ref{main cor} provides a more canonical and uniform treatment for the family 
\[
\left\{ \left(V(s_+), V(s_-) \right) \mid \text{$s_{\pm} \in H^0(\mathcal{S}_{\pm}(2))$ are regular} \right\}
\]
of pairs of (possibly singular) K3 surfaces.
\end{rem}

\begin{rem}
The same statement as in Proposition~\ref{roof gives tilting} holds for tilting bundles constructed in \cite{Hara17, Seg16, Hara22, Hara21, Hara24, DHKR25}.
In particular, a similar statement as in Corollary~\ref{main cor} holds for the canonical local models of types $A_4^G$, $C_2$, $G_2$, and $G_2^{\dagger}$.

As explained in Remark~\ref{rem roof K3}, the roofs with which K3 surfaces are relevant are exactly of type $D_4$ or $G_2^{\dagger}$.
The corresponding simple Mukai pairs are $(\Q^6, \mcS(2))$ and $(\Q^5, \mcG(1))$, respectively, 
where $\mcS$ is a spinor bundle and $\mcG$ is an Ottaviani bundle.
In both cases, the degree of K3 surfaces is $12$.
However, in contrast to the spinor bundle, Ottaviani bundles over $\Q^5$ have moduli.
This makes the study of the type $G_2^{\dagger}$ case more involved than the type $D_4$ case.
See \cite{KR22}.
\end{rem}

\appendix
\section{Simple flops and derived equivalences} \label{appendix}

This appendix summarises the existing progresses of the study of derived categories for (known) simple flops in Table~\ref{table:simple flop} below.
Note that simple flops have not been classified yet, and non-homogeneous examples such as type $G_2^{\dagger}$ flop may exist more.
The complete classification of simple flops is an important open problem from the perspectives of birational geometry and the studies of Fano manifolds and projective Calabi-Yau manifolds, besides the theory of derived categories.
In the table, each row shows the following information and references.
\begin{enumerate}
\item[$\bullet$] Dim of flop: the dimension of the canonical local model of the simple flop.
\item[$\bullet$] Dim of CY: the dimension of the associated projective Calabi-Yau manifolds. Here we put $\dim \emptyset = -1$.
\item[$\bullet$] D-equivalence: the first paper that proves the derived equivalence for that type of the simple flop.
\item[$\bullet$] FM kernel: the paper that constructs the explicit Fourier-Mukai kernel.
\item[$\bullet$] Tilting equiv: the paper that constructs tilting bundles for the canonical local models and applies those bundles to prove the derived equivalence.
\end{enumerate}

The following provides several comments on the table.

\begin{enumerate}
\item[(1)] Note that the existence of a Fourier-Mukai kernel follows from the general theory of dg categories once a derived equivalence is established in reasonable ways such as the one using tilting bundles or the one using mutations of semiorthogonal decompositions.
However, it is often difficult to give an explicit description of such a kernel.
In all the cases where an explicit Fourier-Mukai kernel is known (namely, of types $A_n \times A_n$, $A_n^M$, and $C_2$), that kernel is given by the structure sheaf of the fiber product $X_+ \times_{\Spec R} X_-$.
It would be interesting to ask if the same construction can give a Fourier-Mukai kernel for other types of simple flops.
\item[(2)] Although it is not included in the table, the derived equivalence for the type $D_5$ flop has already been addressed by \cite{RX24}.
However, an explicit description of a Fourier-Mukai kernel and the construction of an equivalence via tilting bundles for this type remain open problems.
\item[(3)] As the first proof of the derived equivalence for the simple flop of type $C_2$, the work of Morimura \cite{Morimura22} is also listed with the original work by Segal \cite{Seg16}, since Morimura's proof that is based on semiorthogonal decompositions and their mutations can be applied in more general settings beyond the canonical local model.
\end{enumerate}

\begin{landscape}
\begin{table}[h]
  \centering
  \begingroup
  \renewcommand{\arraystretch}{1.5}

  \begin{tabular}{|c|c||c|c|c|c|c|c|c|c|} \hline
  \multicolumn{2}{|c||}{Classification} & $A_n \times A_n$ & $A_n^M$ & $A^G_n$ & $C_n$ & $D_n$ & $F_4$ & $G_2$ & $G_2^{\dagger}$ \\ \hline
  
  \multicolumn{2}{|c||}{Condition for $n$} & $n \geq 1$ & $n \geq 2$  & $n$ is even, $n \geq 4$ & $n \equiv 2~\mod~3$ & $n \geq 4$ & \multicolumn{3}{c}{} \\ \hhline{=======~~~}

  \multirow{6}{*}{Smallest rank} & Type & $A_1 \times A_1$ & $A_2^M$ & $A_4^G$ & $C_2$ & $D_4$ & \multicolumn{3}{c}{}    \\ \cline{2-7}
  & Dim of flop & $3$ & $4$ & $9$ & $5$ & $10$ & \multicolumn{3}{c}{}    \\ \cline{2-7}
  & Dim of CY & $-1$ & $0$ & $3$ & $1$ & $2$ & \multicolumn{3}{c}{}    \\ \hhline{|~|======|~~~}
  & D-equivalence & \multicolumn{2}{c|}{\multirow{3}{*}{Same as below}} & \cite{Morimura22} & \cite{Seg16, Morimura22} & \cite{Xie24} & \multicolumn{3}{c}{}  \\ \cline{2-2} \cline{5-7} 
  & FM kernel & \multicolumn{2}{c|}{} & ? & \cite{Hara22} & ? & \multicolumn{3}{c}{}  \\ \cline{2-2} \cline{5-7} 
  & Tilting equiv & \multicolumn{2}{c|}{} & \cite{DHKR25} & \cite{Seg16, Hara22} & This article & \multicolumn{3}{c}{}  \\ \hline\hline

  \multirow{5}{*}{General rank} & Dim of flop & $2n+1$ & $2n$ & $(n+2)^2/4$ & $(2n^2+3n+1)/3$ & $(n^2+n)/2$ & $23$ & $7$ & $8$  \\ \cline{2-10}
  & Dim of CY & $-1$ & $0$ & $(n^2-4)/4$ & $(2n^2-n-3)/3$ & $(n^2-3n)/2$ & $17$ & $3$ & $2$  \\ \hhline{|~|=========|}
  & D-equivalence & \multirow{2}{*}{\cite{BO02}} & \multirow{2}{*}{\cite{Kaw02, Nam03}} & \multirow{3}{*}{?} & \multirow{3}{*}{?} & \multirow{3}{*}{?} & \multirow{3}{*}{?} & \cite{Ueda19} & \cite{Hara24, Xie24}  \\ \cline{2-2} \cline{9-10}
  & FM kernel & & &  &  &  & & ? & ?  \\ \cline{2-4} \cline{9-10}
  & Tilting equiv & \multicolumn{2}{c|}{well-known (c.f.~\cite{Hara17})} &  &  &  &  & \cite{Hara21} & \cite{Hara24}  \\ \hline
  \end{tabular}

  \endgroup
\vspace{.5em}
  \caption{Simple flops and derived equivalences}
  \label{table:simple flop}
\end{table}
\end{landscape}

\bibliography{flop}
\bibliographystyle{amsalpha}

\end{document}